\newcommand{\revise}[1]{{\color{black}#1}}
\numberwithin{equation}{section}
\newtheorem{Lemma}[theorem]{Lemma}
\newtheorem{Proposition}[theorem]{Proposition}
\newtheorem{Assumption}[theorem]{Assumption}
\newcommand{\ba}{\begin{array}}
\newcommand{\ea}{\end{array}}
\newcommand{\bit}{\begin{itemize}}
\newcommand{\eit}{\end{itemize}}
\newcommand{\be}{\begin{equation}}
\newcommand{\ee}{\end{equation}}
\newcommand{\bea}{\begin{eqnarray}}
\newcommand{\eea}{\end{eqnarray}}
\newcommand{\st}{\mathrm{s.t.}}
\newcommand{\argmin}{\mathop{\mathrm{arg\,min}}}
\newcommand{\diag}{\text{diag}}
\newcommand{\Rmn}[1]{\uppercase\expandafter{\romannumeral#1}}
\numberwithin{equation}{section}
\newcommand{\R}{\mathbb{R}}
\numberwithin{theorem}{section}
\newcommand{\iprod}[2]{\left \langle #1, #2 \right \rangle }
\title{A projected semismooth Newton method for a class of nonconvex composite programs \revise{with strong prox-regularity}
}
\author{Jiang Hu\thanks{Massachusetts General Hospital and Harvard Medical School, Harvard University, Boston, MA 02114
(\email{hujiangopt@gmail.com}).}
  \and  Kangkang Deng\thanks{Corresponding author. Beijing International Center for Mathematical Research, Peking University, Beijing 100871,
China (\email{freedeng1208@gmail.com}).}
 \and Jiayuan Wu\thanks{College of Engineering, Peking University, Beijing, China
	(\email{1901110043@pku.edu.cn}).}
 \and Quanzheng Li\thanks{Massachusetts General Hospital and Harvard Medical School, Harvard University, Boston, MA 02114
(\email{li.quanzheng@mgh.harvard.edu}).}
}
\begin{document}

\maketitle

\begin{abstract}
This paper aims to develop a Newton-type method to solve a class of nonconvex composite programs. In particular, the nonsmooth part is possibly nonconvex. To tackle the nonconvexity, we develop a notion of strong prox-regularity which is related to the singleton property, Lipschitz continuity, and monotonicity of the associated proximal operator, and we verify it in various classes of functions, including weakly convex functions, indicator functions of proximally smooth sets, \revise{and two specific sphere-related nonconvex nonsmooth functions. In this case, the problem class we are concerned with covers smooth optimization problems on manifold and certain composite optimization problems on manifold. For the latter, the proposed algorithm is the first second-order type method.} Combining with the semismoothness of the proximal operator, we design a projected semismooth Newton method to find a root of the natural residual induced by the proximal gradient method. Since the corresponding natural residual may not be globally monotone,  an extra projection is added on the usual semismooth Newton step and  new criteria 
are proposed for the switching between the projected semismooth Newton step and the proximal step. The global convergence is then established under the strong prox-regularity. Based on the BD regularity condition, we establish local superlinear convergence.
Numerical experiments demonstrate the effectiveness of our proposed method compared with state-of-the-art ones. 
\end{abstract}
\begin{keywords}
nonconvex composite optimization, strong prox-regularity, projected semismooth Newton method, superlinear convergence
\end{keywords}

\begin{AMS}
  90C06, 90C22, 90C26, 90C56
\end{AMS}

\section{Introduction}
The nonconvex composite minimization problem has attracted lots of attention in signal processing, statistics, and machine learning. The formulation we are concerned with is:
\be \label{prob} \min_{x \in \R^{n}} \;\; \varphi(x) := f(x) + h(x), \ee
where $f: \R^{n} \rightarrow \R$ is twice differentiable and possibly nonconvex, $h: \R^n \rightarrow (-\infty, \infty]$ is a proper, closed, and extended-value function.  Note that $h$ can be nonsmooth and nonconvex. In this paper, we consider a class of nonsmooth and nonconvex functions $h$ satisfying the following strong prox-regularity.
\begin{definition}[strong prox-regularity] \label{assum:h} \revise{We call a proper, closed, and extended function $h:\R^n \rightarrow \bar{\R}$ is strongly prox-regular if the} proximal operator ${\rm prox}_{th}(x):= \argmin_{u} th(u) + \frac{1}{2}\|x - u\|^2$ is single-valued, Lipschitz continuous, and monotone over a set $\{ x + tv: x \in \mathcal{C} \subset \R^n, v \in \R^n ~\mathrm{with} ~\|v\| = 1, 0 \leq t \leq \gamma \}$ with a compact set $\mathcal{C} \supset {\rm dom}(h)$, a positive constant \revise{$\gamma$, and a norm function $\| \cdot \|$}.
\end{definition}

We call the above definition strong prox-regularity due to the uniform $\gamma$ for all $x \in \mathcal{C}$, which can be seen as an enhanced version of the prox-regularity \cite[Definition 13.27, Proposition 13.37]{rockafellar2009variational}.
Note that the strong prox-regularity holds for any compact $\mathcal{C} \subset \R^n$ and $\gamma >0$ if $h$ is convex \cite{moreau1965proximite}. Here, we present some classes of nonconvex functions satisfying Definition \ref{assum:h}.
\begin{itemize}
    \item[{\rm (i)}] $h$ is weakly convex. A function is called weakly convex with modulus $\rho >0$ if $h(x) + \frac{\rho}{2}\|x\|^2$ is convex. By using the same idea for the convex functions, one can verify that $\mathrm{prox}_{t h}$ is single-valued, Lipschitz continuous, and monotone when $t < \frac{1}{\rho}$. Optimization with weakly convex objective functions has been considered in \cite{davis2019stochastic}.
    \item[{\rm (ii)}] $h$ is the indicator function of a proximally smooth set \cite{clarke1995proximal}. For a set $\mathcal{X} \subset \R^n$,	define its closed $r$-neighborhoods
	\be \label{eq:prox-smooth-nei}	\mathcal{X}(r):=\left\{u \in \R^n: d_{\mathcal{X}}(u) \leq r\right\}, \;\; \text{with} \;\; d_{\mathcal{X}}(u):=\inf \{\|u-x\|: x \in \mathcal{X}\}. \ee
	We say that $\mathcal{X}$ is $r$-proximally smooth if the
	nearest-point projection $\mathrm{proj}_{\mathcal{X}}$ is single-valued on $\mathcal{X}(r)$. In addition, the proximal operator (which is the same as the projection operator onto $\mathcal{X}$) is Lipschitz continuous and monotone \cite[Theorem 4.8]{clarke1995proximal} on $\mathcal{X}(r)$. Note that the projection operator onto a smooth and compact manifold embedded in Euclidean space is a smooth mapping on a neighborhood of the manifold \cite{foote1984regularity}. It is also worth mentioning that the Stiefel manifold is 1-proximally smooth \cite[Proposition 1]{balashov2022error}.
\end{itemize}

As shown above, optimization with weakly convex regularizers or constraints of the proximally smooth set can be fitted into \eqref{prob}. The strong prox-regularity serves as a general concept to put different problem classes together and allows us to derive a uniform algorithmic design and theoretic analysis. Since the proximal operator is single-valued, Lipschitz continuous, and monotone on a compact set, one can further explore the differentiability and design second-order type algorithms to obtain the algorithmic speedup and fast convergence rate guarantee.

It has been shown in \cite{bohm2021variable} that two popular nonsmooth nonconvex regularizers, the minimax concave penalty \cite{zhang2010nearly} and the smoothly clipped absolute deviation \cite{fan1997comments}, are weakly convex. Since any smooth manifold is proximally smooth, the manifold optimization problems \cite{absil2009optimization,hu2020brief,boumal2020introduction} take the form \eqref{prob}. Besides, we are also motivated by the following applications, where $h$ is from the oblique manifold and a simple $\ell_1$ norm or the constraint of nonnegativity. \revise{Let us note that such $h$ is not weakly convex or the indicator function of a smooth manifold.}

\subsection{Motivating examples} \label{sec:motivate-exam}

\subsubsection*{Example 1. Sparse PCA on oblique manifold}
	In \cite{huang2021riemannian}, the authors consider the following formulation of sparse PCA:
	\be \label{prob:spca} \min_{X \in {\rm Ob}(n,p)} \;\; \|X^\top A^\top A X - D^2 \|_F^2 + \lambda \|X\|_1,  \ee
	where ${\rm Ob}(n,p) = \{ X \in \R^{n \times p}:\diag(X^\top X) = 1_{p} \}$ with $\diag(B)$ being a vector consisting of the diagonal entries of $B$ and $1_{p} \in \R^n$ of all elements 1, $D$ is a diagonal matrix whose diagonal entries are the first $p$ largest singular values of $A$,  $\|\cdot \|_F$ denotes the matrix Frobenius norm, $\|X\|_1:= \sum_{i=1}^n\sum_{j=1}^p |X_{ij}|$, and $\lambda > 0$ is a parameter to control the sparsity. Problem \eqref{prob:spca} takes the form \eqref{prob} by letting
	\begin{equation}\label{eq:l1oblique}
	   h(X) = \lambda \|X\|_1 + \delta_{{\rm{Ob}(n,p)}}(X),
	\end{equation}
	 where $\delta_{\mathcal{C}}(\cdot)$ denotes the indicator function of the set $\mathcal{C}$. Utilizing the separable structure and the results by \cite{xiao2021geometric}, the $i$-th column of ${\rm prox}_{th}(X)$, denoted by $({\rm prox}_{th}(X))_i$, is
    \[ ({\rm prox}_{th}(X))_i =
     \begin{cases}
    	(\underbrace{0, \ldots, 0}_{j-1}, {\rm sign}(X_{ij}), \underbrace{0, \ldots, 0}_{n-j})^\top, \;\; & {\rm if} \;\; w \geq 0, \\
    	-w_i^-/\|w_i^-\|_2\cdot {\rm sign}(X_i), \;\; & {\rm otherwise},
    \end{cases} \]
where $w_i = \lambda t - |X_i|$, $X_i$ is the $i$-th column of $X$, $w_i^- = \min(w_i, 0)$, $\mathrm{sign}(a)$ returns $1$ if $a \geq 0$ and $-1$ otherwise, and $j = \argmin_{1\leq k \leq n} w_i(k)$. Note that $\mathrm{prox}_{th}$ is not unique for all $X \in \R^{n\times p}$ and $t > 0$. We will give the specific $\mathcal{C}$ and $\gamma$ such that ${\rm prox}_{th}$ is strongly prox-regular later in Section \ref{sec:verifty-prox-regularity}.

	\subsubsection*{Example 2. Nonnegative PCA on oblique manifold}
	If the nonnegativity of the principal components is required, we have the following nonnegative PCA model
	\be \label{prob:npca} \min_{X \in {\rm Ob}^{+}(n,p)} \;\; \|X^\top A^\top A X - D^2 \|_F^2, \ee
	where ${\rm Ob}^{+}(n,p) := {\rm Ob}(n,p) \cap \{X \in \R^{n\times p}: X_{ij} \geq 0 \}$ and $D$ is defined as in \eqref{prob:spca}. Note that a more general formulation with smooth objective function over ${\rm Ob}^{+}(n,p)$ has been considered in \cite{jiang2019exact}. 
	Problem \eqref{prob:npca} falls into \eqref{prob} by letting
	\begin{equation}\label{eq:negoblique}
	    h(X) = \delta_{{{\rm{Ob}}^+(n,p)}}(X)
	\end{equation}
	 is the indicator function of $
	\mathrm{Ob}^+(n,p)$.
Due to the separable structure, the $i$-th column of ${\rm prox}_{th}(X)$, denoted by $({\rm prox}_{th}(X))_i$, is
\[ ({\rm prox}_{th}(X))_i =\begin{cases}
	(\underbrace{0, \ldots, 0}_{j-1}, 1, \underbrace{0, \ldots, 0}_{n-j}), & \;\; {\rm if} \; \max(X_i) \leq 0, \\
	X_i^+ /\|X_i^+\|_2, & \;\; {\rm otherwise},
\end{cases}   \]
where $j = \argmin_{1\leq k\leq n} X_{ik}$ in the first case, $X_i^+ = \max(X_i, 0)$, and $X_i$ is the $i$-th column of $X$. Note that this projection is not unique for all $X \in \R^{n\times p}$, e.g., $X = 0$. We will show its strong prox-regularity later in Section \ref{sec:verifty-prox-regularity}.
\subsubsection*{Example 3. Sparse least square regression with probabilistic simplex constraint}
	
	The authors of \cite{xiao2021geometric,li2021simplex} consider the spherical constrained formulation of the following optimization problems:
	\be \label{prob:slr} \begin{aligned}
		\min_{y \in \R^n} \;\; \frac{1}{2} \|Ay - b\|_2^2, \;\; \st \;\; y \in \Delta_n,
	\end{aligned} \ee
	where $\Delta_n = \{y\in \R^n: y \geq 0, 1_n^\top y = 1 \}$, $A \in \R^{m \times n}$, and $b \in \R^m$. By decomposing $y = x \odot x$, it holds that
	\[ y \in \Delta_n \;\; \Longleftrightarrow \;\; x \in {\rm Ob}(n,1). \]
	Adding a sparsity constraint on $x$ leads to the following optimization problem
	\be \label{prob:slr1} \begin{aligned}
		\min_{x \in \R^n} \;\; \frac{1}{2} \|A(x\odot x) - b\|_2^2 + \lambda \|x\|_1, \;\; \st \;\; x \in {\rm Ob}(n,1).
	\end{aligned} \ee
	By taking $h(x) = \lambda\|x\|_1 + \delta_{{\rm Ob}(n,1)}$, problem \eqref{prob:slr1} has the form \eqref{prob}. Due to the separable structure of the proximal operator of \eqref{eq:l1oblique}, the strong prox-regularity of $h$ here is similar to that of \eqref{eq:l1oblique}.
	
\subsection{Literature review}

The composite optimization problem arises from various applications, such as signal processing, statistics, and machine learning. When $h$ is convex, extensive first-order methods are designed, such as the proximal gradients and its Nesterov's accelerated versions, the alternating direction methods of multipliers, etc. We refer to \cite{boyd2011distributed,beck2017first} for details. For faster convergence, second-order methods, such as proximal Newton methods \cite{lee2014proximal,kanzow2021globalized} and semismooth Newton methods \cite{mifflin1977semismooth,qi1993nonsmooth,qi1999survey,byrd2016family,milzarek2014semismooth,zhao2010newton,xiao2018regularized,li2018highly,li2018semismooth} are also developed for the nonsmooth problem \eqref{prob}. If $h$ is nonconvex, the proximal gradient methods are developed for $\ell_{1/2}$ norm in \cite{xu2012l_} and more nonconvex regularizers \cite{gong2013general,yang2017proximal}. The global convergence is established by utilizing the smoothness of $f$ and the explicit solution of the proximal subproblem.

In the case of $h$ being weakly convex, subgradient-type methods \cite{davis2019stochastic,davis2018subgradient} and proximal point-type method \cite{drusvyatskiy2017proximal} yield lower complexity bound.
Optimization with prox-regular functions has recently attracted much attention. The authors \cite{themelis2018forward} propose a gradient-type method to solve the forward-backward envelope of $\varphi$. This can be seen as a variable-metric first-order method.
Since the Moreau envelope of a prox-regular function is continuously differentiable, a nonsmooth Newton method is designed to solve the gradient system of the Moreau envelope in \cite{khanh2020generalized,khanh2021globally}. Note that the indicator function of a proximally smooth set is prox-regular \cite{clarke1995proximal}, the authors of \cite{balashov2022error} developed a generalized Newton method to fixed point equation induced by the projected gradient method.

In the case of $h$ being the indicator function of a Riemannian manifold, the efficient Riemannian algorithms have been extensively studied in the last decades \cite{absil2009optimization,wen2013feasible,hu2020brief,boumal2020introduction}.
When $h$ takes the form \eqref{eq:l1oblique}, the manifold proximal gradient methods \cite{chen2018proximal,huang2021riemannian} are designed. These approaches only use first-order information and do not have superlinear convergence. In addition, manifold augmented Lagrangian methods are also proposed in works \cite{deng2022manifold,zhou2021semi}, in which the subproblem is solved by the first-order method or second-order method. When it comes to the case of \eqref{eq:negoblique}, a second-order type method is proposed in the recent work \cite{jiang2019exact}. While in their subproblems, only the second-order information of the smooth part is explored.
\subsection{Our contributions}In this paper, we propose a projected semismooth Newton method to deal with a class of nonsmooth and nonconvex composite programs. In particular, the nonsmooth part is nonconvex but satisfies the proposed strong prox-regularity properties.  
Our main contributions are as follows:
\begin{itemize}
    \item We introduce the concept of strong prox-regularity. Different from the classic prox-regularity, the strong prox-regularity enjoys some kind of uniform proximal regularity around a compact region containing all feasible points. A crucial property is that the proximal operator of a strongly prox-regular function locally behaves like that of convex functions. With the strong prox-regularity, the stationary condition can be reformulated as a single-valued residual mapping which is Lipschitz continuous and monotone on the compact region. 
    We present several classes of functions satisfying both the strong prox-regularity condition, including weakly convex functions and indicator functions of proximally smooth sets (including manifold constraints). \revise{In particular, two specific sphere-related nonsmooth and nonconvex functions, which are not weakly convex or indicator functions of a smooth manifold, are verified to satisfy the strong prox-regularity.}
    \item \revise{As shown in Section \ref{sec:motivate-exam}, two sphere-related nonsmooth and nonconvex functions result in composite optimization problems on manifold. In this paper, we propose the first second-order type method to solve this kind of problem, which outperforms state-of-the-art first-order methods \cite{chen2018proximal,huang2021riemannian}. It is worth mentioning that first-order methods \cite{chen2018proximal,huang2021riemannian} fails in solving the nonnegative PCA on the oblique manifold due to their dependence on the Lipschitz continuity of the nonsmooth part.}
    \item By introducing the strong prox-regularity condition and semismoothness, we design a residual-based projected semismooth Newton method to solve the nonconvex composite optimization problem \eqref{prob}. To tackle the nonconvexity, we add an extra projection on the usual semismooth Newton step and switch to the proximal gradient step if two proposed inexact conditions are not satisfied. Compared with the Moreau-envelope based approaches \cite{khanh2020generalized,khanh2021globally}, we decouple the composite structures and design a second-order method by utilizing the second-order derivative of the smooth part and the generalized Jacobian of the proximal operator of $h$. 
    \item The global convergence of the proposed projected semismooth Newton method is presented. Other than the strong prox-regularity condition and the semismoothness, the assumptions are standard and can be achieved by various applications including our motivating examples. We prove the switching conditions are locally satisfied, which allows the local transition to the projected semismooth Newton step. By assuming the BD-regularity condition, we show the local superlinear convergence.
    Numerical experiments on various applications demonstrate the efficiency over state-of-the-art ones.
\end{itemize}

\subsection{Notation}Throughout, we consider the Euclidean space $\mathbb{R}^n$, equipped with an inner product $\left<\cdot,\cdot\right>$ and the induced norm $\|x\| = \sqrt{\left<x,x\right>}$. Given a matrix $A$, we use $\|A\|_F$ to denote its Frobenius norm, $\|A\|_1:=\sum_{ij}|A_{ij}|$ to denote its $\ell_1$ norm. For a vector $x$, we use $\|x\|_2$ and $\|x\|_1$ to denote its Euclidean norm and $\ell_1$ norm, respectively. The indicator function of $\mathcal{C}$, denoted $\delta_{\mathcal{C}}$, is defined to be zero on $\mathcal{C}$ and $+\infty$ otherwise. The symbol $\mathbb{B}$ will denote the closed unit ball in $\mathbb{R}^n$, while $\mathbb{B}(x,\epsilon)$ will stand for the closed ball of the radius of $\epsilon>0$ centered at $x$.

\subsection{Organization}The outline of this paper is as follows. In Section \ref{sec:preli}, we present the preliminaries on the subdifferential, concepts of stationarity, and semismoothness. Various nonconvex and nonsmooth functions satisfying the strong prox-regularity and semismoothness are demonstrated in Section \ref{sec:verifty-prox-regularity}. Then, we propose a projected semismooth Newton method in Section \ref{sec:ssn}. The corresponding convergence analysis of the proposed method is provided in Section \ref{sec:conver}. We illustrate the efficiency of our proposed method by several numerical experiments in Section \ref{sec:num}. Finally, a brief conclusion is given in Section \ref{sec:con}.

\section{Preliminaries}\label{sec:preli}
In this section, we first review some basic notations of subdifferential and give the definition of the prox-regular function. We also introduce several concepts of stationarity and present the definition of semismoothness.
\subsection{Subdifferential and prox-regular functions}
Let $\varphi: \mathbb{R}^{n} \rightarrow(-\infty,+\infty]$ be a proper, lower semicontinuous, and extended real-valued function. The domain of $\varphi$ is defined as $\operatorname{dom}(\varphi)=\{{x} \in$ $\left.\mathbb{R}^{n}: \varphi({x})<+\infty\right\}$. A vector ${v} \in \mathbb{R}^{n}$ is said to be a Fr\'{e}chet subgradient of $\varphi$ at ${x} \in \operatorname{dom}(\varphi)$ if
\begin{equation}\label{eq:subdiff}
   \liminf _{{y} \rightarrow {x}, \atop {y} \neq {x}} \frac{\varphi({y})-\varphi({x})-\langle{v}, {y}-{x}\rangle}{\|{y}-{x}\|} \geq 0 .
\end{equation}
The set of vectors ${v} \in \mathbb{R}^{p}$ satisfying \eqref{eq:subdiff} is called the Fr\'{e}chet subdifferential of $\varphi$ at ${x} \in \operatorname{dom}(\varphi)$ and denoted by $\widehat{\partial} \varphi({x})$. The limiting subdifferential, or simply the subdifferential, of $\varphi$ at ${x} \in$ $\operatorname{dom}(\varphi)$ is defined as
$$
\partial \varphi({x})=\left\{{v} \in \mathbb{R}^{n}: \exists {x}^{k} \rightarrow {x}, {v}^{k} \rightarrow {v} \text { with } \varphi\left({x}^{k}\right) \rightarrow \varphi({x}), {v}^{k} \in \widehat{\partial} \varphi \left({x}^{k}\right)\right\} .
$$
By convention, if ${x} \notin \operatorname{dom}(\varphi)$, then $\partial \varphi({x})=\emptyset .$ The domain of $\partial \varphi$ is defined as $\operatorname{dom}(\partial \varphi)=$ $\left\{{x} \in \mathbb{R}^{n}: \partial \varphi({x}) \neq \emptyset\right\} .$ For the indicator function $\delta_{\mathcal{S}}: \mathbb{R}^{n} \rightarrow\{0,+\infty\}$ associated with the non-empty closed set $\mathcal{S} \subseteq \mathbb{R}^{n}$, we have
$$
\widehat{\partial} \delta_{\mathcal{S}}({x})=\left\{{v} \in \mathbb{R}^{n}: \limsup _{{y} \rightarrow {x}, {y} \in \mathcal{S}} \frac{\langle{v}, {y}-{x}\rangle}{\|{y}-{x}\|} \leq 0\right\} \quad \text { and } \quad \partial \delta_{\mathcal{S}}({x})=\mathcal{N}_{\mathcal{S}}({x})
$$
for any ${x} \in \mathcal{S}$, where $\mathcal{N}_{\mathcal{S}}({x})$ is the normal cone to $\mathcal{S}$ at ${x}$.

The function $\varphi$ is prox-bounded \cite[Definition 1.23]{rockafellar2009variational} if there exists $\lambda > 0$ such that $e_{\lambda} \varphi(x) := \inf_{y} \{ \varphi(y) + \frac{1}{2\lambda}\|y-x\|^2 \} > -\infty$ for some $x \in \R^n$. The supremum of the set of all such $\lambda$ is the threshold $\lambda_{\varphi}$ of prox-boundedness for $\varphi$. The function $\varphi$ is prox-regular \cite[Definition 13.27]{rockafellar2009variational} at $\bar{x}$ for $\bar{v}$ if $\varphi$ is finite and locally lower semicontinuous at $\bar{x}$ with $\bar{v} \in \partial \varphi(\bar{x})$, and there exist $\varepsilon > 0$ and $\rho \geq 0$ such that
\be  \label{eq:prox-regular} \varphi(x') \geq \varphi (x) + \iprod{v}{x'-x} - \frac{\rho}{2}\|x'-x\|^2, \;\forall x' \in \mathbb{B}(\bar{x},\varepsilon), \ee
when $v \in \partial \varphi(x), \; \|v-\bar{v}\| < \varepsilon, \|x - \bar{x}\| < \varepsilon, \varphi(x) < \varphi(\bar{x}) + \varepsilon$. If the above inequality holds for all $\bar{v} \in \partial \varphi(\bar{x})$, $\varphi$ is said to be prox-regular at $\bar{x}$. Note that the inequality \eqref{eq:prox-regular} holds for all $x'\in {\rm dom}(\varphi)$ and $v \in \partial \varphi(x)$ with a uniform $\rho$ if $\varphi$ is weakly convex. It follows from \cite[Exercise 13.35]{rockafellar2009variational} that the summation of a smooth function and a prox-regular function is prox-regular as well.

For prox-regular functions, we have the following fact.
\begin{Proposition}(\cite[Proposition 13.37]{rockafellar2009variational}, \cite[Lemma 6.3]{khanh2020generalized}) \label{prop:Lip-prox-map-weakly-convex}
Let $\varphi: \R^n \rightarrow \bar{\R}$ be proper, lower semicontinuous, and prox-bounded with threshold $\lambda_{\varphi}$. Suppose $\varphi$ is finite and prox-regular at $\bar{x}$ for $\bar{v} \in \partial \varphi(\bar{x})$. Then for any sufficiently small $\gamma \in (0,\lambda_{\varphi})$, the proximal mapping ${\rm prox}_{\gamma \varphi}$ is single-valued, monotone, and Lipschitz continuous around $\bar{x} + \gamma \bar{v}$ and satisfies the condition ${\rm prox}_{\lambda \varphi}(\bar{x}+\gamma \bar{v})=\bar{x}$.
\end{Proposition}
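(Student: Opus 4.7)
The plan is to first localize the problem so that the prox-regularity inequality \eqref{eq:prox-regular} applies uniformly on a fixed neighborhood of $\bar{x}$, and then to exploit the resulting hypomonotonicity of $\partial\varphi$, combined with the first-order optimality condition for the proximal subproblem, to derive single-valuedness, monotonicity, and Lipschitz continuity simultaneously. The identity $\mathrm{prox}_{\gamma\varphi}(\bar{x}+\gamma\bar{v})=\bar{x}$ is essentially immediate from $\bar{v}\in\partial\varphi(\bar{x})$ once uniqueness is in hand, since this inclusion is exactly the stationarity condition $0\in\gamma\partial\varphi(\bar{x})+(\bar{x}-(\bar{x}+\gamma\bar{v}))$ for the subproblem at $\bar{x}$.

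For the localization step, I would fix the parameters $\rho,\varepsilon>0$ from the prox-regular definition and choose $\gamma\in(0,\lambda_\varphi)$ small enough that $\gamma\rho<1$. Prox-boundedness together with lower semicontinuity guarantees that the proximal minimization admits a global solution for every argument $z$. The delicate point is to show that for $z$ sufficiently close to $\bar{x}+\gamma\bar{v}$, every such minimizer $p$ automatically satisfies $\|p-\bar{x}\|<\varepsilon$ and $\varphi(p)<\varphi(\bar{x})+\varepsilon$, and that the implied dual variable $v:=(z-p)/\gamma$ satisfies $\|v-\bar{v}\|<\varepsilon$. I would argue this by comparing $\varphi(p)+\frac{1}{2\gamma}\|p-z\|^2$ against the reference value $\varphi(\bar{x})+\frac{1}{2\gamma}\|\bar{x}-z\|^2$ and using that $\varphi$ is bounded below by a quadratic on the complement of $\mathbb{B}(\bar{x},\varepsilon)$ (after absorbing a prox-boundedness term), which forces $p\in\mathbb{B}(\bar{x},\varepsilon)$; the remaining two conditions then follow by continuity of the quadratic terms.

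Once the localization is secured, let $p_i:=\mathrm{prox}_{\gamma\varphi}(z_i)$ for $i=1,2$ with $z_i$ in the common neighborhood identified above. Writing $v_i:=(z_i-p_i)/\gamma\in\partial\varphi(p_i)$ and applying \eqref{eq:prox-regular} once at $(p_1,v_1)$ tested against $p_2$ and once at $(p_2,v_2)$ tested against $p_1$, then adding the two resulting inequalities, produces the hypomonotone estimate
\[
\langle v_1-v_2,\,p_1-p_2\rangle \;\geq\; -\rho\,\|p_1-p_2\|^2.
\]
Substituting $\gamma v_i=z_i-p_i$ and rearranging gives the key bound
\[
\langle z_1-z_2,\,p_1-p_2\rangle \;\geq\; (1-\gamma\rho)\,\|p_1-p_2\|^2.
\]
Single-valuedness follows by choosing $z_1=z_2$, which forces $p_1=p_2$ since $1-\gamma\rho>0$; monotonicity of $\mathrm{prox}_{\gamma\varphi}$ is immediate from the nonnegativity of the right-hand side; and Cauchy--Schwarz on the left yields the Lipschitz bound $\|p_1-p_2\|\leq(1-\gamma\rho)^{-1}\|z_1-z_2\|$.

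The hard part is the localization, not the algebra: the prox-regular inequality carries the $\varphi$-attentive restrictions $\|x-\bar{x}\|<\varepsilon$, $\|v-\bar{v}\|<\varepsilon$, and $\varphi(x)<\varphi(\bar{x})+\varepsilon$, and one must verify that all three are inherited automatically by the proximal minimizers whenever $z$ lies in a neighborhood of $\bar{x}+\gamma\bar{v}$. This is where prox-boundedness, lower semicontinuity, and the $\varphi$-attentive nature of \eqref{eq:prox-regular} cooperate; once this uniform localization is in place, the remainder is just a perturbed version of the classical convex-case argument in which the hypomonotonicity constant $\rho$ is neutralized by the choice $\gamma<1/\rho$.
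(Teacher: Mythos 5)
The paper offers no proof of this proposition; it is imported by citation from \cite[Proposition 13.37]{rockafellar2009variational} and \cite[Lemma 6.3]{khanh2020generalized}, so the only comparison available is with the standard argument in those references, which your proposal essentially reconstructs. Your central computation is correct: once the $\varphi$-attentive localization is secured, Fermat's rule gives $v_i=(z_i-p_i)/\gamma\in\partial\varphi(p_i)$, the two symmetric applications of \eqref{eq:prox-regular} add up to $\langle v_1-v_2,p_1-p_2\rangle\geq-\rho\|p_1-p_2\|^2$, and substituting $\gamma v_i=z_i-p_i$ yields $\langle z_1-z_2,p_1-p_2\rangle\geq(1-\gamma\rho)\|p_1-p_2\|^2$, from which single-valuedness, monotonicity, and the Lipschitz modulus $(1-\gamma\rho)^{-1}$ all follow. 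This is precisely the hypomonotonicity-plus-resolvent mechanism behind the cited result.

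Two points need repair or completion. First, your justification of $\mathrm{prox}_{\gamma\varphi}(\bar x+\gamma\bar v)=\bar x$ via ``stationarity once uniqueness is in hand'' is logically off: $\bar v\in\partial\varphi(\bar x)$ only makes $\bar x$ a critical point of the (nonconvex) subproblem, and uniqueness of the global minimizer does not promote a critical point to the global minimizer. The correct route is already inside your framework: take any global minimizer $p$ at $z=\bar x+\gamma\bar v$ and pair $(p,(z-p)/\gamma)$ against $(\bar x,\bar v)$ in the two-sided estimate --- which requires only $\bar v\in\partial\varphi(\bar x)$, not that $\bar x$ be a proximal point --- to get $0\geq(1-\gamma\rho)\|p-\bar x\|^2$ and hence $p=\bar x$. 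Second, the localization step you rightly flag as the hard part is only asserted, not proved: confining every minimizer $p$ to $\mathbb{B}(\bar x,\varepsilon)$ with $\varphi(p)<\varphi(\bar x)+\varepsilon$ and $\|(z-p)/\gamma-\bar v\|<\varepsilon$ requires combining the comparison inequality $\varphi(p)+\tfrac{1}{2\gamma}\|p-z\|^2\leq\varphi(\bar x)+\tfrac{1}{2\gamma}\|\bar x-z\|^2$ with the global quadratic minorant from prox-boundedness (to bound $p$), and then with lower semicontinuity of $\varphi$ at $\bar x$ to upgrade the resulting upper bound on $\varphi(p)$ to $\varphi(p)\to\varphi(\bar x)$; moreover the admissible neighborhood of $\bar x+\gamma\bar v$ shrinks with $\gamma$. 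As written, the proposal is an accurate outline of the standard proof with its most delicate step left as a sketch.
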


Our proposed prox-regularity condition is a stronger version of the well-known prox-regularity condition in optimization theory. Specifically, our condition requires the proximal operator to be single-valued, monotone, and Lipschitz continuous for a compact region $\mathcal{C}$ with a uniform $\gamma$. As shown later, the uniformity of $\gamma$ plays a critical role in determining the lower bound of step sizes in algorithmic design.

\subsection{Concepts of stationarity and their relationship}
There are two definitions of stationarities based on the subdifferential and the proximal gradient iteration.
\begin{itemize}
    \item Critical point: $x$ is a critical point if
    \be \label{eq:crit} 0 \in \partial \varphi(x) = \nabla f(x) + \partial h(x). \ee
    \item Fixed point of the proximal mapping:
    \be \label{eq:fixed-point} x \in \mathrm{prox}_{t h}(x - t \nabla f(x)), \ee
    where $t > 0$.
\end{itemize}
It follows from the definition of $\mathrm{prox}_{th}$ that any point $x$ satisfying \eqref{eq:fixed-point} yields $0 \in \nabla f(x) + \partial h(x),$
which implies $x$ is also a critical point. Inversely, a critical point may not satisfy \eqref{eq:fixed-point} due to the nonconvexity of $h$. Therefore, equation \eqref{eq:fixed-point} defines a stronger stationary point than \eqref{eq:crit}.

\subsection{Semismoothness}
By the Rademacher's theorem, a locally Lipschitz operator is almost everywhere differentiable. For a locally Lipschitz $F$, denote by $D_F$ the set of the differential points of $F$. The $B$-subdifferential at $x$ is defined as
$$
\partial_{B} F(x):=\left\{\lim _{k \rightarrow \infty} J\left(x^{k}\right) \mid x^{k} \in D_{F}, x^{k} \rightarrow x\right\},
$$
where $J(x)$ represents the Jacobian of $F$ at the differentiable point $x$. Obviously, $\partial_{B} F(x)$ may not be a singleton. The Clarke subdifferential $\partial_C F(x)$ is defined as
\[ \partial_C F(x) = \mathrm{conv}\left(\partial_{B} F(x)\right),\]
where $\mathrm{conv}(A)$ represents the closed convex hull of $A$. A locally Lipschitz continuous operator $F$ is called semismooth at $x$ if
\begin{itemize}
    \item $F$ is directional differentiable at $x$.
    \item For all $d$ and $J \in \partial_C F(x+d)$, it holds that
\[ \| F(x+d) -  F(x) - Jd \| = o(\|d\|), \;\; d \rightarrow 0. \]
\end{itemize}
We say $F$ is semismooth if $F$ is semismooth for any $x \in \R^n$.  If $f$ is twice continuously differentiable and $\mathrm{prox}_{th}$ is single-valued, Lipschitz continuous, and semismooth with respect to its B-subgradient $D(x)$, one can verify that if $I - t\nabla^2 f(x)$ is nonsingular, the operator $F(x):= \mathrm{prox}_{th}(x - t \nabla f(x)) - x$ is semismooth with respect to 
\be \label{eq:jacob-f} M(x):=\{ I - D(I - t\nabla^2 f(x)) : D \in D(x) \} \ee
by using the definition of semismoothness \cite{chan2008constraint}.

\section{Semismooth and strongly prox-regular functions} \label{sec:verifty-prox-regularity}
Let us verify the semismoothness and the strongly prox-regularity condition for some typical nonconvex nonsmooth functions $h$.
\subsection{Weakly convex function}
Following \cite{moreau1965proximite}, one can verify that the strong prox-regularity holds for $\rho$-weakly convex functions if $t \leq 1/\rho$. The semismoothness of the proximal operator of a weakly convex function generally does not hold, which happens in the convex case as well. 
While two popular nonconvex regularizers for reducing bias are the minimax concave penalty (MCP) \cite{zhang2010nearly} and the smoothly clipped absolute deviation \cite{fan1997comments}, the semismoothness is satisfied. Specifically, the MCP is defined as
\[ h_{\lambda, \theta}(x):= \begin{cases}\lambda|x|-\frac{x^{2}}{2 \theta}, & |x| \leq \theta \lambda, \\ \frac{\theta \lambda^{2}}{2}, & \text { otherwise, }\end{cases} \]
where $\lambda$ and $\theta$ are two positive parameters. It is weakly convex with modulus $\rho = \theta^{-1}$. If $t < \theta$, the closed-form expression of the proximal operator is
\[ \operatorname{prox}_{t h}(x)= \begin{cases}0, & |x|<t  \lambda, \\ \frac{x-\lambda t \operatorname{sign}(x)}{1-(t / \theta)}, 
& t \lambda \leq|x| \leq \theta \lambda, \\ x, 
& |x|>\theta \lambda. \end{cases}
\]
The semismoothness property of the MCP regularizer is presented in \cite{shi2019semismooth}. Analogously, one can also verify the weak convexity of the SCAD regularizer and the semismoothness of its proximal operator. We refer to \cite{bohm2021variable} and \cite{shi2019semismooth} for the details. Numerical results in \cite{shi2019semismooth} exhibit the efficiency of semismooth Newton methods.   

\subsection{Smooth and compact embedded manifold}
Since any smooth manifold is a proximally smooth set, there exists a neighborhood $\mathcal{X}(r)$ of the form \eqref{eq:prox-smooth-nei} such that the projection is single-valued, Lipschitz continuous, and monotone \cite[Theorem 4.8]{clarke1995proximal}. On the other hand, the projection onto smooth and compact embedded manifold is also a smooth mapping \cite{foote1984regularity} on $\mathcal{X}(r)$. Putting them together, we conclude that the indicator function is strongly prox-regular and the corresponding projection operator is smooth over $\mathcal{X}(r)$. For a special sphere-constrained smooth optimization problem, the Bose-Einstein condensates, we will show the numerical superiority of our proposed method using strong prox-regularity and semismoothness. For general smooth optimization problems with orthogonal constraints, we refer the reader to \cite{gawlik2017iterative} for the calculations of the generalized Jacobian of the polar decomposition.

\subsection{Two specific oblique manifold related nonconvex functions}
We shall show that the nonconvex and nonsmooth functions \eqref{eq:l1oblique} and \eqref{eq:negoblique} satisfy the strong prox-regularity and semismoothness.
\begin{lemma} \label{lem:strong-prox}
The functions $h$ defined in both \eqref{eq:l1oblique} and \eqref{eq:negoblique} are strongly prox-regular and their proximal operators are semismooth. Specifically,
\begin{itemize} 
    \item[{\rm (i)}] Let $\mathcal{C}_1 = {\rm Ob}(n,p)$, $\|V\|_{2,\infty} := \max_{i=1,2,\ldots,p} \|V_i\|_2$, and $\gamma_1 =  \frac{1}{(\lambda + 1) n}$. The function $h(X) = \lambda \|X\|_1 + \delta_{{\rm Ob}(n,p)}(X)$ is strongly prox-regular with respect to $\mathcal{C}_1$, $\| \cdot \|_{2,\infty}$ and $\gamma_1$. Moreover, the proximal mapping ${\rm prox}_{th}$ is semismooth over the set $\mathcal{D}_1 = \{X + tV: X \in \mathcal{C}_1, \|V\|_{2,\infty} = 1, 0 \leq t \leq \gamma_1 \}$.
    \item[{\rm (ii)}] Let $\mathcal{C}_2 = {\rm Ob}^+(n,p)$ and $0 < \gamma_2 < 1$. The function $h(X) = \delta_{{\rm Ob}^+(n,p)}(X)$ is strongly prox-regular with respect to $\mathcal{C}_2$, $\| \cdot \|$ and $\gamma_2$. Moreover, the proximal mapping ${\rm prox}_{th}$ is semismooth over the set $\mathcal{D}_2 = \{X + tV: X \in \mathcal{C}_2, \|V\|_{2,\infty} = 1, 0 \leq t \leq \gamma_2 \}$.
\end{itemize}
\end{lemma}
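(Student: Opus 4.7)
The plan is to exploit the column-wise separability of both proximal operators and of the norm $\|\cdot\|_{2,\infty}$, so that it suffices to verify the four required properties (single-valuedness, Lipschitz continuity, monotonicity, semismoothness) on a single-column problem ($p=1$). Writing $Y = X + tV$ with $\|X_i\|_2 = 1$ and $\|V_i\|_2 \leq 1$, I would first check that the specified $\gamma_i$ keeps us in the ``generic'' branch of the closed-form expression. From $\|Y_i\|_2 \geq 1-t$ one gets $\|Y_i\|_\infty \geq (1-t)/\sqrt{n}$; for (i), the choice $t \leq \gamma_1 = 1/((\lambda+1)n)$ gives $(1-t)/\sqrt{n} > \lambda t$, so $w_i$ has a strictly negative entry, and for (ii), taking $\gamma_2$ sufficiently small (which is implied by $\gamma_2 < 1$ together with a $1/\sqrt{n}$ margin) gives $\max_k Y_{ki} > 0$, hence $Y_i^+ \neq 0$. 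In both cases the active formula is a composition of a piecewise-linear map (soft-thresholding or positive-part) with the normalization $u \mapsto u/\|u\|_2$.

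With the generic formula isolated, single-valuedness is immediate. Lipschitz continuity follows from $1$-Lipschitzness of soft-thresholding/positive-part and smoothness of the normalization on the compact region where the denominator is bounded below by a positive constant depending only on $\gamma_i$ and $n$. Semismoothness is inherited from piecewise linearity together with smoothness of the normalization off the origin, via the chain rule for semismooth operators. So the substantive content of the lemma is the uniform monotonicity.

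For case (ii), I would argue directly that each positive spherical cap $S^{n-1}_+ := \{x \in \R^n : \|x\|_2 = 1,\ x \geq 0\}$ is $1$-proximally smooth: any $y$ with $\mathrm{dist}(y, S^{n-1}_+) < 1$ must have a strictly positive coordinate (otherwise $\|y - e_k\|_2 \geq 1$ for every standard basis vector $e_k$), so $y^+/\|y^+\|_2$ is the unique projection, and monotonicity on the $1$-tube follows from \cite[Theorem~4.8]{clarke1995proximal}. Passing to Cartesian products respects the $\|\cdot\|_{2,\infty}$ geometry, giving proximal smoothness of $\mathrm{Ob}^+(n,p)$ for any $\gamma_2 < 1$.

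The hardest part is monotonicity in case (i), where the $\ell_1$ term is added to the indicator of the oblique manifold. The sum of a convex function and the indicator of a proximally smooth set is not automatically prox-regular, so rather than invoke \cite[Exercise~13.35]{rockafellar2009variational} and Proposition~\ref{prop:Lip-prox-map-weakly-convex} abstractly, I would expand $\langle u^1 - u^2, Y^1 - Y^2 \rangle$ directly from the closed form $u^\ell_i = \mathcal{S}_{\lambda t}(Y^\ell_i)/\|\mathcal{S}_{\lambda t}(Y^\ell_i)\|_2$, where $\mathcal{S}_{\lambda t}$ is componentwise soft-thresholding. The monotonicity of the (convex) soft-thresholding operator contributes a nonnegative term, while the normalization step introduces a sphere-curvature correction of order $t$. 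The bookkeeping needed to verify that, for $t \leq 1/((\lambda+1)n)$, the positive contribution dominates the correction uniformly in $X \in \mathcal{C}_1$ is the step I expect to require the most care, and the specific constant $1/((\lambda+1)n)$ should emerge from balancing these two terms.
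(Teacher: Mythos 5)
Your reduction to $p=1$, the branch analysis showing that $\gamma_1$ (resp.\ $\gamma_2$) keeps the soft-thresholded vector nonzero (resp.\ keeps $Y_i^+\neq 0$), and the Lipschitz/semismoothness arguments all match the paper in substance, and your route for monotonicity in case (ii) --- proving directly that the positive spherical cap is $1$-proximally smooth and invoking \cite[Theorem 4.8]{clarke1995proximal} --- is a legitimate and arguably cleaner alternative to the paper's computation. The genuine gap is in case (i), precisely the step you yourself flag as the crux: the monotonicity of $X\mapsto \mathrm{prox}_{th}(X)$ for $h=\lambda\|\cdot\|_1+\delta_{\mathrm{Ob}(n,p)}$ is only announced as a plan (``expand $\langle u^1-u^2,Y^1-Y^2\rangle$ and check that the soft-thresholding term dominates a curvature correction of order $t$''), not carried out. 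Moreover, the plan misdiagnoses where the constant $\gamma_1=\frac{1}{(\lambda+1)n}$ comes from: it is not the result of balancing a positive monotonicity term against a negative $O(t)$ correction. In the paper's argument there is no such balance at all --- the constant only serves to guarantee single-valuedness (staying in the branch where $w_i$ has a negative entry), while monotonicity holds outright.

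The missing idea is to work at the level of the generalized Jacobian rather than the finite-difference inner product. Writing the per-column map as the normalization $u\mapsto u/\|u\|_2$ composed with soft-thresholding, its directional derivative at $X$ along $U$ is $(D(X)[U])_i=\frac{U_i^-}{\|\tilde w_i^-\|_2}-\frac{\tilde w_i^-(\tilde w_i^-)^\top U_i^-}{\|\tilde w_i^-\|_2^3}$, and the Cauchy--Schwarz inequality gives
\[
\iprod{(D(X)[U])_i}{U_i}=\bigl(\|U_i^-\|_2^2\|\tilde w_i^-\|_2^2-((\tilde w_i^-)^\top U_i^-)^2\bigr)/\|\tilde w_i^-\|_2^3\geq 0
\]
with no smallness condition on $t$; the key structural fact making this work is that the support of $\tilde w_i^-$ is contained in the coordinate pattern selected by the soft-thresholding Jacobian, so the cross term is controlled by $\|U_i^-\|_2\|\tilde w_i^-\|_2$ exactly. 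Positive semidefiniteness of every element of the generalized Jacobian then yields monotonicity of the locally Lipschitz map via \cite[Proposition 2.1]{schaible1996generalized}. Without this (or a completed version of your direct expansion, which would have to rediscover the same cancellation in a messier form), the monotonicity claim in (i) --- and hence the strong prox-regularity of \eqref{eq:l1oblique} --- is unproven.
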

\begin{proof}
Let us prove (i) and (ii), respectively.
\begin{itemize}
    \item[(i)] Note that for any vector $x \in \R^n$ with $\|x\|_2 = 1$, $\|x\|_\infty \geq 1 / \sqrt{n}$.
Following from the definition of the proximal mapping \eqref{eq:l1oblique}, we have for $t \leq \gamma_1$, the proximal mapping ${\rm prox}_{th}$ is single-valued and Lipschitz continuous over $\mathcal{D}_1$. In addition, its directional derivative at $X$ along $U \in \R^{n\times p}$
\[ (D(X)[U])_i = \frac{U_i^-}{\|\tilde{w}_i^-\|_2} -  \frac{\tilde{w}_i^- (\tilde{w}_i^-)^\top U_i^-}{\|\tilde{w}_i^-\|_2^3}, \]
where $\tilde{w}_i^- = w_i^{-}\odot \mathrm{sign}(X_i), \; U_i^- =  U_i \odot 1_{w_i \leq 0}$, and $1_{a \leq 0}$ returns $1$ if $a\leq 0$ and $0$ otherwise. Due to the Cauchy inequality, it holds that $$\iprod{(D(X)[U])_i}{U_i} = \left( \|U_i^-\|_2^2\|\tilde{w}_i^-\|_2^2 - ((\tilde{w}_i^-)^\top U_i^-)^2 \right)/\|\tilde{w}_i^-\|_2^3 \geq 0.$$
Then the elements of generalized Jacobian of $\mathrm{prox}_{h}$ at $X$ are positive semidefinite. 
The monotonicity of ${\rm prox}_{th}$ follows from the positive semidefiniteness \cite[Proposition 2.1]{schaible1996generalized}. 

Since the proximal mapping \eqref{eq:l1oblique} is separable with respect to the columns in $X$, its semismoothness property can be reduced to the case of $p = 1$. Note that the nondifferential points of ${\rm prox}_{th}$ are in the set $\mathcal{A} := \{ x \in \R^n: \exists i,\; |x_i| = \lambda t \}$. At a nondifferentiable point $x \in \mathcal{A}$, let $d \in \R^n$ be a direction. Without loss of generality, assume $x_i = t\lambda$ and $|x_j| \ne t \lambda$ for all $j \ne i$. If $d_i > 0$, we have $\partial {\rm prox}_{th}(x+d) = \frac{\diag(1_{\tilde{w}< 0})}{\| \tilde{w}^-\|_2} - \frac{\tilde{w}^{-} (\tilde{w}^-)^\top}{\|\tilde{w}^-\|_2^3}=:J(x+d)$, with $\tilde{w}^- = \min(\lambda t - |x + d|, 0) \odot {\rm sign}(x)$. Define $\tilde{d}_j = d_j$ if $j \ne i$ and 0 otherwise. Note that ${\rm prox}_{th}(x+d) = {\rm prox}_{th}(x+\tilde{d})$, $J(x+d) = J(x + \tilde{d})$ and $J(x+d)d = J(x+\tilde{d})\tilde{d}$. Thus,
\[ \begin{aligned}
   & \|{\rm prox}_{th}(x+d) - {\rm prox}_{th}(x) - J(x+d) d\| \\
   = & \|{\rm prox}_{th}(x+\tilde{d}) - {\rm prox}_{th}(x) - J(x+\tilde{d}) \tilde{d}\| \\
   = & o(\|\tilde{d}\|) = o(\|d\|).
\end{aligned}
 \]
 One can draw a similar conclusion for the case $d_i < 0$. Combining them together, we conclude that ${\rm prox}_{th}$ is semismooth.
 \item[(ii)] It follows from the definition of the proximal mapping \eqref{eq:negoblique} that $\mathrm{prox}_{th}(X)$ is single-valued and Lipschitz continuous over $\mathcal{D}_2$. Furthermore, its directional derivative of $\mathrm{prox}_{th}(X)$ at $X$ along $U\in \R^{n \times p}$ is
\[ [D(X)[U]]_i = \frac{U_i^+}{\|X_i^+\|_2} - \frac{X_i^+(X_i^+)^\top U_i^+}{\|X_i^+\|_2^3},\]
where $U_i^+ =  U_i \odot 1_{X_i>0} + U_i \odot s$, and $s_i$ returns $1$ if $X_{ij} = 0, \; U_i >0$ and $0$ otherwise. Due to the Cauchy inequality, it holds that $$\iprod{(D(X)[U])_i}{U_i} = \left( \|U_i^+\|_2^2\|X_i^+\|^2 - ((X_i^+)^\top U_i^+)^2 \right)/\|X_i^+\|_2^3 \geq 0.$$
Then the elements of generalized Jacobian of $\mathrm{prox}_{th}$ at $X$ are positive semidefinite. Consequently, it is monotone. Analogous to the case above, one can prove the semismooth property of ${\rm prox}_{th}$.
\end{itemize}
\end{proof}
The strong prox-regularity and semismoothness established in the above lemma allow us to design efficient second-order methods for solving the applications in Subsection \ref{sec:motivate-exam}. Corresponding numerical experiments will be conducted in Section \ref{sec:num}.

\section{A projected Semismooth Newton method}\label{sec:ssn}
To solve \eqref{prob}, the proximal gradient method is
\be \label{eq:pgd} x^{k+1} \in \argmin_{x} \; \iprod{\nabla f(x^k)}{x -x^k} + \frac{1}{2t_k}\|x - x^k\|_2^2 + h(x) = {\rm prox}_{t_k h}(x^k - t_k \nabla f(x^k)),  \ee
where $t_k > 0$ is the step size depending on the Lipschitz constant of $\nabla f$. Since $h$ is nonconvex, $\mathrm{prox}_{t_kh}$ is usually a set-valued mapping. To accelerate \eqref{eq:pgd}, the author \cite{yang2017proximal} investigates the techniques of extrapolation and nonmonotone line search.

If $h$ is strongly prox-regular, the operator  $\mathrm{prox}_{th}(x^k - t\nabla f(x^k))$ is single-valued, Lipschitz continuous, and monotone (SLM) whenever $\|t \nabla f(x^k) \| \leq \gamma$ and $x^k$ belongs to the compact set $\mathcal{C}$. To ensure the compactness of the sequence $\{ x^k\}$, one usually investigates the coercive property and the descent property of $\varphi$. Specifically, any level set $\{ x : \varphi(x) \leq \alpha \}$ with $\alpha \in \R$ is compact for a coercive $\varphi$. If the sequence $\{\varphi(x^k)\}$ is decreasing, $\{x^k\} \subset \{x: \varphi(x) \leq \varphi(x^0)\}$ is a compact set. Moreover, the norm $\|\nabla f(x)\|$ is upper bounded over $\{x: \varphi(x) \leq \varphi(x^0) \}$ due to the smoothness. Denote by $L$ the upper bound of $\|\nabla f(x^0)\|$. The proximal operator $\mathrm{prox}_{th}$ is SLM if $t \leq \frac{\gamma}{L}$. For this choice of $t$, we are able to design a second-order method to solve the fixed point equation:
\be \label{eq:residual}  0 = F(x) := x - \mathrm{prox}_{th}(x - t \nabla f(x)), \ee
where $t$ is set as $\min\{\gamma,1\}/L$. It follows the SLM property of $\mathrm{prox}_{th}$ and twice continuous differentiability of $f$ that $F$ is single-valued, Lipschitz continuous, and semismooth.

Since $F$ is semismooth, we propose a semismooth Newton method for solving \eqref{prob}. One typical benefit of second-order methods is the superlinear or faster local convergence rate.  
Specifically, we first solve the linear system
\be \label{eq:ssn} (M^k + \mu_k I) d^k = - F(x^k), \ee
where $M^k \in M(x^k)$ defined by \eqref{eq:jacob-f} is a generalized Jacobian and $\mu_k = \kappa \|F(x^k)\|$ with a positive constant $\kappa$. The semismooth Newton step is then defined as
\be \label{eq:ssn-step} z^{k} = \mathcal{P}_{\mathrm{dom}(h)}(x^k + d^k), \ee
where the projection onto $\mathrm{dom}(h)$ is necessary for the globalization due to the nonconvexity of $h$. We remark that the strong prox-regularity in Definition \ref{assum:h} is crucial for the design of semismooth Newton methods. For a general prox-regular function $h$, we know from Proposition \ref{prop:Lip-prox-map-weakly-convex} that for $v \in \partial h(x)$, the proximal operator $\mathrm{prox}_{t h}$ is a singleton and Lipschitz continuous around $x+tv$ for sufficiently small $t$. Since $\nabla f(x)$ could be far away from $\partial h(x)$, the proximal operator ${\rm prox}_{t h}(x- t \nabla f(x))$ may not be a singleton. On the other hand, a uniform $t$ for all iterates may not exist. This non-singleton property causes difficulty in designing second-order methods.

Note that the pure semismooth Newton step is generally not guaranteed to converge from arbitrary starting points. For globalization, we switch to the proximal gradient step when the semismooth Newton step does not decrease the norm of the residual \eqref{eq:residual} or increases the objective function value to a certain amount. To be specific, the Newton step $z^k$ is accepted if the following conditions are simultaneously satisfied:
\begin{eqnarray}
\|F(z^k)\| & \leq & \nu \rho_k,  \label{eq:decrease-1} \\
\varphi(z^k) & \leq & \varphi(x^k) + \eta \rho_k^{1-q}\|F(z^k)\|^q, \label{eq:increase-1}
\end{eqnarray}
where $\rho_k$ is the normal of the residual of the last accepted Newton iterate until $k$ with an initialization $\rho_0 > 0$, $\eta > 0$, and $\nu, q \in (0,1)$. Otherwise, the semismooth Newton step $z^k$ fails, and we do a proximal gradient step, i.e.,
\be \label{eq:proxg-step} x^{k+1} = \mathrm{prox}_{t h} (x^k - t \nabla f(x^k)) = x^k -  F(x^k). \ee
Due to the choice of $t = \min\{ \gamma,1\}/L$, we will show in the next section that there is a sufficient decrease in the objective function value $\varphi(x^{k+1})$. Under the BD-regularity condition (Any element of $\partial_B F(x^*)$ at the stationary point $x^*$ is nonsingular \cite{qi1993convergence,pang1993nonsmooth}), we show in the next section that the semismooth Newton steps will always be accepted when the iterates are close to the optimal solution.
The detailed algorithm is presented in Algorithm \ref{alg:ssn}.

\begin{algorithm}[htbp]
\caption{A projected semismooth Newton method for solving \eqref{prob}} \label{alg:ssn}
\begin{algorithmic}[1]
\REQUIRE The constants $L>0$, $\gamma >0$, $\nu \in (0,1), q \in (0,1),  \eta > 0, \rho_0 > 0$, $\kappa >0$, and an initial point $x^0\in \mathbb{R}^{n}$, set $k = 0$.
\WHILE {the condition is not met}
\STATE Calculate the semismooth Newton direction $d^k$ by solving the linear equation
$$
(M(x^{k}) + \mu_k I) d^k = - F(x^k).
$$
\STATE Set $z^k = \mathcal{P}_{\mathrm{dom}(h)}(x^k + d^k)$. If the conditions \eqref{eq:decrease-1} and \eqref{eq:increase-1} are satisfied, set $x^{k+1} = z^k$. Otherwise, set $x^{k+1} = x^k -  F(x^k)$.
\STATE Set $k=k+1$.
\ENDWHILE
\end{algorithmic}
\end{algorithm}

\section{Convergence analysis} \label{sec:conver}
In this section, we will present the convergence properties of the proposed projected semismooth Newton method, i.e., Algorithm \ref{alg:ssn}. It consists of two parts,
the global convergence to a stationary point from any starting point and the local superlinear convergence.
\subsection{Global convergence}
First of all, we introduce the following assumptions.
\begin{Assumption} \label{assum:f}
    For problem \eqref{prob},  we assume
    \begin{itemize}
        \item the function $f$ is twice continuously differentiable, its gradient $\nabla f$ is Lipschitz continuous with modulus $L > 0$.
        \item the function $h$ is strongly prox-regular with respect to $\mathcal{C}$ and $\gamma$.
        \item the function $\varphi$ is bounded from below and coercive.
    \end{itemize}
\end{Assumption}

With the above assumption, the proximal gradient step \eqref{eq:proxg-step} leads to a sufficient decrease on $\varphi$.
\begin{Lemma} \label{lem:dec}
Suppose that Assumption \ref{assum:f} holds. Then for any $t_k \in (0, \frac{1}{L}]$
we have
\be \label{eq:dec-proxg} \varphi(x^k) - \varphi(x^{k+1}) \geq \left(\frac{1}{2t_k} - \frac{L}{2} \right) \|x^{k+1} - x^k\|^2. \ee
\end{Lemma}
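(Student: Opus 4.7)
The plan is to combine the standard descent lemma for $f$ (available from the Lipschitz continuity of $\nabla f$) with the minimizer characterization of the proximal gradient update \eqref{eq:proxg-step}. Both ingredients are elementary, so I expect this to be a short two-line calculation rather than a conceptual challenge.

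First I would write down the descent lemma: since $\nabla f$ is $L$-Lipschitz by Assumption \ref{assum:f},
\[
 f(x^{k+1}) \le f(x^k) + \langle \nabla f(x^k),\, x^{k+1}-x^k\rangle + \frac{L}{2}\|x^{k+1}-x^k\|^2.
\]
Next, I would exploit that $x^{k+1}=\mathrm{prox}_{t_k h}(x^k - t_k \nabla f(x^k))$ realizes the minimum of $u\mapsto t_k h(u)+\tfrac{1}{2}\|u-(x^k-t_k\nabla f(x^k))\|^2$. Evaluating this objective at $u=x^{k+1}$ and at $u=x^k$, expanding the squared norm, dividing by $t_k$, and rearranging yields
\[
 h(x^{k+1}) + \langle \nabla f(x^k),\, x^{k+1}-x^k\rangle + \frac{1}{2t_k}\|x^{k+1}-x^k\|^2 \le h(x^k).
\]
Note that this step needs no convexity of $h$; only the fact that $x^{k+1}$ is a minimizer (guaranteed a fortiori by the single-valuedness from strong prox-regularity, although multi-valued selection would also suffice).

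Finally, I would add the two displayed inequalities, cancel the cross term $\langle \nabla f(x^k), x^{k+1}-x^k\rangle$, and regroup to obtain
\[
 \varphi(x^{k+1}) \le \varphi(x^k) - \Bigl(\frac{1}{2t_k}-\frac{L}{2}\Bigr)\|x^{k+1}-x^k\|^2,
\]
which is the claimed estimate (note the coefficient is nonnegative exactly when $t_k\le 1/L$). There is no real obstacle here; the only point worth being careful about is that in the nonconvex setting we rely on $x^{k+1}$ being a (global) minimizer of the proximal subproblem, not merely a stationary point, so the inequality for $h$ is genuinely available; everything else is a routine combination.
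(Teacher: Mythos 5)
Your argument is correct and is essentially the same as the paper's: the paper likewise combines the descent lemma for $f$ with the inequality obtained by comparing the proximal subproblem objective at $x^{k+1}$ and at $x^k$ (what the paper calls ``the optimality of $x^{k+1}$''), then adds the two estimates. Your remark that only global minimality of $x^{k+1}$ in the (possibly nonconvex) subproblem is needed, not convexity of $h$, is a fair clarification of the same step.
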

\begin{proof}
It follows from the optimality of $x^{k+1}$ that
\[ \iprod{\nabla f(x^k)}{x^{k+1} - x^k} + \frac{1}{2t_k}\|x^{k+1} - x^k\|^2 + h(x^{k+1}) \leq h(x^k). \]
By Assumption \ref{assum:f} and $t_k \in (0, \frac{1}{L})$, we have
\[ \begin{aligned}
f(x^{k+1}) + h(x^{k+1}) & \leq f(x^k) +  \iprod{\nabla f(x^k)}{x^{k+1} - x^k} + \frac{L}{2}\|x^{k+1} - x^k\|^2 + h(x^{k+1}) \\
& \leq f(x^k) + h(x^k) + \left(\frac{L}{2} - \frac{1}{2t_k} \right)  \|x^{k+1} - x^k\|^2.
\end{aligned}\]
The proof is completed.
\end{proof}

From the above lemma, the convergence of the proximal gradient method for solving \eqref{prob} can be obtained by the coercive property of $\varphi$. When the projected semismooth Newton update $z^k$ is accepted, the function value $\varphi(z^k)$ may increase while the residual decreases as guaranteed by \eqref{eq:decrease-1} and \eqref{eq:increase-1}. This allows us to show global convergence.
\begin{theorem} \label{thm:global-con}
Let $\{x^{k}\}$ be the iterates generated by Algorithm \ref{alg:ssn}. Suppose that Assumption \ref{assum:f} holds. Let $t_k \equiv  t \in \left(0, \min(\gamma, 1)/L \right]$,
Then we have
\[ \lim_{k \rightarrow \infty} \|F(x^k)\| = 0. \]
\end{theorem}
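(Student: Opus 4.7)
The plan is to exploit the two-regime structure of the algorithm. Write $r_k := \|F(x^k)\|$ and partition the iteration indices into $K_N := \{k : x^{k+1} = z^k\}$ (accepted Newton steps) and its complement $K_P$ (proximal fallbacks). Enumerate $K_N = \{k_1 < k_2 < \cdots\}$ and observe that $\rho_k$ is piecewise constant in $k$: it equals $\rho_0$ up to the first Newton acceptance and then jumps to $\|F(z^{k_j})\|$ right after the $j$-th acceptance.

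First I would use \eqref{eq:decrease-1} recursively. Since $\rho_{k_{j+1}} = \|F(z^{k_j})\|$ for $j \geq 1$ and $\rho_{k_1} = \rho_0$, the acceptance rule $\|F(z^{k_j})\| \leq \nu \rho_{k_j}$ and induction give $\rho_{k_j} \leq \nu^{j-1}\rho_0$ and $\|F(z^{k_j})\| \leq \nu^j \rho_0$. Combined with \eqref{eq:increase-1}, each Newton-accepted iteration raises $\varphi$ by at most
\[ \varphi(x^{k_j+1}) - \varphi(x^{k_j}) \;\leq\; \eta \rho_{k_j}^{1-q}\|F(z^{k_j})\|^q \;\leq\; \eta \nu^q \rho_{k_j} \;\leq\; \eta\nu^q\nu^{j-1}\rho_0, \]
so the total Newton-induced ascent is bounded above by $\eta\nu^q \rho_0/(1-\nu)$.

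Next I would telescope function values. Summing $\varphi(x^k) - \varphi(x^{k+1})$ over $k = 0,\dots,N$ and applying Lemma \ref{lem:dec} on indices $k \in K_P$ (where $\|x^{k+1}-x^k\| = r_k$), together with the Newton ascent bound and $\varphi \geq \inf\varphi > -\infty$, yields
\[ c \sum_{k \in K_P,\, k \leq N} r_k^2 \;\leq\; \varphi(x^0) - \inf\varphi + \frac{\eta\nu^q \rho_0}{1-\nu}, \]
where $c := \frac{1}{2t} - \frac{L}{2} > 0$ for $t < 1/L$. Letting $N \to \infty$ gives $\sum_{k \in K_P} r_k^2 < \infty$, hence $r_k \to 0$ along $K_P$.

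Finally I would combine subsequences. If $K_N$ is finite then $K_P$ contains all sufficiently large indices and we are done. If $K_N$ is infinite, then for $k = k_j+1$ (immediately after a Newton step) $r_k = \|F(z^{k_j})\| \leq \nu^j \rho_0 \to 0$, while for $k$ immediately following a proximal step the Lipschitz continuity of $F$ (which follows from the SLM property of $\mathrm{prox}_{th}$ guaranteed by strong prox-regularity, together with smoothness of $\nabla f$) yields $r_k \leq (1+L_F) r_{k-1}$ with $r_{k-1} \to 0$ along $K_P$; thus $r_k \to 0$ over all indices. The main obstacle is that Newton acceptance controls only the \emph{next} residual while proximal descent controls only the \emph{current} residual; reconciling these requires both the summability of the Newton-induced ascent (from the geometric decay of $\rho_{k_j}$ forced by \eqref{eq:decrease-1}) and the propagation of $K_P$-subsequence convergence to general indices via Lipschitz continuity of $F$.
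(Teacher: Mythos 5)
Your proof is correct. It uses the same three ingredients as the paper's proof of Theorem \ref{thm:global-con} --- the sufficient decrease of Lemma \ref{lem:dec} on proximal steps, the geometric decay of $\rho_k$ forced by \eqref{eq:decrease-1} (which makes the total Newton-induced ascent summable), and the Lipschitz continuity of $F$ --- but assembles them differently. The paper shifts indices via $\|F(x^{k+1})\| \le (L_F+1)\|F(x^k)\|$ so that \emph{both} step types obey a single per-iteration inequality, $\varphi(x^k)-\varphi(x^{k+1}) \ge c_1\|F(x^{k+1})\|^2 - c_2\rho_{k+1}^q$ with the second term present only for $k\in\mathcal{K}_N$; summing over all $k$ then yields the stronger conclusion $\sum_k\|F(x^k)\|^2<\infty$ for the entire sequence, from which the limit is immediate. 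You instead obtain square-summability only over the proximal indices $K_P$ and then patch the remaining indices back in by the case analysis on whether $k-1\in K_N$ or $k-1\in K_P$; that final step is valid (the counter $j$ tends to infinity along $K_N$, and the tail of $K_P$ carries $r_{k-1}\to 0$, with the Lipschitz bound $r_k\le(1+L_F)r_{k-1}$ bridging the gap), but it is extra bookkeeping the paper's index shift avoids, and it delivers only $r_k\to 0$ rather than full square-summability of the residuals, which could matter for rate statements. One minor point you share with the paper: the constant $\tfrac{1}{2t}-\tfrac{L}{2}$ is strictly positive only for $t<1/L$, whereas the theorem permits $t=\min(\gamma,1)/L$, which equals $1/L$ when $\gamma\ge 1$; both arguments implicitly need the strict inequality.
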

\begin{proof}
If $x^{k+1}$ is obtained by the proximal gradient update, it holds from Lemma \ref{lem:dec} that
\be \label{eq:dec1} \varphi(x^k) - \varphi(x^{k+1}) \geq \left( \frac{1}{2t} - \frac{L}{2} \right) \|F(x^k)\|^2. \ee
It follows the Lipschitz properties of $\mathrm{prox}_{th}$ and $\nabla f(x)$ that $F$ is Lipschitz continuous. Let $L_F$ be the Lipschitz constant of $F$. From the triangle inequality, we have
\[ \| F(x^{k+1}) \| \leq \|F(x^k)\| + \|F(x^{k+1}) - F(x^k)\| \leq (L_F + 1)\|F(x^k)\|. \]
Plugging the above inequality into \eqref{eq:dec1} leads to
\be \label{eq:dec2}
\varphi(x^k) - \varphi(x^{k+1}) \geq c_1 \|F(x^{k+1})\|^2,
\ee
where $c_1: = \left( \frac{1}{2t} - \frac{L}{2} \right)\frac{1}{(L_F + 1)^2} > 0$.

If the Newton update $z^k$ is accepted, the conditions \eqref{eq:decrease-1} and \eqref{eq:increase-1} imply that
\[ \begin{aligned}
   \varphi(x^{k}) - \varphi(x^{k+1}) & \geq - \eta \rho_k^{1-q} \|F(x^{k+1})\|^q \\
   & = c_1 \|F(x^{k+1})\|^2 - (c_1\|F(x^{k+1})\|^{2-q} + \eta \rho_{k}^{1-q}) \|F(x^{k+1})\|^q
\end{aligned}  \]
and $\rho_{k+1} = \| F(x^{k+1}) \| \leq \nu \rho_k$. Since $\rho_{k} \in (0, \rho_0)$ for all $k$, $c_1\|F(x^{k+1})\|^{2-q} + \eta \rho_{k}^{1-q}$ is bounded by a constant, denoted by $c_2$. Hence, for the projected semismooth Newton step, it holds
\be \label{eq:dec3} \varphi(x^{k}) - \varphi(x^{k+1}) \geq c_1 \|F(x^{k+1})\|^2 - c_2 \rho_{k+1}^q.
\ee
Combining \eqref{eq:dec2} and \eqref{eq:dec3}, we have
\[ \varphi(x^0) - \varphi(x^{K+1}) = \sum_{i=1}^K(\varphi(x^k) - \varphi(x^{k+1})) \geq c_1\sum_{k=0}^K \|F(x^{k+1})\|^2 - c_2 \sum_{k \in \mathcal{K}_N} \rho_{k+1}^q, \]
where $\mathcal{K}_N \subset \{ 1,2,\ldots, K+1 \}$ consists of the indices where the projected semismooth Newton updates are accepted. It is easy to see that $\sum_{k \in \mathcal{K}_N} \rho_{k+1}^q \leq \rho_0^q \sum_{k=1}^{K+1} \nu^{qk} = \frac{\rho_0^q(1-\nu^{q(K+1)})}{1-\nu} \leq \frac{\rho_0^q }{1-\nu^q}$. Therefore,
\[ c_1 \sum_{k=0}^K \|F(x^{k+1})\|^2 \leq \varphi(x^0) - \varphi(x^{K+1}) + \frac{c_2\rho_0^p}{1-\nu^p}.  \]
Since $\varphi$ is bounded from below, we have
\[ \sum_{k=0}^\infty \|F(x^k)\|^2 \leq \infty, \]
which implies that $\lim_{k \rightarrow \infty}\| F(x^k)\| = 0$. We complete the proof.
\end{proof}

\subsection{Local superlinear convergence}
The local superlinear convergence of the semismooth Newton update has been studied in \cite{qi1993nonsmooth,qi1999survey,xiao2018regularized}. The difficulties in our case lie in the extra nonconvex projection operator $\mathcal{P}_{\mathrm{dom}(h)}$ and the switching conditions \eqref{eq:decrease-1} and \eqref{eq:increase-1}. We make the following assumptions.
\begin{Assumption} \label{assum:local}
Let $\{x^k\}$ be the iterates generated by Algorithm \ref{alg:ssn}.
\begin{itemize}
    \item[{\rm (A1)}] The iterate $x^k$ converges to $x^*$ with $F(x^*) = 0$, as $k \rightarrow \infty$.
    \item[{\rm (A2)}] The Hessian $\nabla^2 f$ is continuous around $x^*$.
    \item[{\rm (A3)}] The mapping $F$ is semismooth at $x^*$. In addition, there exists $C> 0$ such that each element $M \in M(x^*)$ defined by \eqref{eq:jacob-f} is nonsingular with $\|M^{-1}\| \leq C$.
    \item[{\rm (A4)}] 
    The function $\varphi$ is Lipschitz continuous over $\mathrm{dom}(h)$ with modulus $L_{\varphi}$, i.e., for all $x,y \in \mathrm{dom}(h)$,
    \[ |\varphi(x) - \varphi(y)| \leq L_{\varphi}\|x-y\|. \]
\end{itemize}
\end{Assumption}

Since the convergence of $\{\|F(x^k)\|\}$ is proved in Theorem \ref{thm:global-con}, any accumulation point of $\{x^k\}$ has zero residual. The Assumption (A1) reads that the full sequence $\{x^k\}$ is convergent. The Assumption (A2) holds for any twice continuously differentiable $f$. The Assumption (A3) is the standard BD-regularity condition used in \cite{qi1993convergence,pang1993nonsmooth,milzarek2014semismooth,xiao2018regularized}.

For the projection operator $\mathcal{P}_{\mathrm{dom}(h)}$ in Algorithm \ref{alg:ssn}, we prove the following bounded property, \revise{which has also been used in the convergence rate analysis for the generalized power method for the group synchronization problems \cite[Lemma 1]{liu2017discrete} \cite[Proposition 3.3]{liu2017estimation} \cite[Lemma 2]{liu2020unified}.}
\begin{Proposition}
For all $x \in \R^n$ and $y \in \mathrm{dom}(h)$, it holds $\|\mathcal{P}_{\mathrm{dom}(h)} (x) - y\| \leq 2\|x - y\|$.
\end{Proposition}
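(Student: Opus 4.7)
The plan is to reduce the claim to the defining minimality property of the projection and then apply the triangle inequality. The inequality is a classical "2-Lipschitz-like" bound that holds for projections onto arbitrary nonempty closed sets, convex or not, so the nonconvexity of $\mathrm{dom}(h)$ poses no real obstacle.

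First I would denote $p := \mathcal{P}_{\mathrm{dom}(h)}(x)$. The key observation is that by the definition of $\mathcal{P}_{\mathrm{dom}(h)}$ as a nearest-point map, and since the competitor $y$ lies in $\mathrm{dom}(h)$, we have the inequality $\|x-p\| \leq \|x-y\|$. Note that this step does not require convexity of $\mathrm{dom}(h)$; it only requires that $\mathrm{dom}(h)$ is nonempty and closed so that the projection is well-defined (at least set-valued, and any selection works). Under Assumption~\ref{assum:f} and the fact that $h$ is proper and closed, $\mathrm{dom}(h)$ is nonempty (and its closure is taken implicitly when defining the projection), so $p$ is well-defined.

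Next I would combine this with the triangle inequality to get
\begin{equation*}
\|p - y\| \;\leq\; \|p - x\| + \|x - y\| \;\leq\; \|x-y\| + \|x-y\| \;=\; 2\|x-y\|,
\end{equation*}
which is exactly the claimed bound. This finishes the argument.

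There is no substantial obstacle here: the proof is essentially a one-line triangle-inequality computation following the minimality of the projection. The only minor point to be careful about is that, because $\mathrm{dom}(h)$ need not be convex, $\mathcal{P}_{\mathrm{dom}(h)}(x)$ may be set-valued; the statement should then be read as holding for any selection $p\in \mathcal{P}_{\mathrm{dom}(h)}(x)$, and the proof above goes through verbatim for every such selection since it only uses $\|x-p\|\le \|x-y\|$, which holds for any nearest point $p$.
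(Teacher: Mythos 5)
Your proof is correct and follows essentially the same route as the paper's: the triangle inequality combined with the nearest-point property $\|x-\mathcal{P}_{\mathrm{dom}(h)}(x)\|\leq\|x-y\|$ for $y\in\mathrm{dom}(h)$. Your added remark about set-valuedness and taking an arbitrary selection is a sensible clarification that the paper leaves implicit.
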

\begin{proof}
Following the definition of $\mathcal{P}_{\mathrm{dom}(h)}$, we have
\[ \|\mathcal{P}_{\mathrm{dom}(h)}(x) - y\| \leq \| \mathcal{P}_{\mathrm{dom}(h)}(x) - x  \| + \|x - y\| \leq 2\|x - y\|.  \]
\end{proof}

The following lemma shows that the switching conditions \eqref{eq:decrease-1} and \eqref{eq:increase-1} are satisfied by the projected semismooth Newton update when $k$ is large enough.
\begin{lemma} \label{lem:Newton-acc}
Let $\{x^k\}$ be the iterates generated by Algorithm \ref{alg:ssn}. Suppose that Assumptions \ref{assum:f} and \ref{assum:local} hold. Then for sufficiently large $k$, the Newton update $z^k$ is always accepted.
\end{lemma}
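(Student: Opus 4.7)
I plan to verify, for all $k$ sufficiently large, that the projected Newton update $z^k$ satisfies both switching criteria \eqref{eq:decrease-1} and \eqref{eq:increase-1}. Throughout I write $\epsilon_k := \|x^k - x^*\|$, $a_k := \|F(x^k)\|$, and $b_k := \|F(z^k)\|$.

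First, I would establish the superlinear contraction of the Newton correction. Semismoothness at $x^*$ with $F(x^*) = 0$ gives $F(x^k) = M^k(x^k - x^*) + r_k$ where $\|r_k\| = o(\epsilon_k)$. Rearranging $(M^k+\mu_k I)d^k = -F(x^k)$ produces
\[
x^k + d^k - x^* = (M^k + \mu_k I)^{-1}\bigl[\mu_k(x^k - x^*) - r_k\bigr].
\]
BD-regularity from (A3) plus continuity of $\nabla^2 f$ from (A2) force $\|(M^k+\mu_k I)^{-1}\|$ to be uniformly bounded near $x^*$; combined with $\mu_k = \kappa a_k = O(\epsilon_k)$ from Lipschitz continuity of $F$, this gives $\|x^k + d^k - x^*\| = o(\epsilon_k)$. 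Because $F(x^*) = 0$ forces $x^* \in \mathrm{dom}(h)$, the projection bound preceding the lemma propagates this to $\|z^k - x^*\| \leq 2\|x^k + d^k - x^*\| = o(\epsilon_k)$. Combining Lipschitz continuity of $F$ with the BD-regularity lower bound $a_k \geq C_1\epsilon_k$ then yields the central superlinear ratio $b_k = o(a_k)$.

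Next, I would argue that $\rho_k \geq a_k$ for all sufficiently large $k$. Either the previous step accepted a Newton iterate, in which case $\rho_k = a_k$ by the update rule; or $\rho_k$ is a strictly positive constant inherited from the last acceptance while $a_k \to 0$ by Theorem~\ref{thm:global-con}. Condition \eqref{eq:decrease-1} is then immediate from $b_k = o(a_k) \leq o(\rho_k) \leq \nu\rho_k$ for large $k$.

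The main obstacle is verifying \eqref{eq:increase-1}. The naive Lipschitz bound $\varphi(z^k) - \varphi(x^k) \leq L_\varphi\|z^k - x^k\| = O(a_k)$ is too loose against the budget $\eta\rho_k^{1-q}b_k^q$, which shrinks strictly faster than $a_k$ because of the superlinear ratio. I would close the gap by invoking prox-regularity of $\varphi = f + h$ at the critical point $x^*$, which is inherited from smoothness of $f$ and strong prox-regularity of $h$: since $0 \in \partial\varphi(x^*)$, the prox-regular inequality gives $\varphi(x^k) \geq \varphi(x^*) - \tfrac{\rho_\varphi}{2}\epsilon_k^2$, and combining with $\varphi(z^k) \leq \varphi(x^*) + L_\varphi\|z^k - x^*\|$ sharpens the left-hand side to
\[
\varphi(z^k) - \varphi(x^k) \leq L_\varphi\|z^k - x^*\| + \tfrac{\rho_\varphi}{2}\epsilon_k^2 = o(\epsilon_k).
\]
The desired inequality then follows by rate matching: $\rho_k \geq a_k$ gives $\eta\rho_k^{1-q}b_k^q \geq \eta a_k(b_k/a_k)^q$, and the superlinear ratio $b_k/a_k \to 0$ together with the BD-regularity lower bound $b_k \geq C_1^{-1}\|z^k - x^*\|$ drives the comparison. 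This last rate matching is the delicate step, trading the implicit semismoothness rate for $\|z^k - x^*\|$ against the user-chosen exponent $q \in (0,1)$.
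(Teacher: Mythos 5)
Your overall skeleton matches the paper's: a superlinear contraction estimate for the projected Newton step via semismoothness and BD-regularity, the two-sided error bounds $\|x^k-x^*\|\lesssim\|F(x^k)\|$ and $\|z^k-x^*\|\lesssim\|F(z^k)\|$, the observation that $\rho_k\geq\|F(x^k)\|$ eventually, and the factorization $\|z^k-x^*\|=\|z^k-x^*\|^{1-q}\|z^k-x^*\|^{q}$ to match the budget $\eta\rho_k^{1-q}\|F(z^k)\|^{q}$ in \eqref{eq:increase-1}. You also correctly identify that the naive bound $L_\varphi\|z^k-x^k\|=O(\|F(x^k)\|)$ is too loose and that the comparison must run through $\|z^k-x^*\|$.

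The gap is in how you control $\varphi(z^k)-\varphi(x^k)$. The paper simply writes $\varphi(z^k)-\varphi(x^k)\leq\varphi(z^k)-\varphi(x^*)\leq L_\varphi\|z^k-x^*\|$, i.e., it uses $\varphi(x^k)\geq\varphi(x^*)$, after which the two error bounds and a small enough contraction constant close the argument. You instead invoke prox-regularity of $\varphi$ at $x^*$ to get only $\varphi(x^k)\geq\varphi(x^*)-\tfrac{\rho_\varphi}{2}\|x^k-x^*\|^2$, which leaves the extra term $\tfrac{\rho_\varphi}{2}\|x^k-x^*\|^2$ on the left-hand side. That term cannot be absorbed into the budget: with $\|x^k-x^*\|\asymp\|F(x^k)\|$ and $\rho_k\asymp\|F(x^k)\|$ after the first acceptance, you would need $\|F(x^k)\|^{2}\lesssim\|F(x^k)\|^{1-q}\|F(z^k)\|^{q}$, i.e., $\|F(z^k)\|\gtrsim\|F(x^k)\|^{(1+q)/q}$ with $(1+q)/q>2$. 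Superlinear contraction gives only an \emph{upper} bound $\|F(z^k)\|=o(\|F(x^k)\|)$ and no lower bound whatsoever; in the extreme case $z^k=x^*$ the right-hand side of \eqref{eq:increase-1} is exactly $0$ while your left-hand side is still $\tfrac{\rho_\varphi}{2}\|x^k-x^*\|^2>0$. So the "rate matching" you flag as delicate does in fact fail for this term. To close the proof along your lines you would need the stronger statement $\varphi(x^k)\geq\varphi(x^*)$ for large $k$ (which is what the paper uses, without the quadratic correction), not merely prox-regularity of $\varphi$ at $x^*$.
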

\begin{proof}
Let us first define a constant $\gamma_F \in \left(0, \min\left\{ \frac{1}{8C}, \frac{\nu}{32C^2L_F}, \frac{\eta^{\frac{1}{1-q}}}{32C^2(L_{\varphi} 3^q C^q)^{\frac{1}{1-q}}} \right\}\right)$, where $C, \nu, \eta, q, L_F, L_\varphi$ are defined previously. It follows from \cite[Lemma 2.6]{qi1993convergence} and (A3) that
there exists $\varepsilon > 0$ such that for any $x \in \mathbb{B}(x^*,\varepsilon)$ and $M \in M(x)$, 
\be \label{eq:semismooth-local} \|F(x) - F(x^*) - (M + \kappa \|F(x)\| I) (x-x^*)\| \leq \gamma_F \|x - x^*\|,  \; \|(M + \kappa \|F(x)\|I)^{-1}\| \leq 2C. \ee
For the projected semismooth Newton update $z^k = \mathcal{P}_{\mathrm{dom}(h)} (x^k - (M_k + \mu_k I)^{-1} F(x^k))$, it hold that
\be \label{eq:zr-recur} \begin{aligned}
   \|z^k - x^*\| & = \| \mathcal{P}_{\mathrm{dom}(h)} (x^k - (M_k + \mu_k I)^{-1} F(x^k)) - x^*  \|  \\
   & \leq 2 \| (M_k + \mu_k I )^{-1} (F(x^k) - F(x^*) - (M_k + \mu_k I)(x^k - x^*)) \| \\
   & \leq 4 \gamma_F C \|x^k - x^*\|,
\end{aligned}
\ee
where we assume $x^k \in \mathbb{B}(x^*,\varepsilon)$. Due to the choice of $\gamma_F$, we have $z^k \in \mathbb{B}(x^*,\varepsilon)$. Note that
\be \|x^k - x^*\|  \leq \|z^k - x^* \| + \|z^k - x^k\| \leq  4  \gamma_F C \|x^k - x^*\| + 4C \|F(x^k)\|. \ee
Then
\be \label{eq:xdist}\|x^k -x^*\| \leq \frac{4C}{1-4 \gamma_F C} \|F(x^k)\|. \ee
Combining \eqref{eq:zr-recur} and \eqref{eq:xdist} implies
\be \label{eq:zdist} \|z^k - x^*\| \leq \frac{16 \gamma_F C^2 }{1-4\gamma_F C} \|F(x^k)\|. \ee
Hence,
\be \label{eq:residual1}
   \|F(z^k)\| = \|F(z^k) - F(x^*)\| \leq L_F \|z^k - x^*\| \leq \frac{16 \gamma_F C^2 L_F}{1-4\gamma_F C} \|F(x^k)\| \leq \nu \|F(x^k)\|.
\ee
In addition, note that
\[ \begin{aligned}
\|z^k - x^*\| & = \|(M_k + \mu_k I)^{-1}\left(F(z^k) - F(x^*) - (M_k + \mu_k I)(z^k - x^*) - F(z^k)\right)\| \\
& \leq 2 
\gamma_FC\|z^k - x^*\| + 2C \|F(z^k)\|.
\end{aligned}
\]
This gives
\be \label{eq:zdist1} \|z^k - x^*\| \leq \frac{2C}{1-2\gamma_F C}\|F(z^k)\|.\ee
The changes between $\varphi(z^k)$ and $\varphi(x^k)$ can be estimated by
\be \label{eq:psidist}
\begin{aligned}
   \varphi(z^k) - \varphi(x^k) & \leq  \varphi(z^k) - \varphi(x^*)  \leq L_{\varphi} \|z^k -x^*\| \\
   & = L_{\varphi} \|z^k -x^*\|^{1-q} \|z^k-x^* \|^q \\
   & \leq L_{\varphi}  \left(\frac{16 \gamma_F C^2}{1-4\gamma_F C} \right)^{1-q} \left(\frac{2C}{1-2\gamma_F C}\right)^q \|F(x^k)\|^{1-q} \|F(z^k)\|^q \\
   & \leq \eta \|F(x^k)\|^{1-q} \|F(z^k)\|^q.
\end{aligned}\ee
Due to the convergence of residual, for any proximal gradient step index $k_0$, there always exists a $k > k_0$ such that $\|F(x^k)\| \leq \rho_k$. Then all followed iterates are projected semismooth Newton steps because of \eqref{eq:residual1} and \eqref{eq:psidist}. This completes the proof.
\end{proof}

The above lemma establishes the local transition to the projected semismooth Newton step. Utilizing the semismoothness, we have the locally superlinear convergence on the iterates generated by Algorithm \ref{alg:ssn}.
\begin{theorem}
Let $\{x^k\}$ be the iterates generated by Algorithm \ref{alg:ssn}. Suppose that Assumptions \ref{assum:f} and \ref{assum:local} hold. Then there exists a finite $K > 0$, such that for all $k \geq K$, $\{x^k\}$ converges to $x^*$ Q-superlinearly.
\end{theorem}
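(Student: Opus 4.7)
The plan is to combine Lemma \ref{lem:Newton-acc} (which guarantees that beyond some index $K$ only projected semismooth Newton steps are taken) with the standard semismooth Newton error analysis, adapted to handle the extra projection $\mathcal{P}_{\mathrm{dom}(h)}$. First, I would invoke Lemma \ref{lem:Newton-acc} to conclude that there exists a finite $K$ with $x^{k+1}=z^k=\mathcal{P}_{\mathrm{dom}(h)}(x^k-(M_k+\mu_k I)^{-1}F(x^k))$ for every $k\geq K$. Because $F(x^*)=0$ implies $x^*=\mathrm{prox}_{th}(x^*-t\nabla f(x^*))\in\mathrm{dom}(h)$, the projection estimate of the preceding proposition applies with $y=x^*$, giving
\[
\|x^{k+1}-x^*\| \;\leq\; 2\,\bigl\|x^k-(M_k+\mu_k I)^{-1}F(x^k)-x^*\bigr\|.
\]

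Next, I would rewrite the right-hand side by factoring the inverse:
\[
x^k-(M_k+\mu_k I)^{-1}F(x^k)-x^* \;=\; (M_k+\mu_k I)^{-1}\bigl[(M_k+\mu_k I)(x^k-x^*) - F(x^k)+F(x^*)\bigr].
\]
The BD-regularity assumption (A3), combined with the local bound $\|(M_k+\mu_k I)^{-1}\|\leq 2C$ stated in \eqref{eq:semismooth-local}, controls the inverse uniformly once $x^k$ is close enough to $x^*$; this is supplied by (A1). Semismoothness of $F$ at $x^*$ (assumption (A3)) then yields
\[
\|F(x^k)-F(x^*)-M_k(x^k-x^*)\| \;=\; o(\|x^k-x^*\|),
\]
while the regularization contributes at most $\|\mu_k(x^k-x^*)\|=\kappa\|F(x^k)\|\,\|x^k-x^*\|\leq \kappa L_F\|x^k-x^*\|^2=o(\|x^k-x^*\|)$ by the Lipschitz continuity of $F$ and $F(x^*)=0$.

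Putting the three pieces together gives
\[
\|x^{k+1}-x^*\| \;\leq\; 4C\,\bigl(o(\|x^k-x^*\|)+\kappa L_F\|x^k-x^*\|^2\bigr) \;=\; o(\|x^k-x^*\|),
\]
which is exactly Q-superlinear convergence. The only delicate point, beyond routine bookkeeping, is ensuring that the projection factor of $2$ does not spoil the superlinear rate: this is handled because the quantity inside the projection is already $o(\|x^k-x^*\|)$, so any fixed multiplicative constant is absorbed. I expect this to be the main (though mild) obstacle; all other steps are direct consequences of (A2)--(A3), the strong prox-regularity of $h$ (which ensures $F$ is Lipschitz and semismooth so the $o(\cdot)$ estimate is legitimate), and the inclusion $x^*\in\mathrm{dom}(h)$ that makes the projection bound applicable.
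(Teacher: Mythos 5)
Your proposal is correct and follows essentially the same route as the paper: invoke Lemma \ref{lem:Newton-acc} for the local transition, apply the factor-$2$ projection bound together with $\|(M_k+\mu_k I)^{-1}\|\leq 2C$ to get the constant $4C$, and conclude by semismoothness. You are in fact slightly more explicit than the paper, which absorbs the regularization term $\mu_k(x^k-x^*)$ into the $o(\|x^k-x^*\|)$ estimate without spelling out the bound $\kappa L_F\|x^k-x^*\|^2$ that you provide.
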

\begin{proof}
From Lemma \ref{lem:Newton-acc}, there exists a $K$ such that the projected semismooth Newton update is accepted for $k \ge K$. It follows from the semismoothness of $F$ that
\[ \begin{aligned}
\|x^{k+1} - x^k\| & = \|\mathcal{P}_{\mathrm{dom}(h)} (x^k - (M_k + \mu_k I)^{-1} F(x^k)) - x^* \|  \\
& \leq 4C \| F(x^k) - F(x^*) - (M_k+ \mu_k I )(x^k -  x^*) \| \\
& = o(\|x^k - x^*\|),
\end{aligned}
\]
which means $\{x^k\}$ converges to $x^*$ Q-superlinearly.
\end{proof}

\section{Numerical experiments}\label{sec:num}
In this section, some numerical experiments are presented to evaluate the performance of our proposed Algorithm \ref{alg:ssn}, denoted by ProxSSN. We compare ProxSSN with the existing methods including  AManPG and ARPG \cite{huang2021riemannian}. We also test the proximal gradient descent method (ProxGD for short) as in \eqref{eq:pgd}. Here, a nonmonotone line search with Barzilai–Borwein (BB) step size \cite{barzilai1988two} is used for acceleration. Let $s^k = x^{k} - x^{k-1}$ and $y^k = \nabla f(x^k) - \nabla f(x^{k-1})$. The BB step sizes are defined as
\be \label{eq:bb}
\beta_k^1 = \frac{\iprod{s^k}{s^k}}{|\iprod{s^k}{y^k}|}, \; \text{and} \; \beta_k^2 = \frac{|\iprod{s^k}{y^k}|}{\iprod{y^k}{y^k}}.
\ee
Given $\varrho, \delta \in (0,1)$, the nonmonomote Armijo line search is to find the smallest nonnegative integer $\ell$ satisfying
\be \label{eq:ls} \varphi(\mathrm{prox}_{t_k(\ell)h}(x^k - t_k(\ell) \nabla f(x^k) )) \leq C_k + \frac{\varrho}{2t_k(\ell) } \|\mathrm{prox}_{t_k(\ell) h}(x^k - t_k(\ell) \nabla f(x^k) ) - x_k \|^2. \ee
Here, $t_k(\ell):= \beta_k \delta^{\ell}$, $\beta_k$ is set to $\beta_k^1$ and $\beta_k^2$ alternatively, and the reference value $C_k$ is calculated via $C_k = (\varpi Q_{k-1}C_{k-1} + \varphi(x^k))/Q_k$ where $\varpi \in [0,1]$, $C_0 = \varphi(x^0), \; Q_{k} = \varpi Q_{k-1} + 1$ and $Q_0 = 1$. Once $\ell$ is obtained, we set $t_k = \beta_k \delta^\ell$ and the next iterate is then given by $x^{k+1} = \mathrm{prox}_{t_k h}(x^k - t_k \nabla f(x^k))$.

The reasons of not using ManPG \cite{chen2018proximal}, RPG \cite{huang2021riemannian} or the algorithms proposed in \cite{Lai2014A,Kovna2015} is that their performance can not measure up with AManPG or ARPG in tests of \cite{huang2021riemannian}. For ARPG and AManPG, we use the code provided by \cite{huang2021riemannian}\footnote{all codes are available at \url{https://www.math.fsu.edu/~whuang2/files/RPG_v0.2.zip} }. The codes were written in MATLAB and run on a standard PC with 3.00 GHz AMD R5 microprocessor and 16GB of memory.  The reported time is wall-clock time in seconds.

\subsection{Sparse principal component analysis}\label{sec:spca}
In this subsection, we consider the sparse PCA problem \eqref{prob:spca}, which can be regarded as  a nonsmooth problem on the oblique manifold. Let $f(X): = \|X^TA^TAX - D^2 \|_F^2$. AManPG solves the following subproblem in each iteration:
\begin{equation*}
    \eta_{X^k} = \argmin_{\eta\in T_{X^k}\mathrm{Ob}(n,p)} \left< \mathrm{grad}f(X^k),\eta  \right> + \frac{\tilde{L}}{2}\|\eta\|_F^2 + \lambda \|X^k + \eta\|_1,
\end{equation*}
where $\tilde{L}>L$ with $L$ being the Lipschitz constant of $f$, $\mathrm{grad}f(X^k)$ denotes the Riemannian gradient of $f$ at $X^k$, and $T_{X}\mathrm{Ob}(n,p)$ is the tangent space to $\mathrm{Ob}(n,p)$ at $X$. We refer to \cite{chen2018proximal} for more details. In the $k$-th iteration of ARPG, one need to solve the subproblem:
\begin{equation*}
    \eta_{X^k} = \argmin_{\eta\in T_{X^k}\mathrm{Ob}(n,r)} \left< \mathrm{grad}f(X^k),\eta  \right> + \frac{\tilde{L}}{2}\|\eta\|_F^2 + \lambda\|\mathcal{R}_{X^k}(\eta)\|_1,
\end{equation*}
where $\mathcal{R}$ denotes a retraction operator on $\mathrm{Ob}(n,p)$. The termination condition of both AManPG and ARPG is as follows:
\begin{equation}\label{kkt3}
    \|\tilde{L} \eta_{X^k}\|^2 \leq \mathrm{tol},
\end{equation}
where $\mathrm{tol} > 0$ is a given tolorance. The ProxGD and ProxSSN methods are applied to solve problem \eqref{prob:spca} by setting $f(X): = \|X^TA^TAX - D^2 \|_F^2, \; h(X) = \lambda \|X\|_1 + \delta_{{\rm{Ob}(n,p)}}(X)$. ProxGD has the following update rule
\begin{equation*}
    X^{k+1}  = \mathrm{prox}_{t_kh}(X^k - t_k \nabla f(X^k)).
\end{equation*}
The following relative KKT condition  is set as a stopping criterion for our algorithm and ProxGD:
\begin{equation}\label{equ:stop kkt}
  \begin{aligned}
     {\rm err}:=\frac{\left\| X^k - \mathrm{prox}_{t_kh}(X^k -  t_k\nabla f(X^k)) \right\|_F}{t_k(1+\|X^k\|_F)} \leq \mathrm{tol}.
  \end{aligned}
\end{equation}
Note that $t_k$ is fixed in ProxSSN.
\paragraph{Implementation details}The parameters of AManPG and ARPG are set the same as in \cite{huang2021riemannian}. For ProxSSN, we set $q = 20,\nu = 0.9999,\eta = 10^{-6}, t = 1/\lambda_{\max}(A^TA)$, and the initial value $\kappa = 1$.  The maximum number of iterations is 10000. 
The starting point of all algorithms is the leading $p$ right singular vectors of the matrix $A$.   Due to the evaluation criterion being different for different algorithms, we first run ARPG when \eqref{kkt3} is satisfied with $\mathrm{tol} = 10^{-10}nr$ or the number of iterations exceeds 10000, and denote $F_{\rm ARPG}$ as the obtained objective value. The other algorithms are terminated when the objective value satisfies $F(X^k)\leq F_{\rm ARPG} + 10^{-6}$ or \eqref{kkt3} (or \eqref{equ:stop kkt}) is satisfied with $\mathrm{tol} = 10^{-10}np$, or the number of iterations exceeds 10000.

In our experiments, the data matrix $A\in\mathbb{R}^{m\times n}$ is produced by MATLAB function $randn(m, n)$, in which all entries of $A$ follow the standard Gaussian distribution. Next, we shift the columns of $A$ such that they have zero-mean, and normalize the resulting matrix by its spectral norm.

\subsubsection{Numerical results} In Figure \ref{fig:perf_spca_time}, we present the trajectories of the objective function values with respect to the wall-clock time for the cases of $n=300$ and $n=400$, where $\varphi_{\min}$ is the minimum objective value of all algorithms in the iterative process. It can be seen that our proposed ProxSSN converges fastest among all algorithms. AManPG and ARPG have comparable performances. 
Figures \ref{fig:perf_spca_n} and \ref{fig:perf_spca_r}  shows the performance of all algorithms under different $n,p$. We see that all algorithms have similar objective values, but the consuming time of ProxSSN is the least. We present the wall-clock time in the column ``time'' and the objective function value in the column ``obj'' in Table \ref{tab:spca} for different combinations of $m,n, p$, where similar conclusions can be drawn. 

We also compare the accuracy and efficiency of ProxSSN with other algorithms using the performance profiling method proposed in \cite{dolan2002benchmarking}.
Let $t_{i,s}$ be some performance quantity (e.g. the wall-clock time or the gap between the obtained objective function value and $\varphi_{\min}$, lower is better) associated with the $s$-th solver on problem $i$.
Then, one computes the ratio $r_{i,s}$ as $t_{i,s}$ over the smallest value obtained by $n_s$ solvers on problem $i$, i.e., $r_{i,s} :=\frac{t_{i,s}}{\min\{t_{i,s}: 1\leq s \leq n_s\}}$. For $\tau >0$, the value
\[
\pi_{s}(\tau) : = \frac{\text{number of problems where } \log_2(r_{i,s}) \leq \tau}{\text{total number of problems}}
\]
indicates that solver $s$ is within a factor $2^\tau \geq 1$ of the performance obtained by the best solver. Then the performance plot is a curve $\pi_s(\tau)$ for each solver $s$ as a function of $\tau$.  In Figure \ref{fig:profile_spca_time}, we show the performance profiles of the criterion, the wall-clock time and the gap in the objective function values. In particular, the intercept point of the axis ``ratio of problems'' and the curve in each subfigure is the percentage of the faster one among the four solvers. These figures show that both the wall-clock time and the gap in the objective function values of ProxSSN are much better than other algorithms on most problems.

\begin{figure}[!htb]
\centering
\subfigure{
\includegraphics[width=0.45\textwidth]{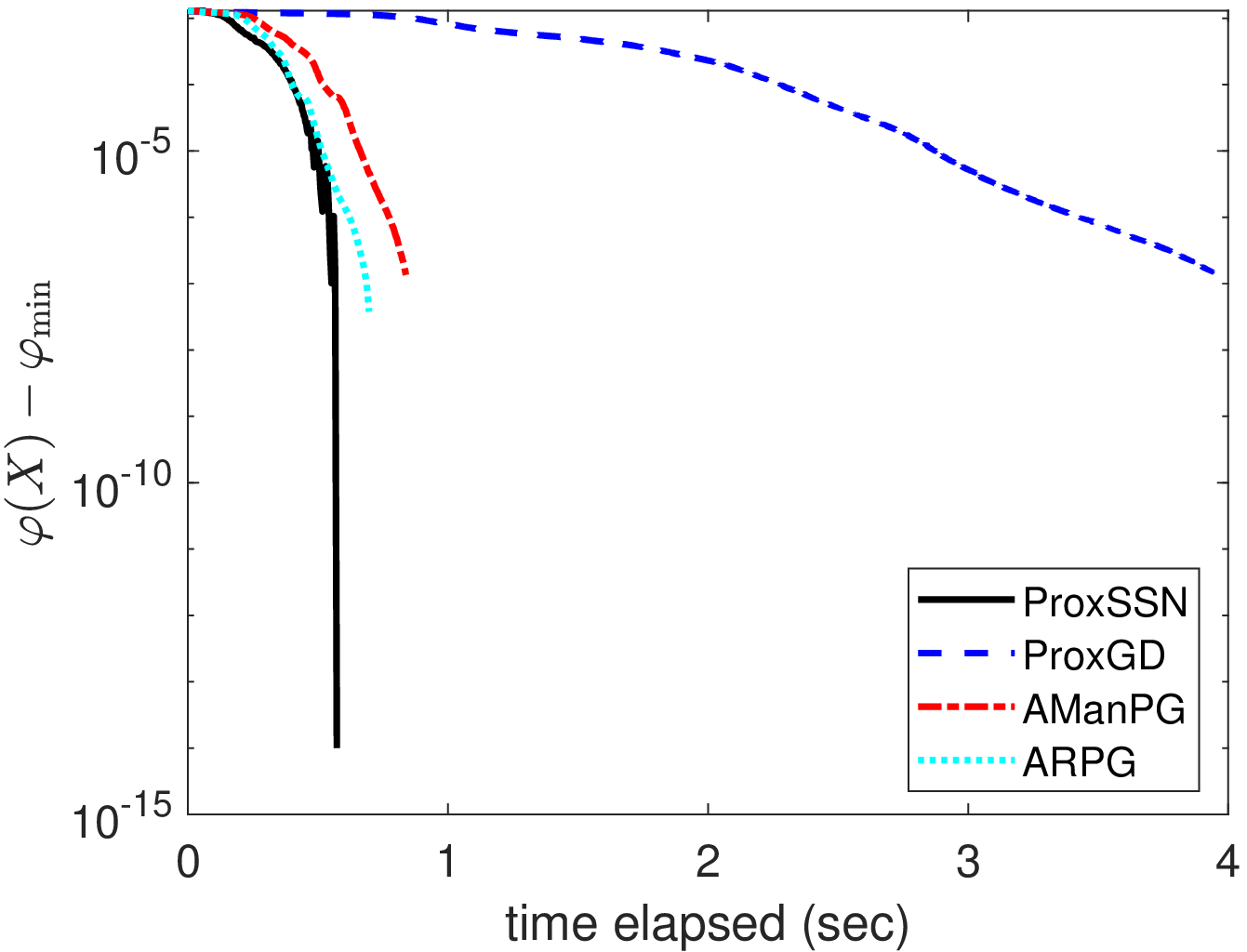}}
\subfigure{
\includegraphics[width=0.45\textwidth]{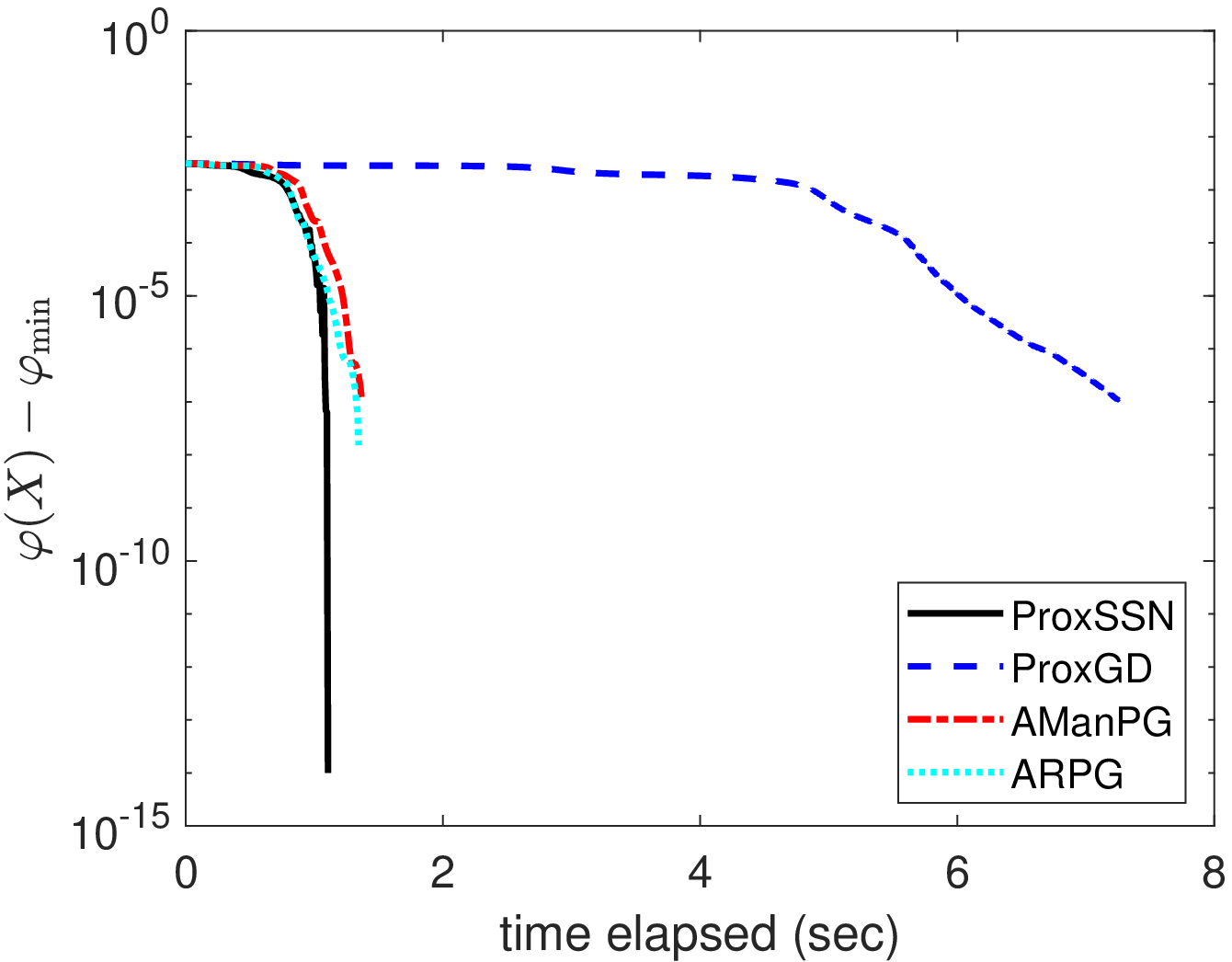}}
\caption{The trajectories of the objective function values with respect to the wall-clock time on the sparse PCA problem \eqref{prob:spca} with $p = 10,\lambda = 0.01$. Left: $n=300$; right: $n=400$}\label{fig:perf_spca_time}
\end{figure}

\begin{figure}[!htb]
\centering
\subfigure{
\includegraphics[width=0.45\textwidth]{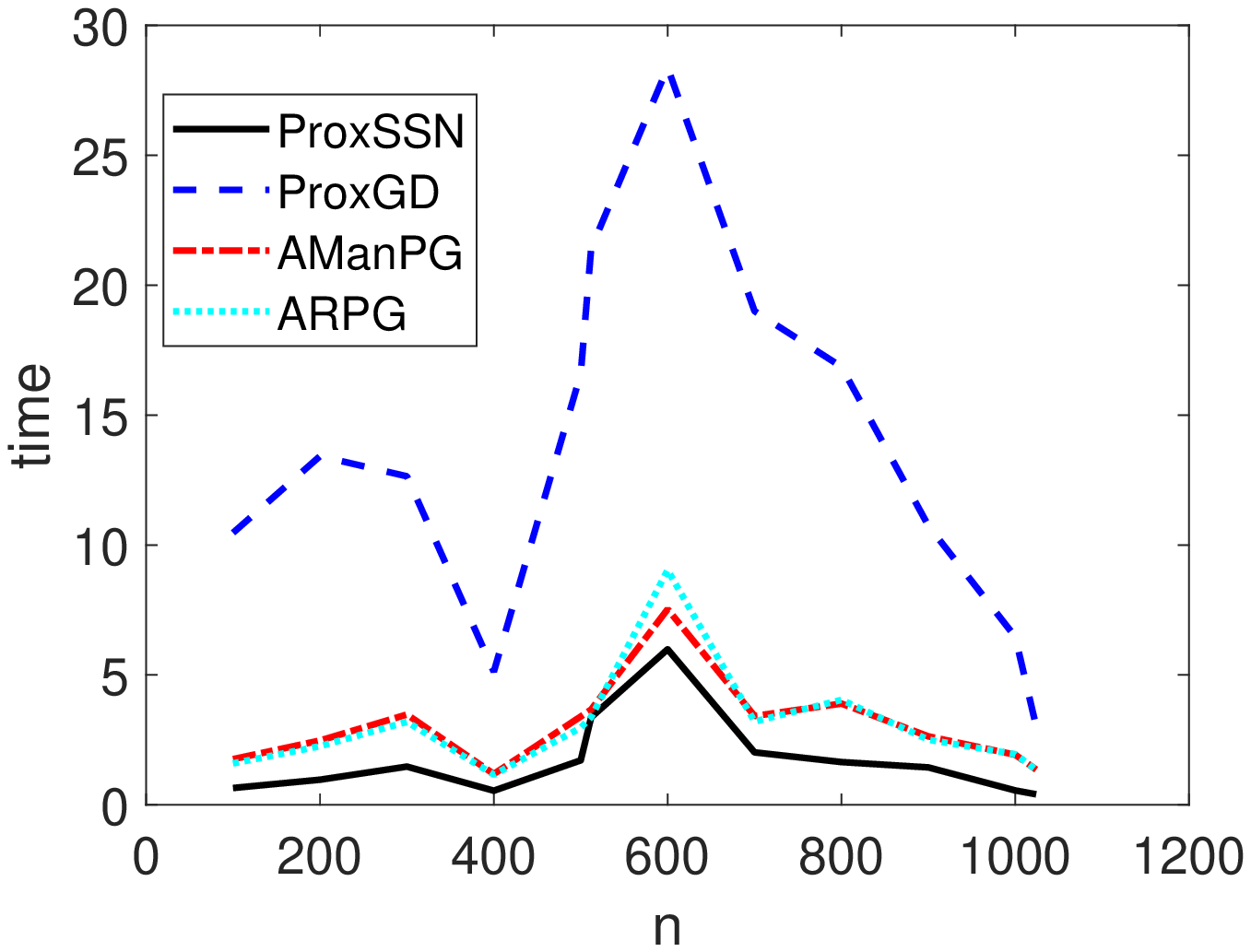}}
\subfigure{
\includegraphics[width=0.45\textwidth]{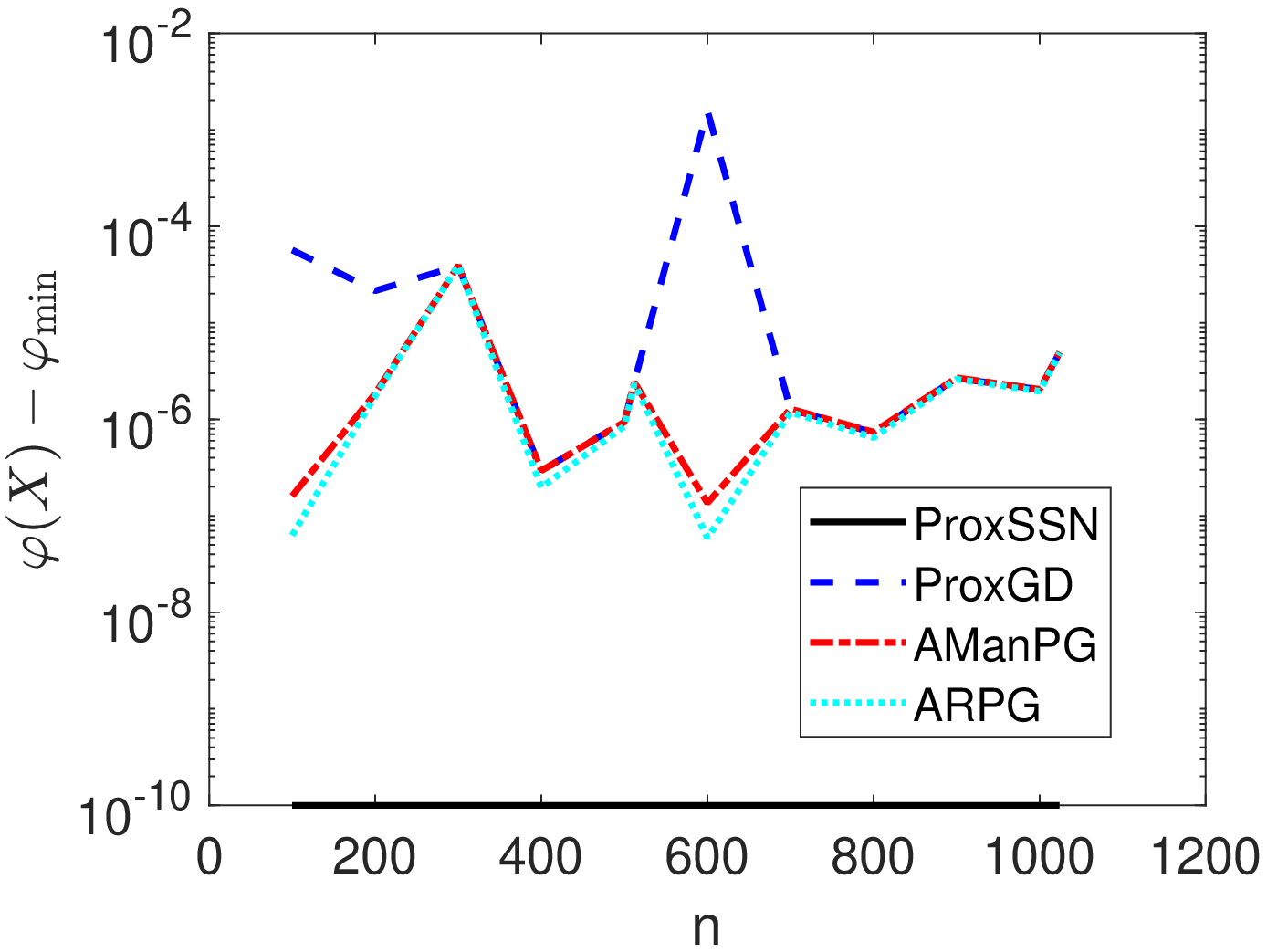}}
\caption{Comparisons of wall-clock time and the objective function values on the sparse PCA problem \eqref{prob:spca} with $p = 20,\lambda = 0.01$ for different $n$.}\label{fig:perf_spca_n}
\end{figure}

\begin{figure}[!htb]
\centering
\subfigure{
\includegraphics[width=0.45\textwidth]{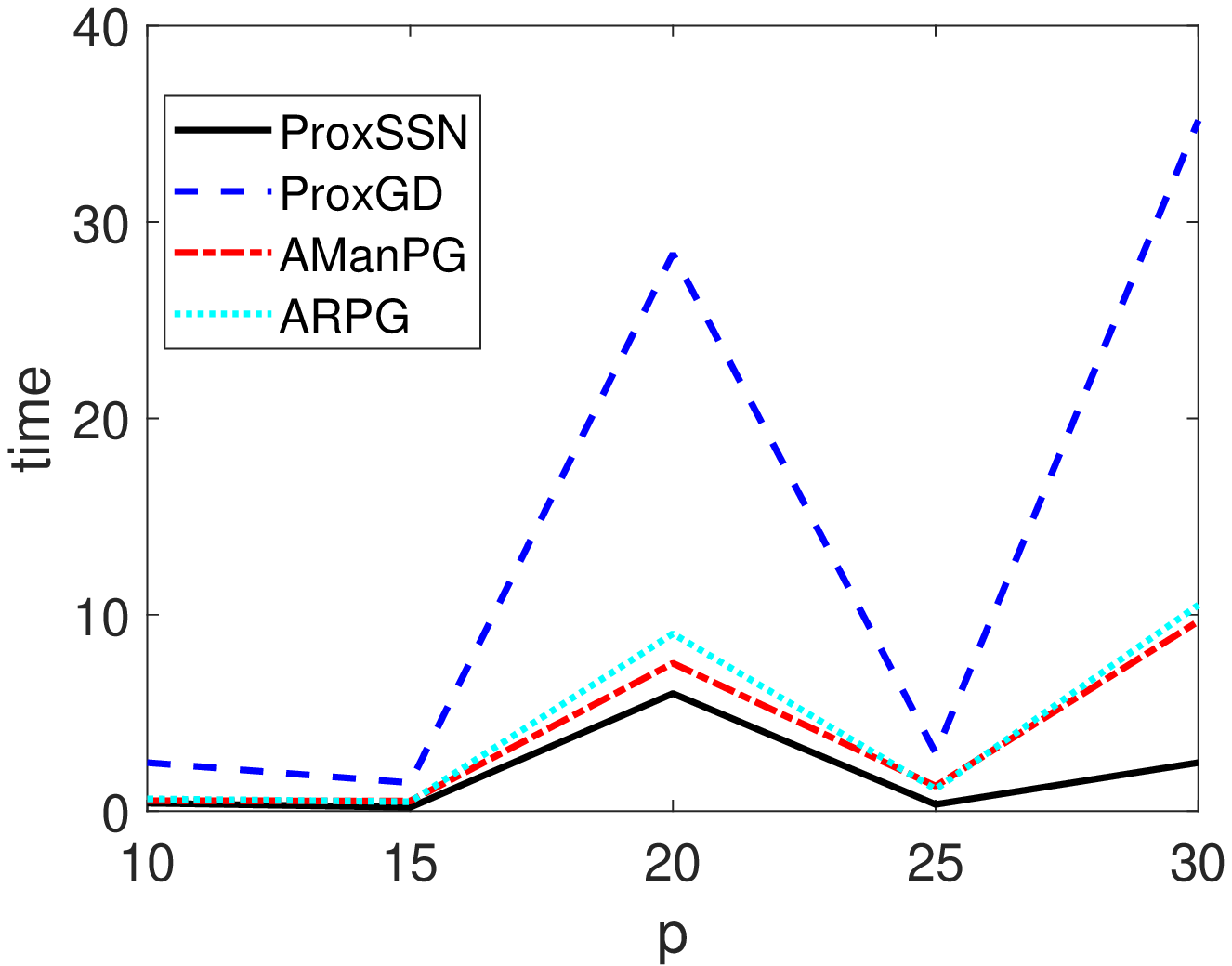}}
\subfigure{
\includegraphics[width=0.45\textwidth]{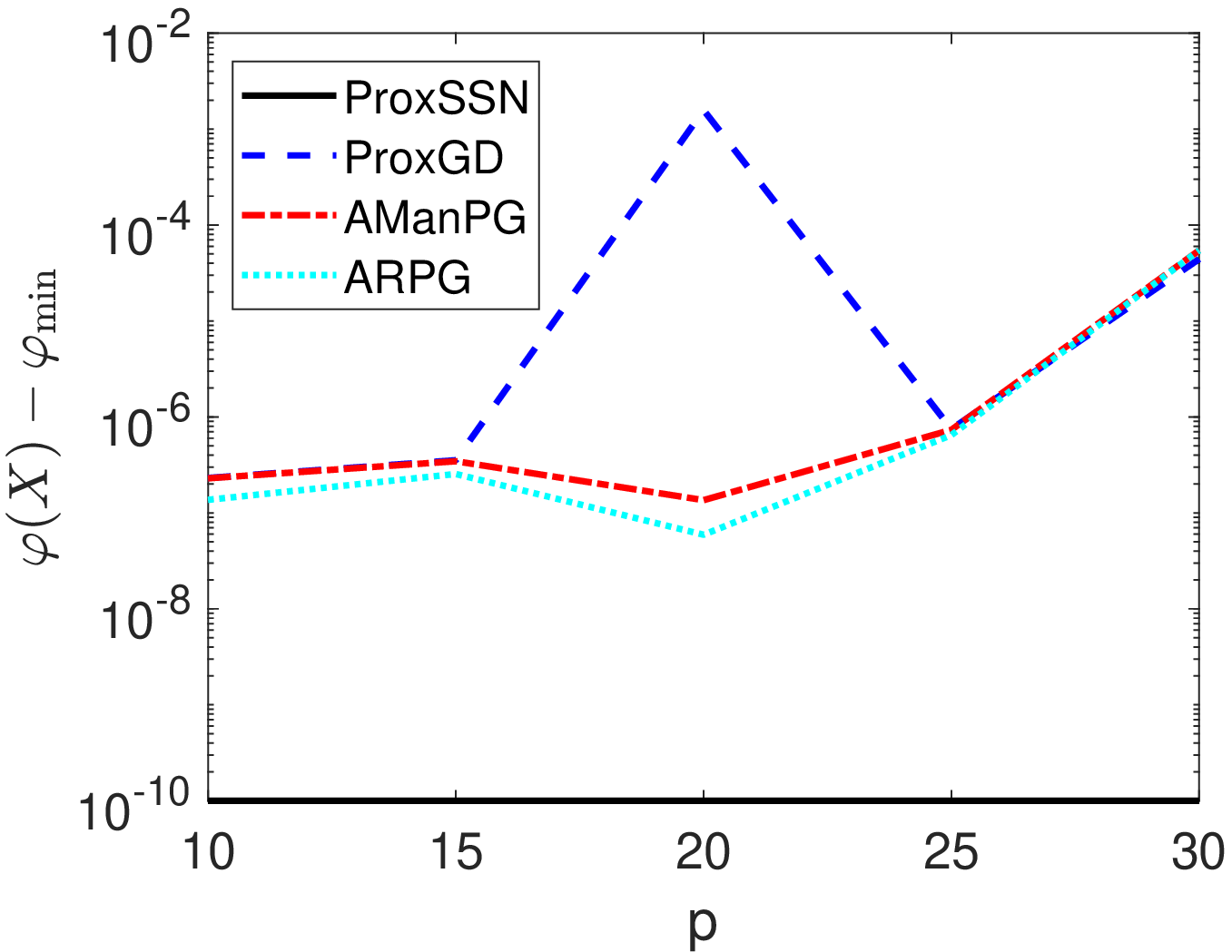}}
\caption{Comparisons of wall-clock time and the objective function values on the sparse PCA problem \eqref{prob:spca} with $n = 512,\lambda = 0.01$ for different $p$.}\label{fig:perf_spca_r}
\end{figure}

\begin{figure}[!htb]
\centering
\subfigure{
\includegraphics[width=0.45\textwidth]{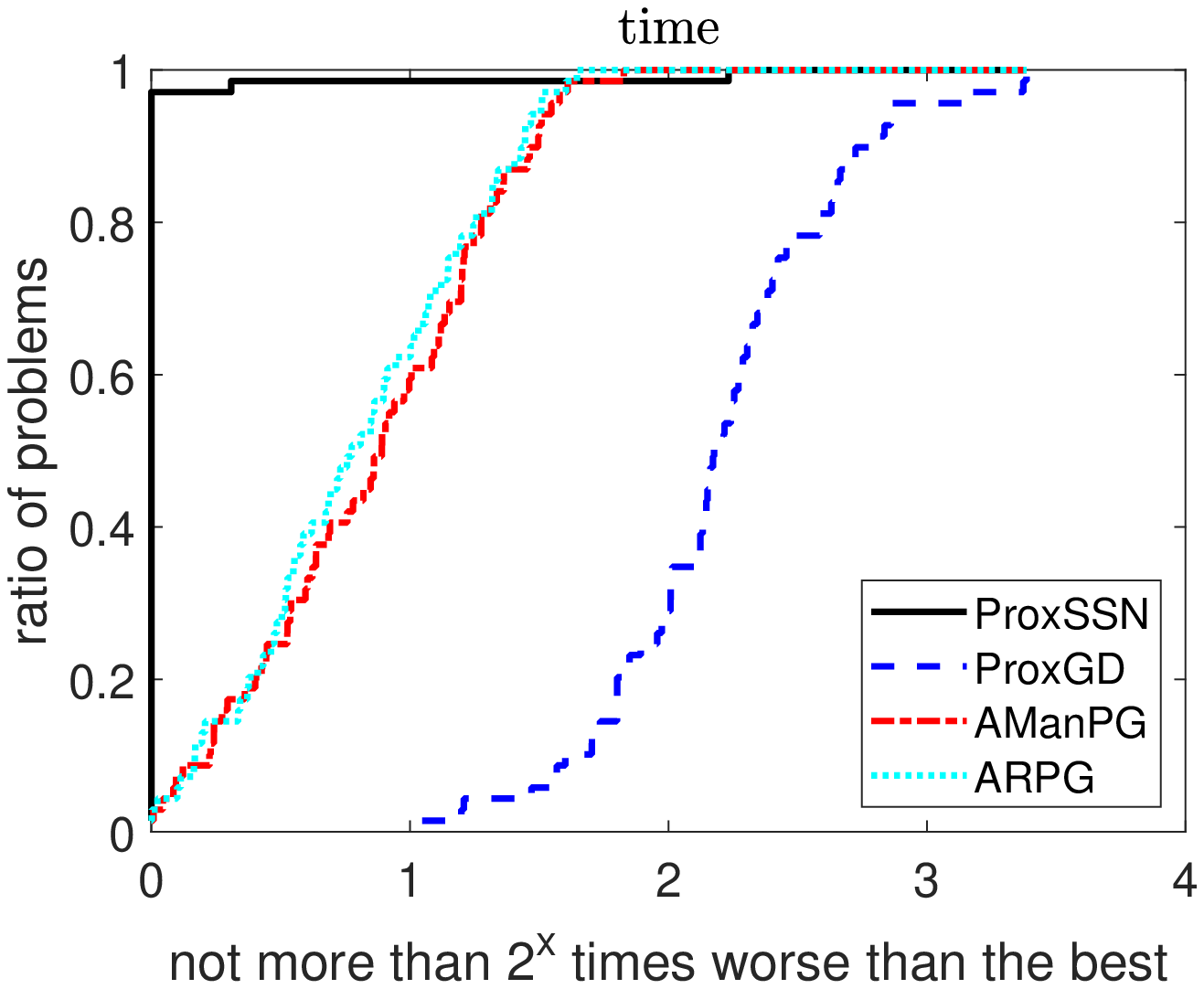}}
\subfigure{
\includegraphics[width=0.45\textwidth]{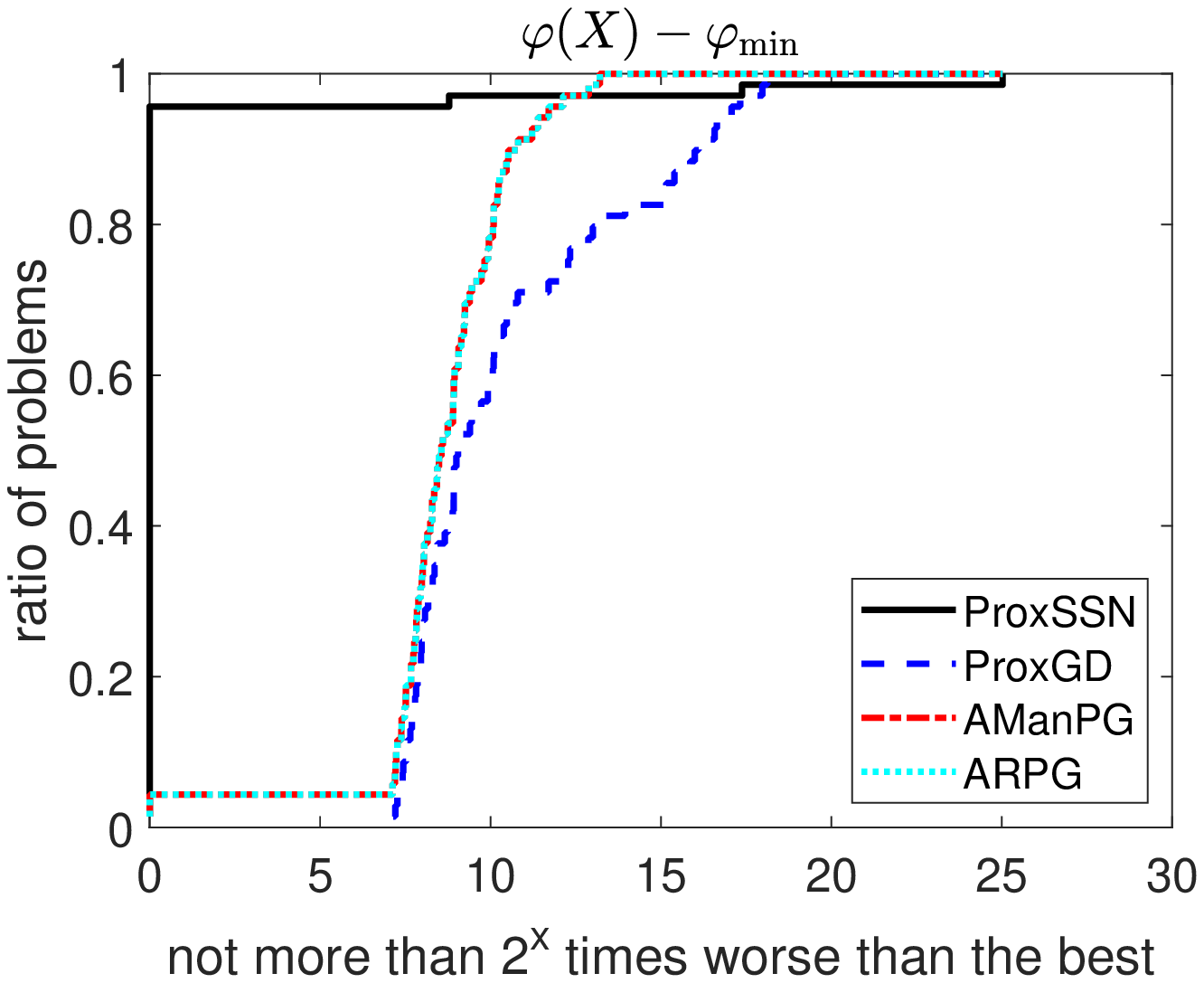}}
\caption{The performance profiles on the sparse PCA problem \eqref{prob:spca}.}\label{fig:profile_spca_time}
\end{figure}

\begin{footnotesize}
\setlength{\tabcolsep}{3pt}
\centering
\begin{longtable}{|c|cc|cc|cc|cc|}
\caption{Computational results of oblique SPCA}\label{tab:spca}\\ 
\hline
\multirow{2}{*}{$(m,n,p)$} & \multicolumn{2}{c|}{ProxSSN} & \multicolumn{2}{c|}{ProxGD} & \multicolumn{2}{c|}{AManPG} & \multicolumn{2}{c|}{ARPG}    \\ \cline{2-9}
     & time   &  obj  & time & obj   & time & obj   & time & obj      \\ \hline
\endfirsthead
\hline
\multirow{2}{*}{$(m,n,p)$} & \multicolumn{2}{c|}{ProxSSN} & \multicolumn{2}{c|}{ProxGD} & \multicolumn{2}{c|}{AManPG} & \multicolumn{2}{c|}{ARPG}    \\ \cline{2-9}
     & time   &  obj  & time & obj   & time & obj   & time & obj      \\ \hline
\endhead
\hline
\endfoot

 100 / 500 /  10 &  1.58 & 1.28380   &  13.59 & 1.28380  &  1.99 & 1.28380  &  1.75 & 1.28380  \\ \hline
 100 / 500 /  15 &  1.16 & 1.85986   &  7.05 & 1.85986  &  1.82 & 1.85986  &  1.696 & 1.85986  \\ \hline
 100 / 500 /  20 &  1.71 & 2.44963   &  16.59 & 2.44963  &  3.40 & 2.44963  &  2.96 & 2.44963  \\ \hline
 100 / 500 /  25 &  2.36 & 3.00555   &  15.66 & 3.00555  &  4.05 & 3.00555  &  3.97 & 3.00555  \\ \hline
 100 / 500 /  30 &  0.84 & 3.58139   &  12.15 & 3.58139  &  3.21 & 3.58139  &  3.16 & 3.58139  \\ \hline
 100 / 600 /  10 &  0.40 & 1.39524   &  2.47 & 1.39524  &  0.53 & 1.39524  &  0.62 & 1.39524  \\ \hline
 100 / 600 /  15 &  0.19 & 2.04237   &  1.45 & 2.04237  &  0.51 & 2.04237  &  0.48 & 2.04237  \\ \hline
 100 / 600 /  20 &  5.98 & 2.68717   &  28.37 & 2.68875  &  7.53 & 2.68717  &  9.03 & 2.68717  \\ \hline
 100 / 600 /  25 &  0.34 & 3.31583   &  2.96 & 3.31583  &  1.27 & 3.31583  &  1.11 & 3.31583  \\ \hline
 100 / 600 /  30 &  2.47 & 3.93575   &  35.16 & 3.93579  &  9.68 & 3.93580  &  10.49 & 3.93580  \\ \hline
 100 / 700 /  10 &  1.50 & 1.50657   &  8.51 & 1.50657  &  1.65 & 1.50657  &  1.68 & 1.50657  \\ \hline
 100 / 700 /  15 &  0.60 & 2.21769   &  2.61 & 2.21769  &  0.80 & 2.21769  &  0.84 & 2.21769  \\ \hline
 100 / 700 /  20 &  2.02 & 2.92664   &  19.00 & 2.92664  &  3.42 & 2.92664  &  3.20 & 2.92664  \\ \hline
 100 / 700 /  25 &  2.22 & 3.59936   &  21.64 & 3.59936  &  5.04 & 3.59936  &  4.55 & 3.59936  \\ \hline
 100 / 700 /  30 &  2.44 & 4.23529   &  42.76 & 4.23540  &  6.04 & 4.23529  &  5.07 & 4.23529  \\ \hline
 100 / 800 /  10 &  0.27 & 1.60610   &  1.64 & 1.60610  &  0.45 & 1.60610  &  0.53 & 1.60610  \\ \hline
 100 / 800 /  15 &  0.46 & 2.36806   &  4.67 & 2.36806  &  0.87 & 2.36806  &  0.91 & 2.36806  \\ \hline
 100 / 800 /  20 &  1.64 & 3.09902   &  16.86 & 3.09902  &  3.89 & 3.09902  &  4.03 & 3.09902  \\ \hline
 100 / 800 /  25 &  1.38 & 3.82806   &  19.20 & 3.82806  &  4.08 & 3.82806  &  3.98 & 3.82806  \\ \hline
 100 / 800 /  30 &  5.77 & 4.55643   &  41.61 & 4.55681  &  13.49 & 4.55644  &  12.97 & 4.55644  \\ \hline
 100 / 900 /  10 &  0.76 & 1.71069   &  4.21 & 1.71069  &  0.80 & 1.71069  &  0.90 & 1.71069  \\ \hline
 100 / 900 /  15 &  0.28 & 2.51949   &  2.68 & 2.51949  &  0.93 & 2.51949  &  0.88 & 2.51949  \\ \hline
 100 / 900 /  20 &  1.44 & 3.28293   &  10.74 & 3.28294  &  2.63 & 3.28294  &  2.50 & 3.28294  \\ \hline
 100 / 900 /  25 &  1.66 & 4.09218   &  22.07 & 4.09218  &  6.50 & 4.09218  &  6.23 & 4.09218  \\ \hline
 100 / 900 /  30 &  3.70 & 4.81562   &  38.57 & 4.81896  &  13.25 & 4.81563  &  13.80 & 4.81563  \\ \hline
 100 / 1000 /  10 &  1.42 & 1.80718   &  10.43 & 1.80718  &  1.81 & 1.80718  &  1.69 & 1.80718  \\ \hline
 100 / 1000 /  15 &  2.38 & 2.64274   &  19.57 & 2.64274  &  3.65 & 2.64274  &  3.45 & 2.64274  \\ \hline
 100 / 1000 /  20 &  0.55 & 3.47447   &  6.46 & 3.47447  &  1.92 & 3.47447  &  1.94 & 3.47447  \\ \hline
 100 / 1000 /  25 &  2.23 & 4.25629   &  29.63 & 4.25629  &  6.83 & 4.25629  &  6.84 & 4.25629  \\ \hline
 100 / 1000 /  30 &  5.37 & 5.08015   &  44.92 & 5.08103  &  17.01 & 5.08015  &  15.79 & 5.08015  \\ \hline

\end{longtable}
\end{footnotesize}

\subsection{Sparse least square regression}
In this subsection, we consider the sparse least-square problem \eqref{prob:slr1}, which can be regarded as a nonsmooth problem on the oblique manifold. We test the same algorithms as in subsection \ref{sec:spca} for the comparisons. All parameters and strategies follow the setup discussed in the last subsection except  $\mathrm{tol} = 10^{-10}nm$.  The numerical results are presented in Figures \ref{fig:perf_lsr_time}-\ref{fig:profile_lsr_time}. In general,
the overall performance of different methods is similar to the results shown in the last subsection. It is clear that ProxSSN is the fastest method for solving problem \eqref{prob:slr1}, both in terms of the objective function value and the wall-clock time. Table \ref{tab:lsr} shows the detailed results for different combinations of $m,n$. We see that ProxSSN compares favorably with the other algorithms and outperforms the first-order algorithm ProxGD.
\begin{figure}[!htb]
\centering
\subfigure{
\includegraphics[width=0.45\textwidth]{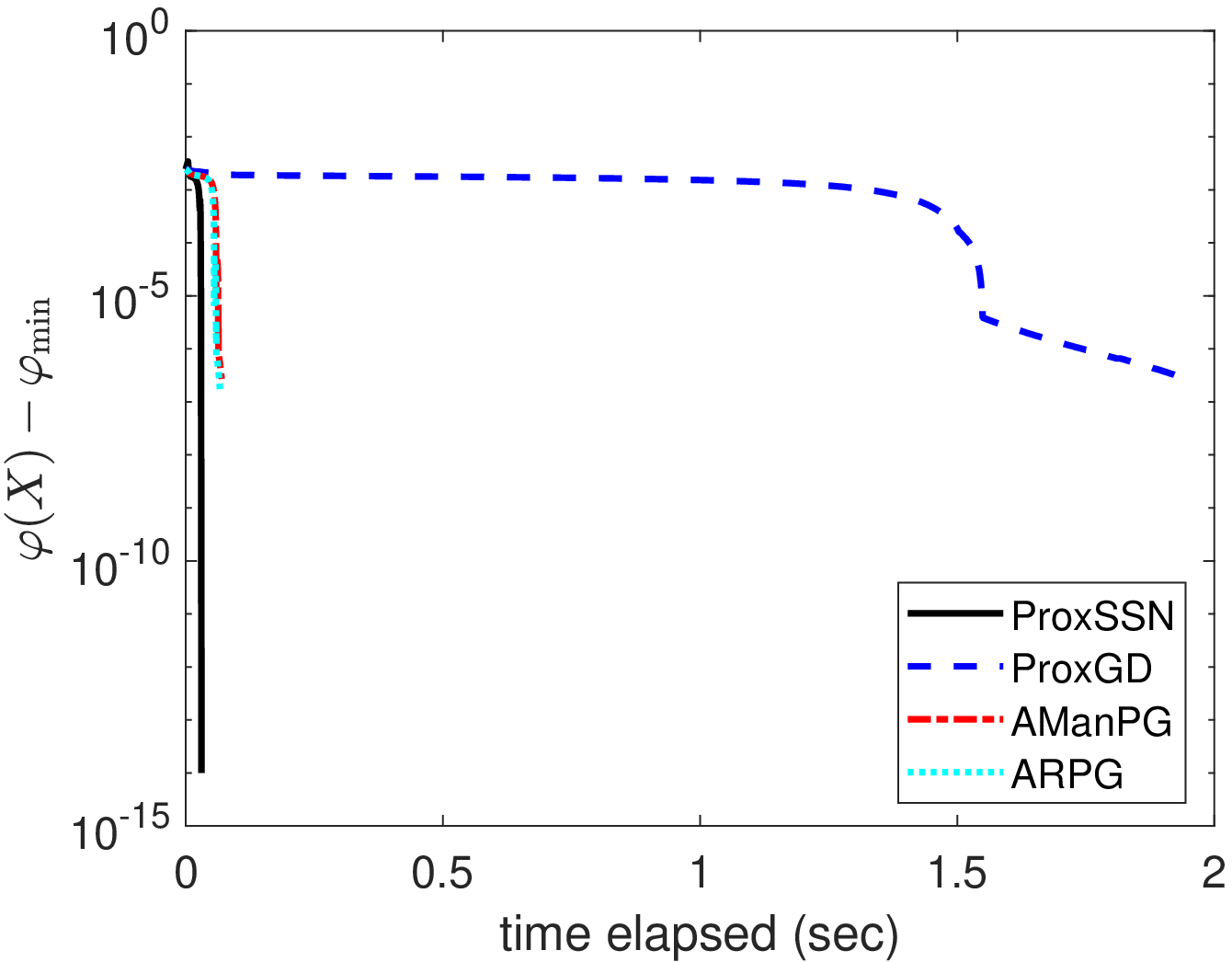}}
\subfigure{
\includegraphics[width=0.45\textwidth]{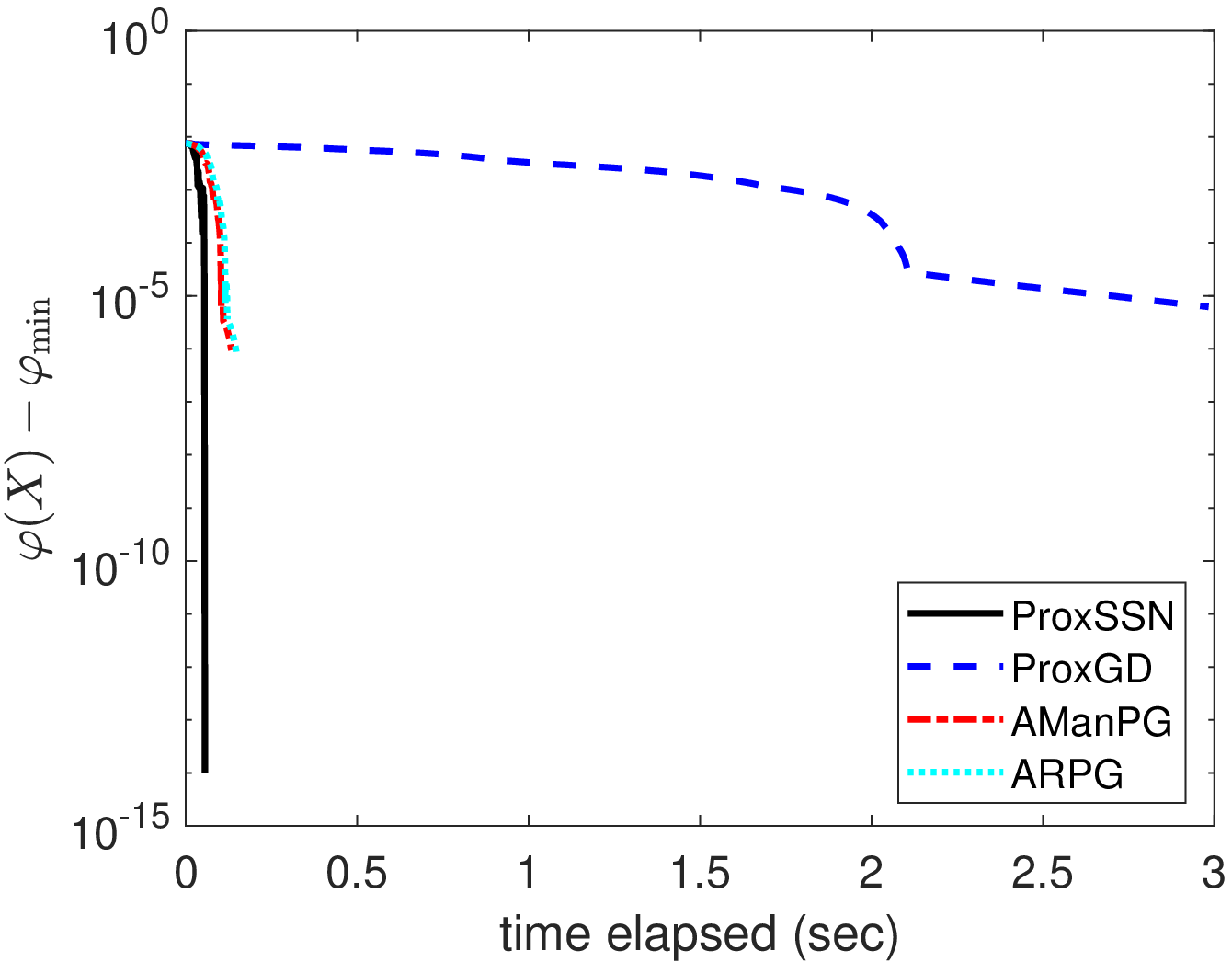}}
\caption{The trajectories of the objective function values with respect to the wall-clock time on the sparse least square regression \eqref{prob:slr1} with $m = 20,\lambda = 0.01$. Left: $n=2000$; right: $n=3000$.}\label{fig:perf_lsr_time}
\end{figure}

\begin{figure}[!htb]
\centering
\subfigure{
\includegraphics[width=0.45\textwidth]{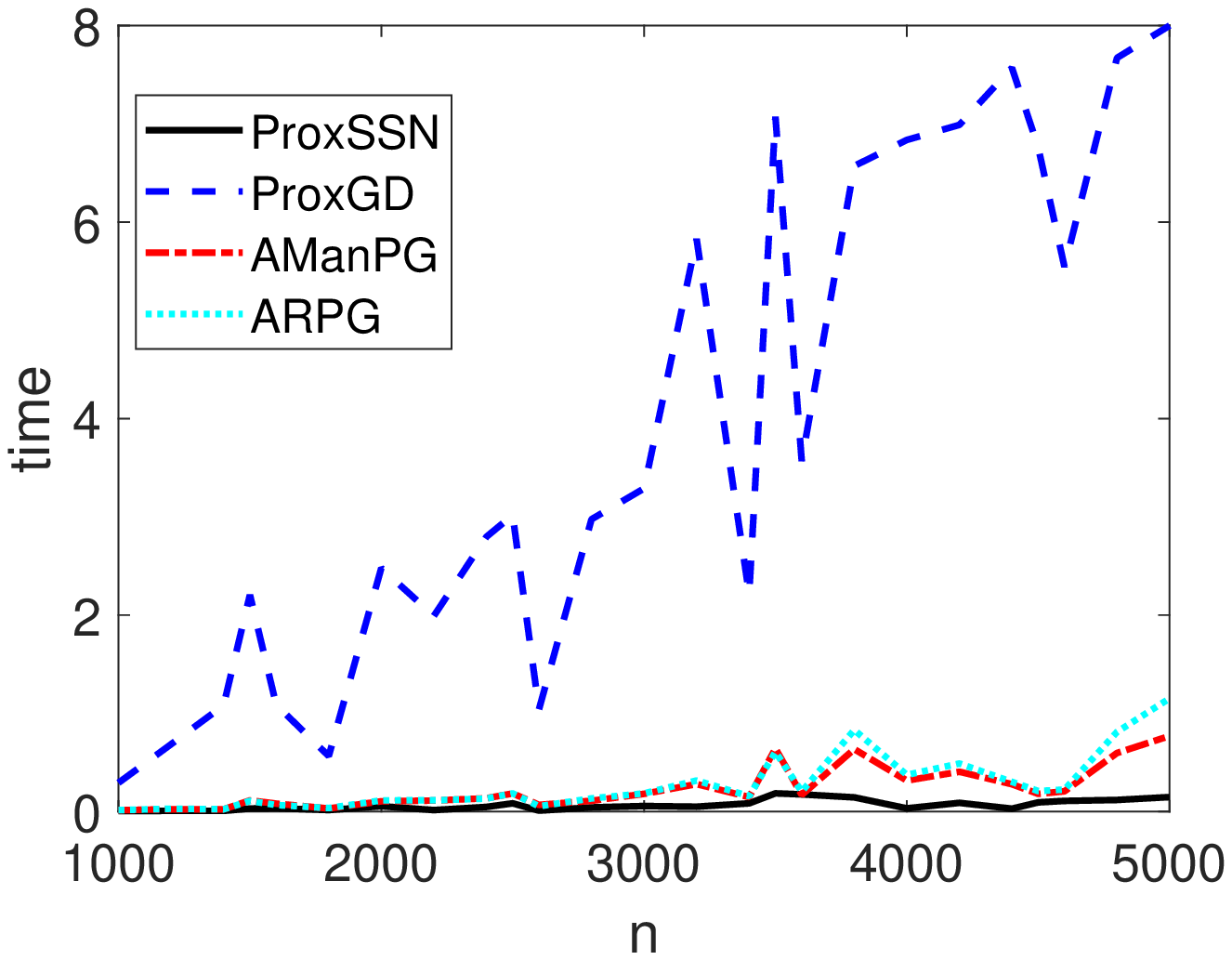}}
\subfigure{
\includegraphics[width=0.45\textwidth]{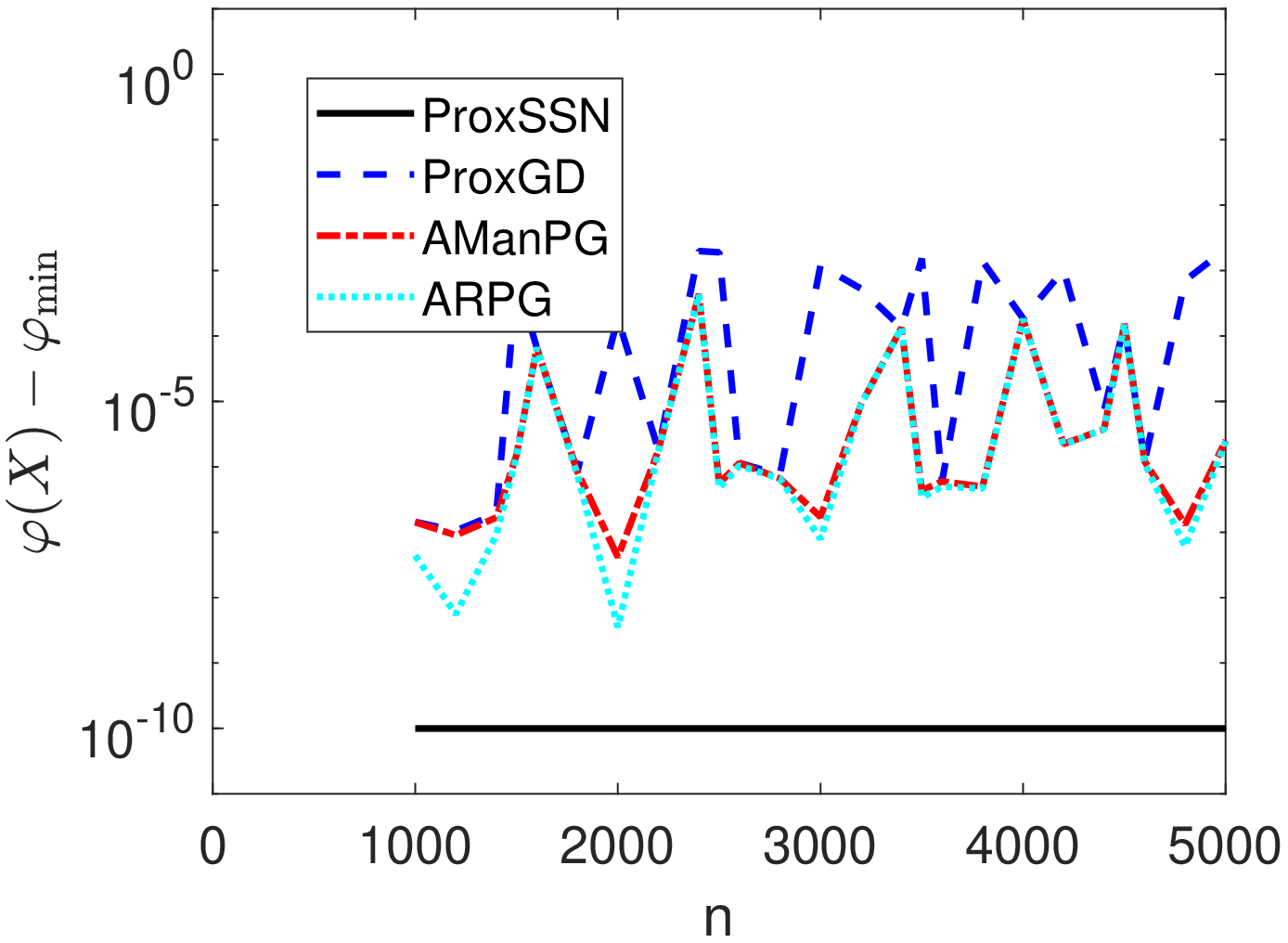}}
\caption{Comparisons of wall-clock time and the objective function values on the sparse least square regression \eqref{prob:slr1} with $m = 30,\lambda = 0.01$ for different $n$.}\label{fig:perf_lsr_m}
\end{figure}

\begin{footnotesize}
\setlength{\tabcolsep}{4pt}
\centering
\begin{longtable}{|c|cc|cc|cc|cc|}
\caption{Computational results of least square regression }\label{tab:lsr}\\
\hline
\multirow{2}{*}{$(m,n)$} & \multicolumn{2}{c|}{ProxSSN} & \multicolumn{2}{c|}{ProxGD} & \multicolumn{2}{c|}{AManPG} & \multicolumn{2}{c|}{ARPG}    \\ \cline{2-9}
     & time   &  obj  & time & obj   & time & obj   & time & obj      \\ \hline
\endfirsthead
\hline
\multirow{2}{*}{$(m,n)$} & \multicolumn{2}{c|}{ProxSSN} & \multicolumn{2}{c|}{ProxGD} & \multicolumn{2}{c|}{AManPG} & \multicolumn{2}{c|}{ARPG}    \\ \cline{2-9}
     & time   &  obj  & time & obj   & time & obj   & time & obj      \\ \hline
\endhead
\hline
\endfoot

 20 / 3000  &  0.04 & 3.43796e-02  &  2.92 & 3.43839e-02 &  0.07 & 3.43799e-02 &  0.07 & 3.43798e-02 \\ \hline
 20 / 3200  &  0.18 & 3.39873e-02  &  1.07 & 3.39886e-02 &  0.08 & 3.39886e-02 &  0.09 & 3.39885e-02 \\ \hline
 20 / 3400  &  0.11 & 3.23110e-02  &  5.61 & 3.23240e-02 &  0.26 & 3.23123e-02 &  0.29 & 3.23122e-02 \\ \hline
 20 / 3600  &  0.17 & 3.17896e-02  &  5.18 & 3.20365e-02 &  0.24 & 3.20365e-02 &  0.27 & 3.20364e-02 \\ \hline
 20 / 3800  &  0.05 & 3.43032e-02  &  5.94 & 3.43061e-02 &  0.24 & 3.43048e-02 &  0.27 & 3.43047e-02 \\ \hline
 20 / 4000  &  0.09 & 3.44652e-02  &  6.07 & 3.54900e-02 &  0.36 & 3.44664e-02 &  0.42 & 3.44663e-02 \\ \hline
 20 / 4200  &  0.21 & 3.60764e-02  &  6.34 & 3.67852e-02 &  0.43 & 3.60786e-02 &  0.50 & 3.60785e-02 \\ \hline
 20 / 4400  &  0.11 & 3.36402e-02  &  6.49 & 3.68569e-02 &  0.60 & 3.36402e-02 &  0.80 & 3.36402e-02 \\ \hline
 20 / 4600  &  0.13 & 3.39844e-02  &  6.59 & 3.71441e-02 &  0.92 & 3.35500e-02 &  1.32 & 3.35500e-02 \\ \hline
 20 / 4800  &  0.16 & 3.40047e-02  &  6.90 & 3.40144e-02 &  0.34 & 3.40059e-02 &  0.42 & 3.40058e-02 \\ \hline
 20 / 5000  &  0.06 & 3.32278e-02  &  6.90 & 3.54494e-02 &  0.86 & 3.32286e-02 &  1.22 & 3.32285e-02 \\ \hline
 30 / 3000  &  0.05 & 3.73733e-02  &  3.28 & 3.85623e-02 &  0.18 & 3.73735e-02 &  0.18 & 3.73734e-02 \\ \hline
 30 / 3200  &  0.05 & 3.46184e-02  &  5.82 & 3.51461e-02 &  0.28 & 3.46273e-02 &  0.31 & 3.46273e-02 \\ \hline
 30 / 3400  &  0.08 & 3.57899e-02  &  2.21 & 3.59235e-02 &  0.14 & 3.59235e-02 &  0.15 & 3.59234e-02 \\ \hline
 30 / 3600  &  0.17 & 3.73116e-02  &  3.56 & 3.73122e-02 &  0.17 & 3.73122e-02 &  0.20 & 3.73121e-02 \\ \hline
 30 / 3800  &  0.14 & 3.76258e-02  &  6.57 & 3.90207e-02 &  0.63 & 3.76263e-02 &  0.83 & 3.76263e-02 \\ \hline
 30 / 4000  &  0.03 & 4.06294e-02  &  6.83 & 4.08145e-02 &  0.31 & 4.08106e-02 &  0.37 & 4.08105e-02 \\ \hline
 30 / 4200  &  0.08 & 3.96908e-02  &  6.98 & 4.07081e-02 &  0.40 & 3.96931e-02 &  0.48 & 3.96930e-02 \\ \hline
 30 / 4400  &  0.03 & 3.95462e-02  &  7.57 & 3.95534e-02 &  0.27 & 3.95500e-02 &  0.30 & 3.95500e-02 \\ \hline
 30 / 4600  &  0.10 & 3.55181e-02  &  5.54 & 3.55193e-02 &  0.20 & 3.55193e-02 &  0.22 & 3.55192e-02 \\ \hline
 30 / 4800  &  0.11 & 3.85425e-02  &  7.67 & 3.92473e-02 &  0.59 & 3.85426e-02 &  0.81 & 3.85425e-02 \\ \hline
 30 / 5000  &  0.14 & 3.95414e-02  &  8.00 & 4.16688e-02 &  0.77 & 3.95439e-02 &  1.15 & 3.95438e-02 \\ \hline
 50 / 3000  &  0.05 & 4.14906e-02  &  4.46 & 4.16952e-02 &  0.23 & 4.14908e-02 &  0.24 & 4.14907e-02 \\ \hline
 50 / 3200  &  0.03 & 4.08372e-02  &  2.60 & 4.08408e-02 &  0.17 & 4.08407e-02 &  0.18 & 4.08407e-02 \\ \hline
 50 / 3400  &  0.12 & 4.53565e-02  &  5.64 & 4.58502e-02 &  0.18 & 4.58502e-02 &  0.19 & 4.58501e-02 \\ \hline
 50 / 3600  &  0.05 & 4.52462e-02  &  8.17 & 4.57722e-02 &  0.35 & 4.52464e-02 &  0.43 & 4.52463e-02 \\ \hline
 50 / 3800  &  0.06 & 4.12851e-02  &  3.47 & 4.12852e-02 &  0.19 & 4.12852e-02 &  0.23 & 4.12851e-02 \\ \hline
 50 / 4000  &  0.08 & 4.44167e-02  &  10.12 & 4.40984e-02 &  0.82 & 4.40979e-02 &  0.40 & 4.40983e-02 \\ \hline
 50 / 4200  &  0.23 & 4.16107e-02  &  10.48 & 4.16618e-02 &  0.60 & 4.16623e-02 &  1.26 & 4.16622e-02 \\ \hline
 50 / 4400  &  0.13 & 4.37490e-02  &  13.72 & 4.37491e-02 &  0.62 & 4.37491e-02 &  1.13 & 4.37490e-02 \\ \hline
 50 / 4600  &  0.07 & 4.41428e-02  &  3.45 & 4.41463e-02 &  0.31 & 4.41463e-02 &  0.42 & 4.41462e-02 \\ \hline
 50 / 4800  &  0.44 & 4.40181e-02  &  13.08 & 4.49775e-02 &  0.96 & 4.40813e-02 &  1.51 & 4.40812e-02 \\ \hline
 50 / 5000  &  0.18 & 4.02113e-02  &  13.16 & 4.38594e-02 &  0.91 & 4.05313e-02 &  1.35 & 4.05312e-02 \\ \hline

\end{longtable}
\end{footnotesize}

\begin{figure}[!htb]
\centering
\subfigure{
\includegraphics[width=0.45\textwidth]{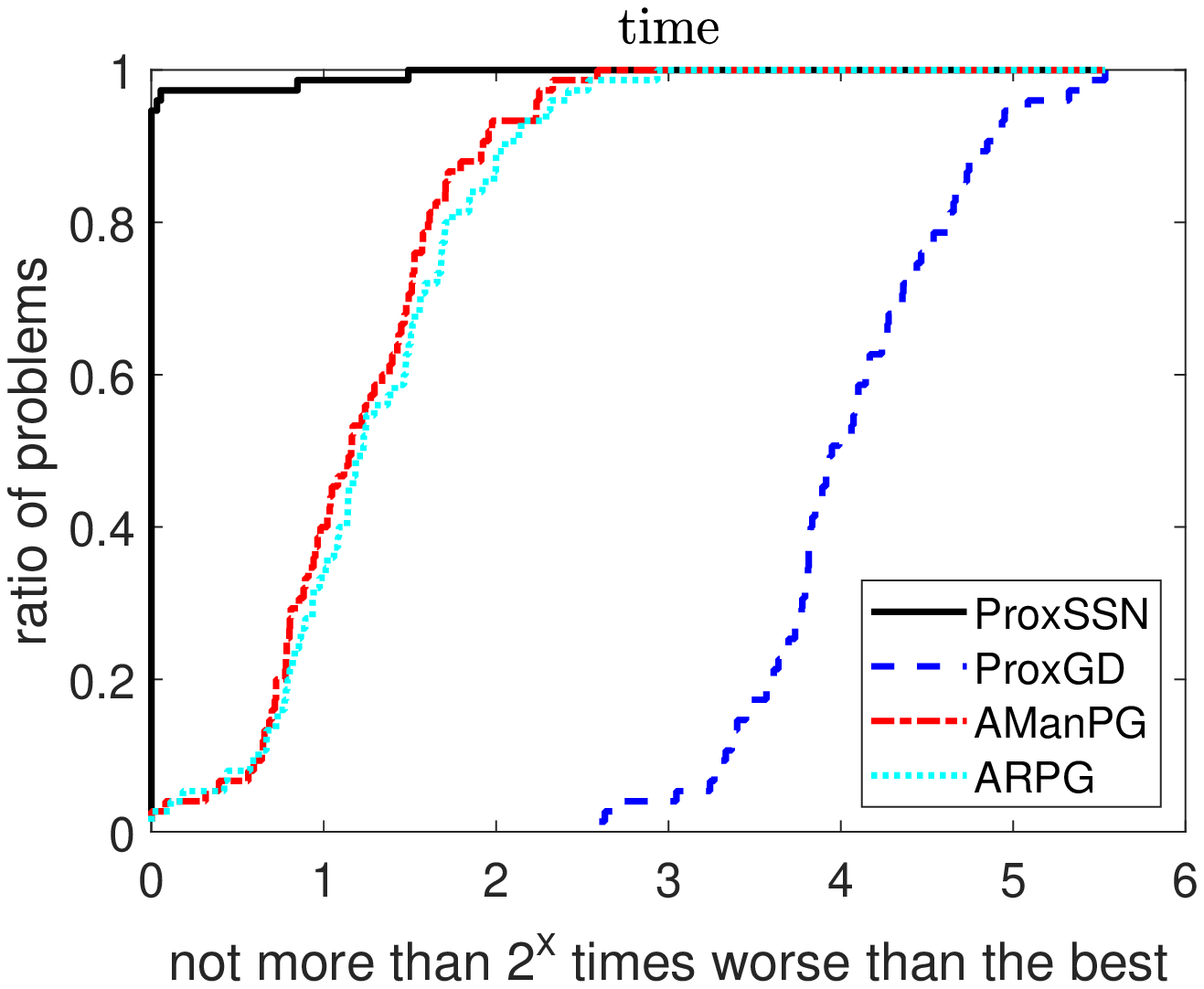}}
\subfigure{
\includegraphics[width=0.45\textwidth]{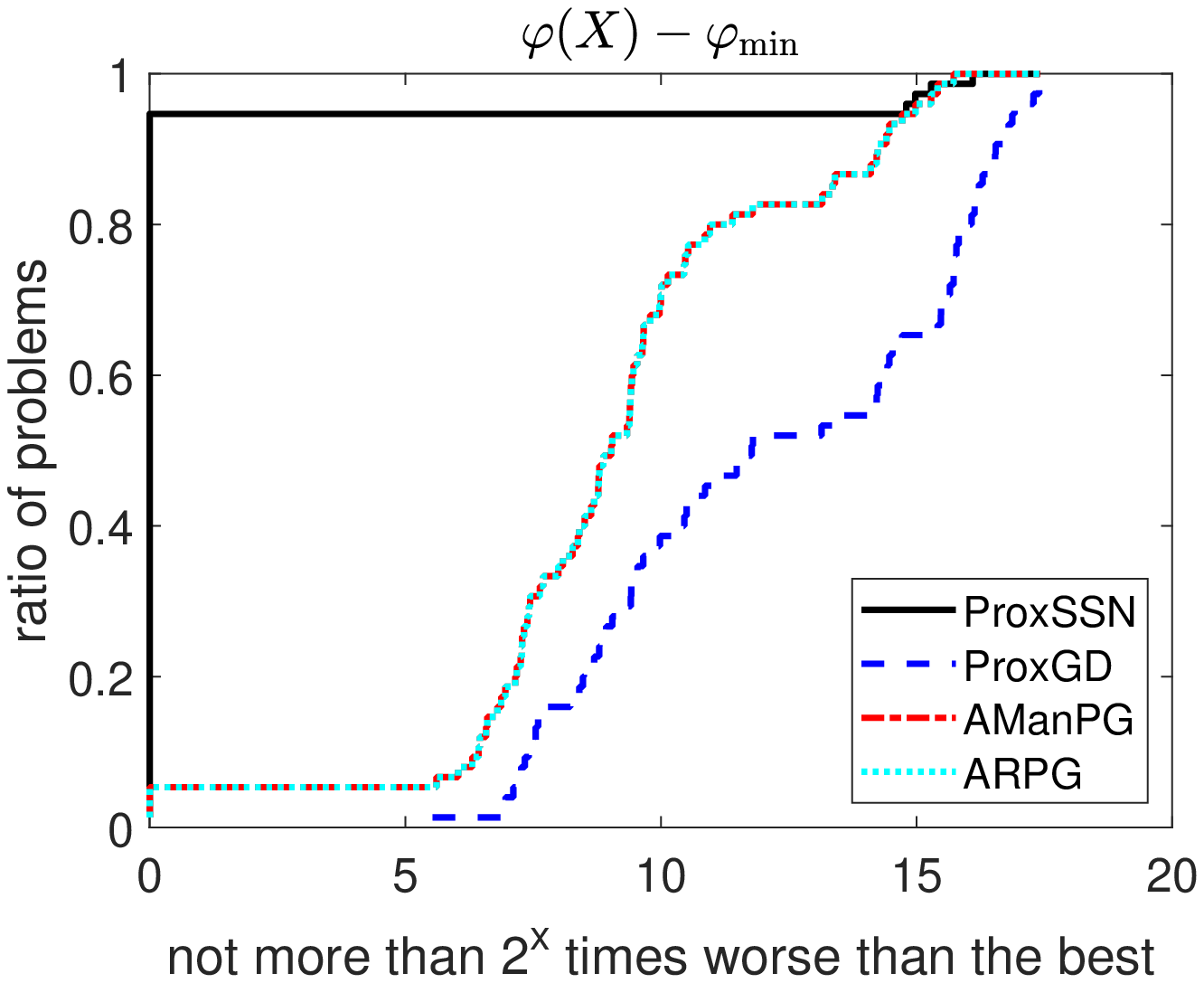}}
\caption{The performance profiles on the sparse least square regression \eqref{prob:slr1}.}\label{fig:profile_lsr_time}
\end{figure}

\subsection{Nonnegative principal component analysis}
In this subsection, we consider the nonnegative PCA model \eqref{prob:npca} on the oblique manifold.
	All parameters of our algorithm are the same as those in subsection \ref{sec:spca}. Since AManPG and ARPG cannot achieve our requirement for accuracy in most testing cases, we omit them in this experiment. The possible reason is that the convergence of AManPG and ARPG relies on the Lipschitz continuity of the nonsmooth part, while it is not the case for the indicator function of $\delta_{X\geq 0}$.
	Hence, we only compare our algorithm with ProxGD. The comparisons are illustrated in Figures \ref{fig:perf_npca_m} and \ref{fig:profile_npca_time} and Table \ref{tab:npca} for the computational results. Those results show that ProxSSN achieves better results and converges much faster to highly accurate solutions compared with ProxGD.

\begin{figure}[!htb]
\centering
\subfigure{
\includegraphics[width=0.45\textwidth]{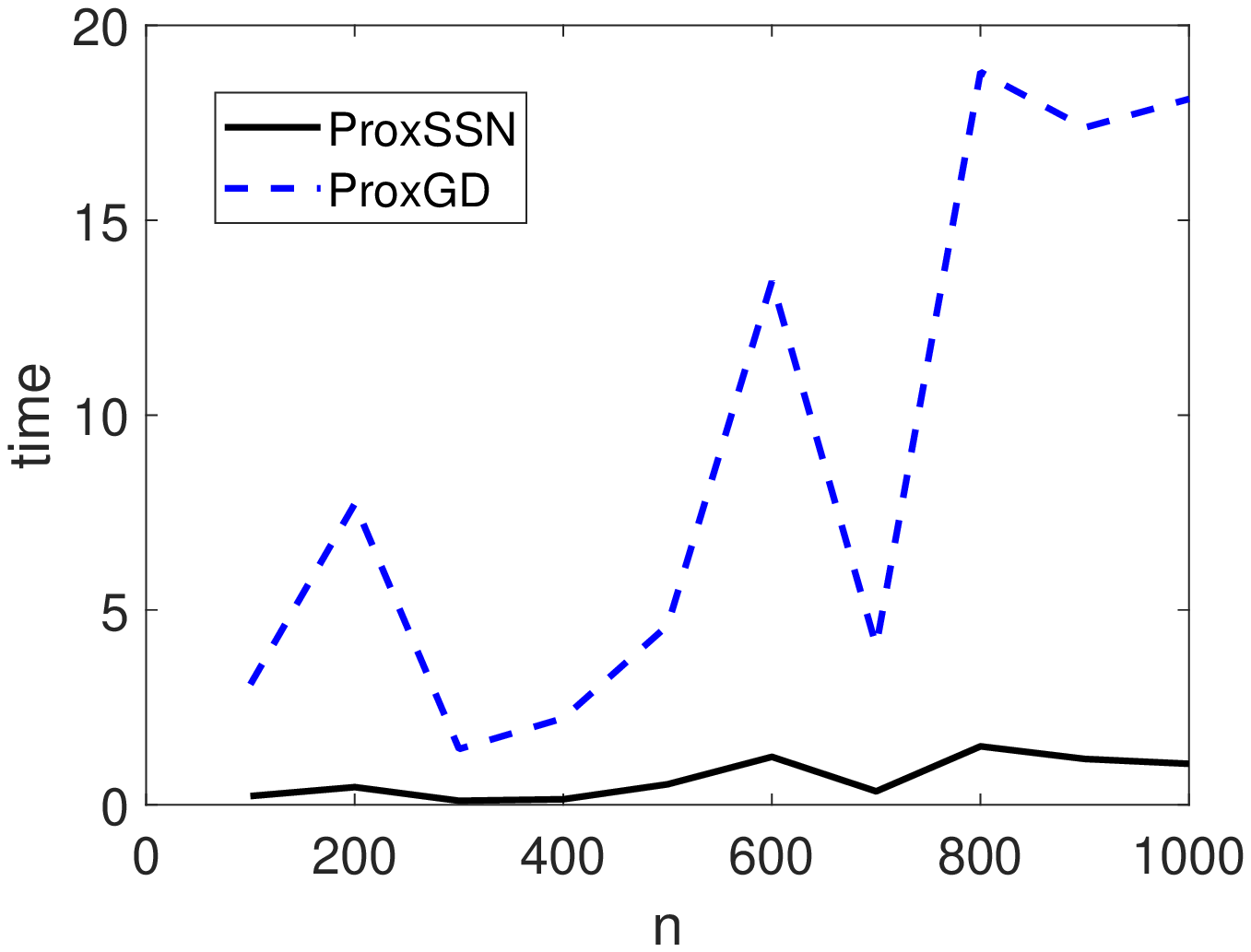}}
\subfigure{
\includegraphics[width=0.45\textwidth]{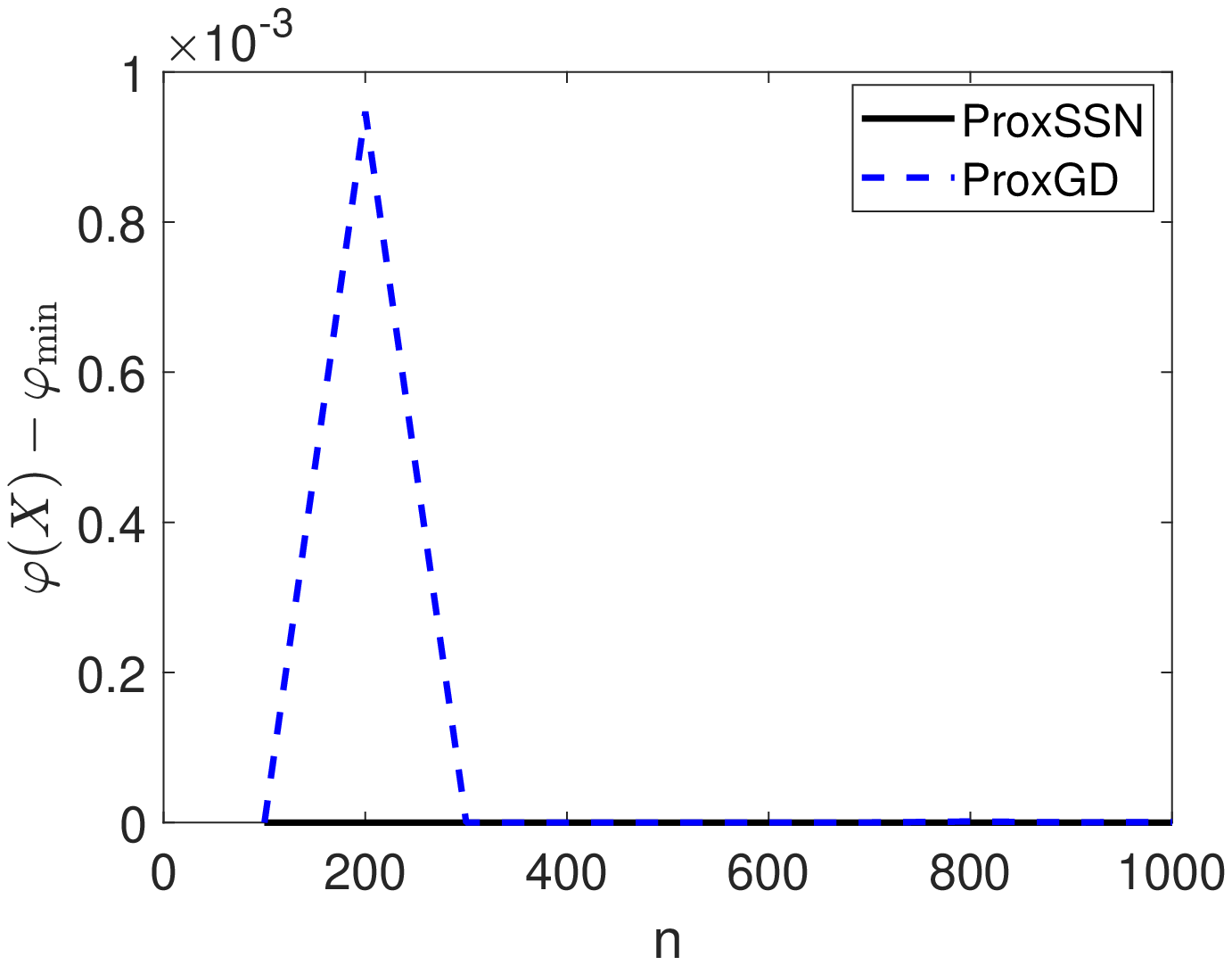}}
\caption{Comparisons of wall-clock time and the objective function values on the nonnegative PCA problem \eqref{prob:npca} with $p = 20$ for different $n$.}\label{fig:perf_npca_m}
\end{figure}

\begin{figure}[!htb]
\centering
\subfigure{
\includegraphics[width=0.45\textwidth]{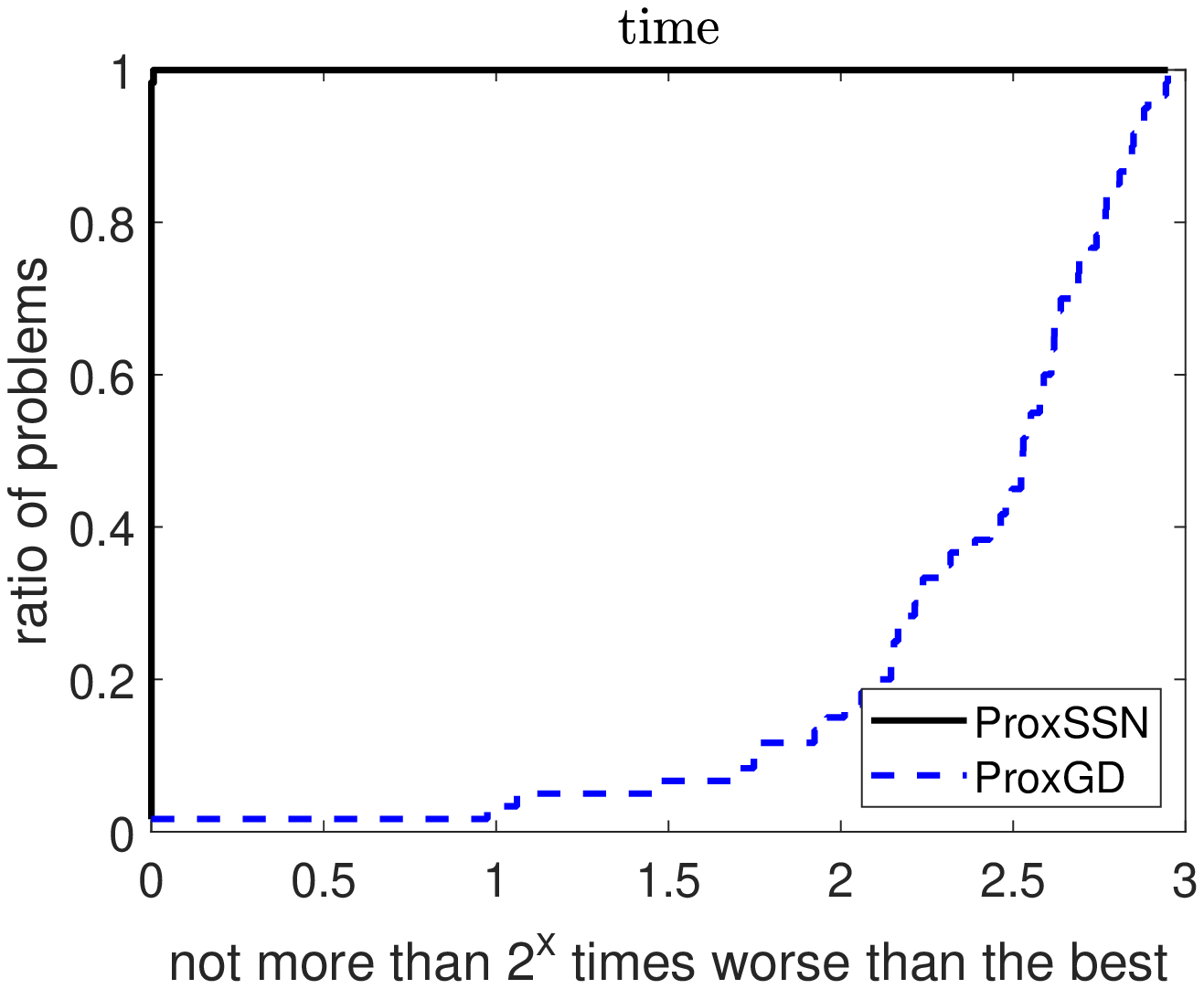}}
\subfigure{
\includegraphics[width=0.45\textwidth]{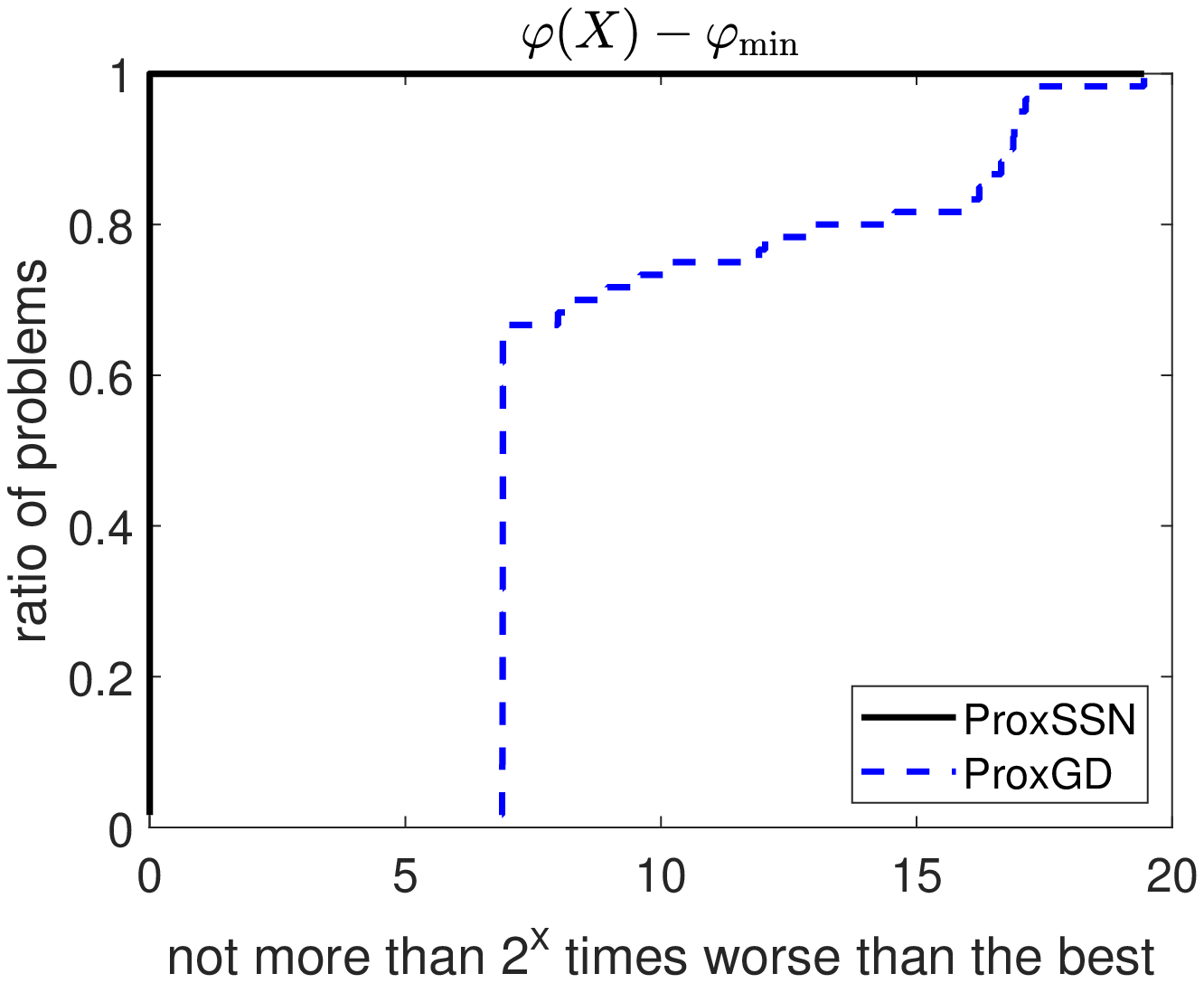}}
\caption{The performance profiles on nonnegative PCA problem \eqref{prob:npca}.}\label{fig:profile_npca_time}
\end{figure}

\begin{footnotesize}
\centering
\begin{longtable}{|c|cccc|cccc|}
\caption{Computational results of the nonnegative PCA problem \eqref{prob:npca}. }\label{tab:npca}\\
\hline
\multirow{2}{*}{$(n,p)$} & \multicolumn{4}{c|}{ProxSSN} & \multicolumn{4}{c|}{ProxGD}      \\ \cline{2-9}
     & time   &  obj  & err  & iter  & time   &  obj  & err  & iter        \\ \hline
\endfirsthead
\hline
\multirow{2}{*}{$(n,p)$} & \multicolumn{4}{c|}{ProxSSN} & \multicolumn{4}{c|}{ProxGD}      \\ \cline{2-9}
     & time   &  obj  & err  & iter  & time   &  obj  & err  & iter        \\ \hline
\endhead
\hline
\endfoot

500 / 10  & 0.35  & 1.166866 & 1.23e-7  & 66 (9.2) & 2.46  & 1.166866 & 2.93e-5  & 2840  \\ \hline
500 / 15  & 0.22  & 1.619850 & 6.55e-7  & 33 (9.5) & 1.96  & 1.619850 & 2.60e-5  & 1838  \\ \hline
500 / 20  & 0.53  & 1.942255 & 8.52e-7  & 64 (9.8) & 4.60  & 1.942255 & 2.29e-5  & 3413  \\ \hline
500 / 25  & 0.72  & 2.300220 & 7.04e-7  & 73 (9.8) & 10.42  & 2.300220 & 1.70e-5  & 7317  \\ \hline
500 / 30  & 0.89  & 2.523960 & 6.56e-7  & 83 (9.9) & 15.41  & 2.523999 & 2.69e-4  & 10000  \\ \hline
500 / 5  & 0.04  & 0.592243 & 8.56e-8  & 13 (8.8) & 0.13  & 0.592244 & 4.83e-5  & 189  \\ \hline
600 / 10  & 0.12  & 1.223420 & 5.71e-7  & 20 (9.4) & 1.32  & 1.223420 & 2.57e-5  & 1362  \\ \hline
600 / 15  & 0.34  & 1.894680 & 3.86e-7  & 47 (9.7) & 5.43  & 1.894680 & 2.44e-5  & 4455  \\ \hline
600 / 20  & 1.23  & 2.261036 & 1.19e-6  & 130 (9.6) & 13.43  & 2.261036 & 1.63e-5  & 9617  \\ \hline
600 / 25  & 0.90  & 2.238704 & 9.12e-7  & 91 (9.8) & 14.71  & 2.241462 & 1.54e-3  & 10000  \\ \hline
600 / 30  & 0.92  & 2.510240 & 1.19e-6  & 94 (9.8) & 16.34  & 2.510453 & 1.81e-5  & 10000  \\ \hline
600 / 5  & 0.04  & 0.782584 & 8.94e-8  & 12 (9.2) & 0.52  & 0.782584 & 2.43e-5  & 757  \\ \hline
700 / 10  & 0.48  & 1.332547 & 3.06e-7  & 66 (9.5) & 4.74  & 1.332547 & 2.90e-5  & 5031  \\ \hline
700 / 15  & 0.23  & 1.891921 & 4.99e-7  & 32 (9.7) & 3.09  & 1.891922 & 1.61e-5  & 2448  \\ \hline
700 / 20  & 0.35  & 2.232710 & 7.31e-7  & 38 (9.7) & 4.07  & 2.232710 & 1.86e-5  & 2787  \\ \hline
700 / 25  & 1.00  & 2.578730 & 1.39e-6  & 90 (9.9) & 18.99  & 2.578745 & 1.61e-4  & 10000  \\ \hline
700 / 30  & 2.01  & 2.997021 & 1.90e-6  & 124 (9.8) & 14.99  & 3.025005 & 1.36e-5  & 6748  \\ \hline
700 / 5  & 0.09  & 0.751121 & 1.23e-7  & 19 (9.1) & 1.06  & 0.751121 & 4.40e-5  & 1475  \\ \hline
800 / 10  & 0.62  & 1.361048 & 6.58e-7  & 57 (9.5) & 5.33  & 1.361048 & 2.53e-5  & 3805  \\ \hline
800 / 15  & 1.07  & 1.837726 & 5.13e-7  & 99 (9.7) & 17.68  & 1.839436 & 7.48e-4  & 10000  \\ \hline
800 / 20  & 1.50  & 2.262145 & 1.20e-6  & 115 (9.5) & 18.80  & 2.262147 & 5.92e-5  & 10000  \\ \hline
800 / 25  & 2.30  & 2.621645 & 1.51e-6  & 158 (9.8) & 19.63  & 2.623857 & 1.42e-4  & 10000  \\ \hline
800 / 30  & 1.58  & 2.943294 & 2.20e-6  & 122 (9.7) & 19.78  & 2.944495 & 1.71e-3  & 10000  \\ \hline
800 / 5  & 0.07  & 0.754357 & 1.28e-7  & 10 (9.0) & 0.31  & 0.754357 & 4.35e-5  & 257  \\ \hline
900 / 10  & 0.10  & 1.374185 & 3.69e-7  & 14 (9.3) & 1.51  & 1.374185 & 2.40e-5  & 1200  \\ \hline
900 / 15  & 0.86  & 1.933525 & 1.06e-6  & 93 (9.7) & 7.89  & 1.933525 & 1.07e-5  & 5513  \\ \hline
900 / 20  & 1.18  & 2.360027 & 1.36e-6  & 107 (9.7) & 17.38  & 2.360027 & 1.74e-5  & 10000  \\ \hline
900 / 25  & 1.88  & 2.773641 & 1.69e-6  & 153 (9.8) & 19.07  & 2.777065 & 3.41e-4  & 10000  \\ \hline
900 / 30  & 1.60  & 3.157731 & 2.02e-6  & 121 (9.8) & 21.05  & 3.159992 & 3.55e-3  & 10000  \\ \hline
900 / 5  & 0.27  & 0.770418 & 2.27e-7  & 62 (7.9) & 1.54  & 0.770418 & 2.59e-5  & 1672  \\ \hline
1000 / 10  & 0.64  & 1.376750 & 2.84e-7  & 81 (9.0) & 3.50  & 1.376750 & 3.47e-5  & 2719  \\ \hline
1000 / 15  & 0.19  & 2.049750 & 1.08e-6  & 21 (9.5) & 2.65  & 2.049750 & 2.16e-5  & 1673  \\ \hline
1000 / 20  & 1.05  & 2.581317 & 1.41e-6  & 85 (9.7) & 18.11  & 2.581318 & 2.08e-5  & 10000  \\ \hline
1000 / 25  & 1.30  & 3.043254 & 1.18e-6  & 101 (9.8) & 20.56  & 3.045420 & 4.82e-4  & 10000  \\ \hline
1000 / 30  & 1.79  & 3.516861 & 2.84e-6  & 129 (9.8) & 23.79  & 3.517976 & 1.25e-3  & 10000  \\ \hline
1000 / 5  & 0.05  & 0.804861 & 2.75e-7  & 10 (9.0) & 0.33  & 0.804861 & 3.53e-5  & 390  \\ \hline

\end{longtable}
\end{footnotesize}

\subsection{Bose-Einstein condensates}
In this subsection, we consider the Bose-Einstein condensates (BEC) problem \cite{aftalion2001vortices,bao2013mathematical,wu2017regularized}. The total energy in the BEC problem is defined as
\begin{equation}
    E(\psi) = \int_{\mathbb{R}^n}\left[ \frac{1}{2}|\nabla \psi(x)|^2 + V(x) |\psi(x)|^2 + \frac{\beta}{2} |\psi(x)|^4 - \Omega \bar{\psi}(x)L_z(x)\right]{\rm d}x,
\end{equation}
where $x\in\mathbb{R}^d$ is the spatial coordinate vector, $\bar{\psi}$ denotes the complex conjugate of $\psi$, $L_z = -i(x\partial_y - y\partial_x)$, $V(x)$ is an external trapping potential, $\Omega\in\mathbb{R}$ is an angular velocity, and $\beta$ is a given constant. Then, the ground state of a BEC is usually defined as the minimizer of the following nonconvex minimization problem
\begin{equation}
    \min_{\psi\in S} E(\psi),
\end{equation}
where $S$ is the spherical constraint and is defined as
\begin{equation*}
    S: = \left\{\psi~ | ~E(\psi) < \infty, \int_{\mathbb{R}^d}|\psi(x)|^2 dx = 1 \right\}.
\end{equation*}
By using a suitable discretization, such as finite differences or the sine pseudo-spectral and Fourier pseudo-spectral (FP) method, we can reformulate the BEC problem as follows:
\begin{equation}\label{pro:bec}
    \min_{x\in\mathbb{C}^M} \frac{1}{2}x^*Ax + \frac{\beta}{2}\sum_{i=1}^M|x_i|^4, ~~\mathrm{s.t.}~~ x\in S^M,
\end{equation}
where $S^M = \{x\in\mathbb{C}^M~|~\|x\|_2 = 1\}$ with a positive integer $M$ and $A\in\mathbb{C}^{M\times M}$ is a Hermitian matrix. We refer to \cite{wu2017regularized} for the details.

\begin{figure}[!htb]
\centering
\subfigure{
\includegraphics[width=0.45\textwidth]{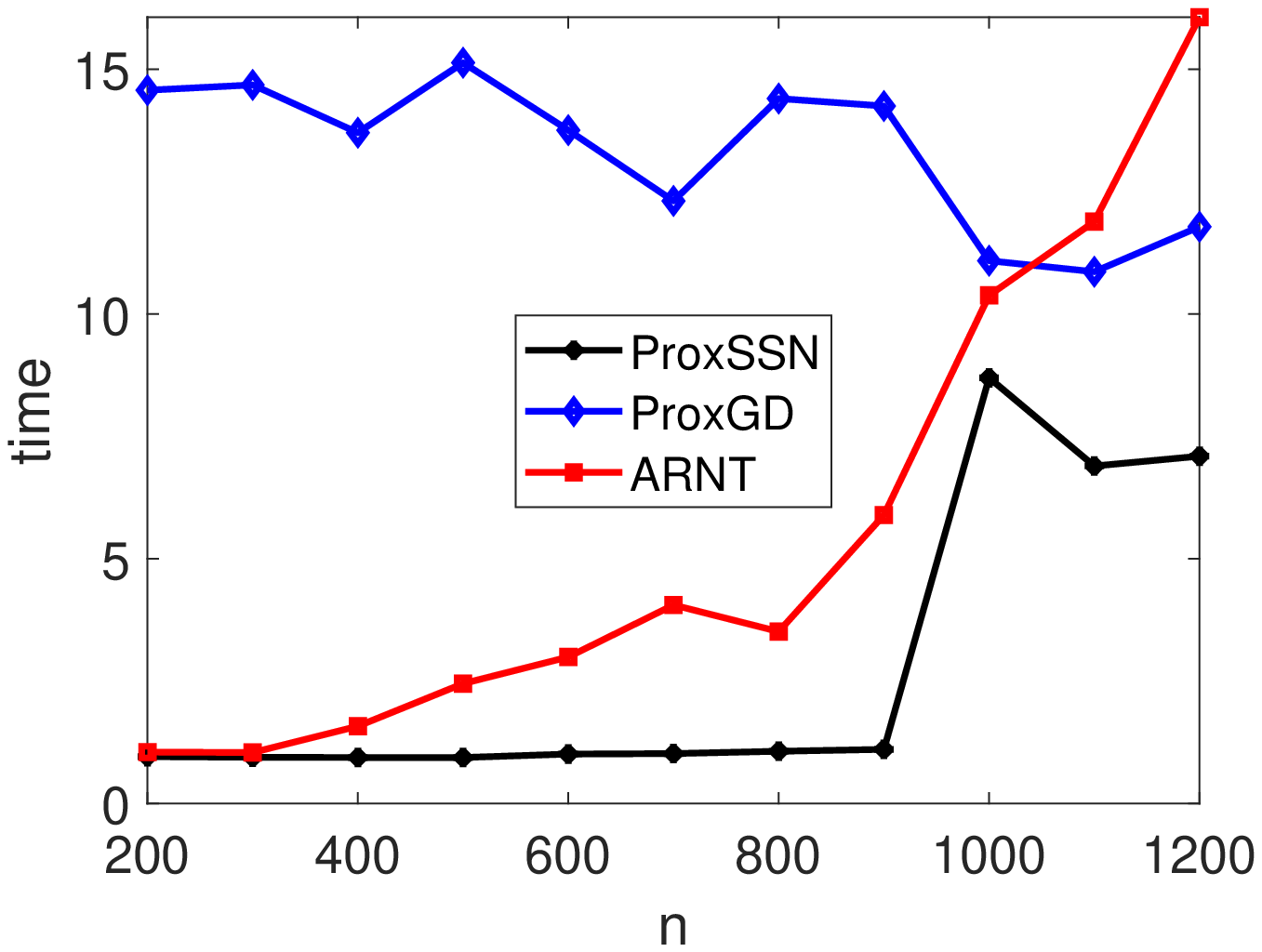}}
\subfigure{
\includegraphics[width=0.45\textwidth]{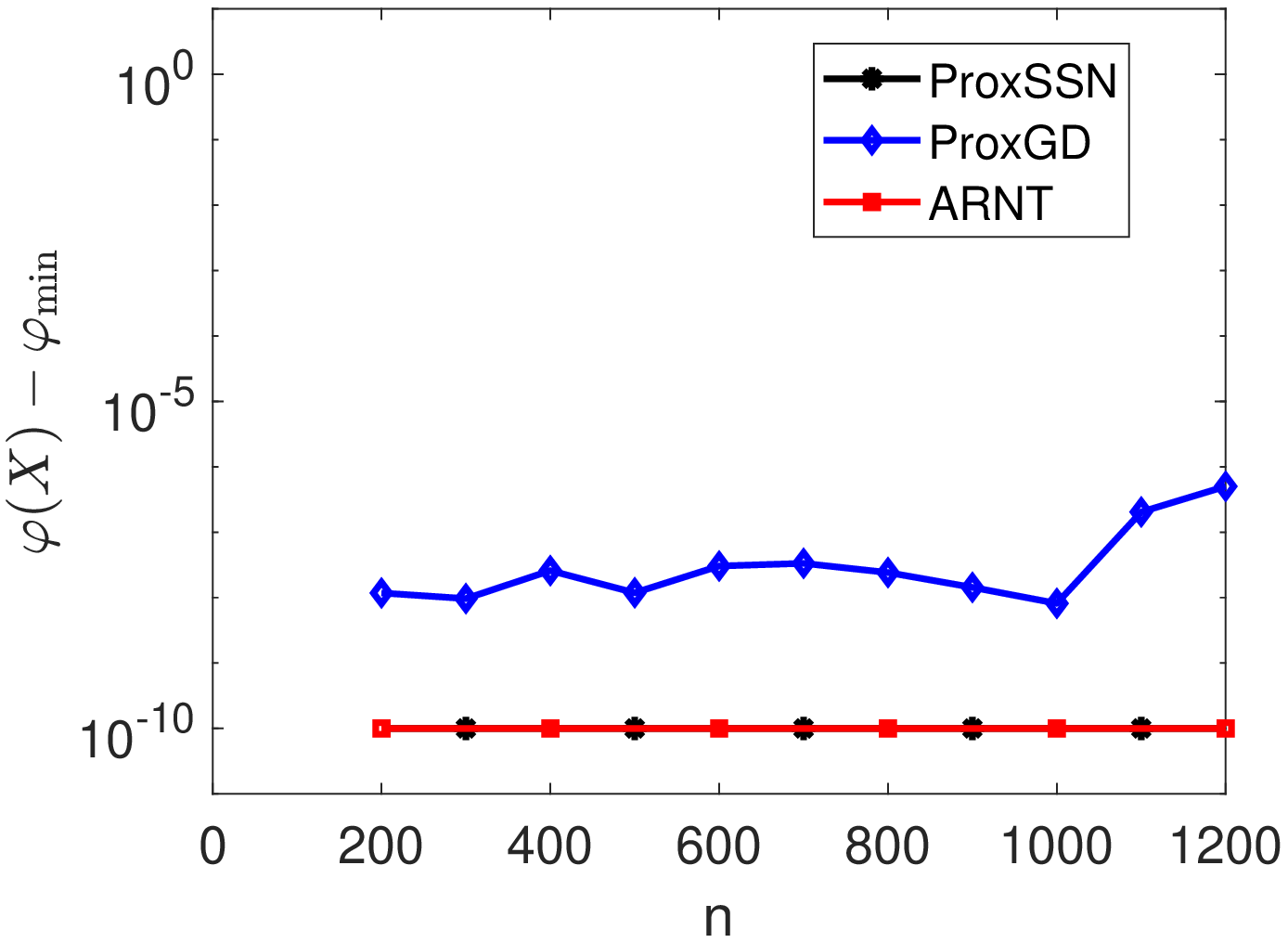}}
\caption{Comparisons of wall-clock time and the objective function values on the BEC problem \eqref{pro:bec} with $\Omega = 0.2$ and ``b''.}\label{fig:perf_bec_m}
\end{figure}

The ProxGD and ProxSSN  are applied to problem \eqref{pro:bec} by setting
\begin{equation*}
    f(x): = \frac{1}{2}x^*Ax + \frac{\beta}{2}\sum_{i=1}^M|x_i|^4, ~~~h(x) =  \delta_{S^M}(x).
\end{equation*}
Since problem \eqref{pro:bec} can be seen as a smooth problem on the complex sphere, we do comparisons with the adaptive regularized Newton method (ARNT) in \cite{hu2018adaptive}.  All parameters of ProxGD and ProxSSN follow the setup discussed in subsection \ref{sec:spca} except $\mathrm{tol} = 10^{-6}$. The parameters of ARNT are the same as in \cite{hu2018adaptive},  we stop ARNT when the Riemannian gradient norm is less than $10^{-6}$ or the maximum number of iterations 500 is reached.  We take $d = 2$ and $V(x,y) = \frac{1}{2}x^2 + \frac{1}{2}y^2$. The BEC problem is discretized by FP on the bounded domain $(-16,16)^2$ with $\beta$ ranging from $500$ to $1000$ and $\Omega = 0,0.1,0.25$. Following the settings in \cite{wu2017regularized}, we use the mesh refinement procedure with the coarse meshes $(2^k+1) \times (2^k+1) (k=2,\cdots, 5)$ to gradually obtain an initial solution point on the finest mesh $(2^6+1) \times (2^6+1)$. all algorithms are tested with mesh refinement and start from the same initial point on the coarsest mesh with
\begin{equation*}
    \phi_a(x, y) = \frac{(1-\Omega)\phi_1(x,y)+\Omega\phi_2(x,y)}{\|(1-\Omega)\phi_1(x,y)+\Omega\phi_2(x,y)\|},~~\phi_b(x, y) = \frac{\phi_1(x,y)+\phi_2(x,y)}{\|\phi_1(x,y)+\phi_2(x,y)\|},
\end{equation*}
where $\phi_1(x,y) = \frac{1}{\sqrt{\pi}}e^{-(x^2+y^2)/2}$ and $\phi_2(x,y)= \frac{x+iy}{\sqrt{\pi}}e^{-(x^2+y^2)/2}$.

\begin{figure}[!htb]
\centering
\subfigure{
\includegraphics[width=0.45\textwidth]{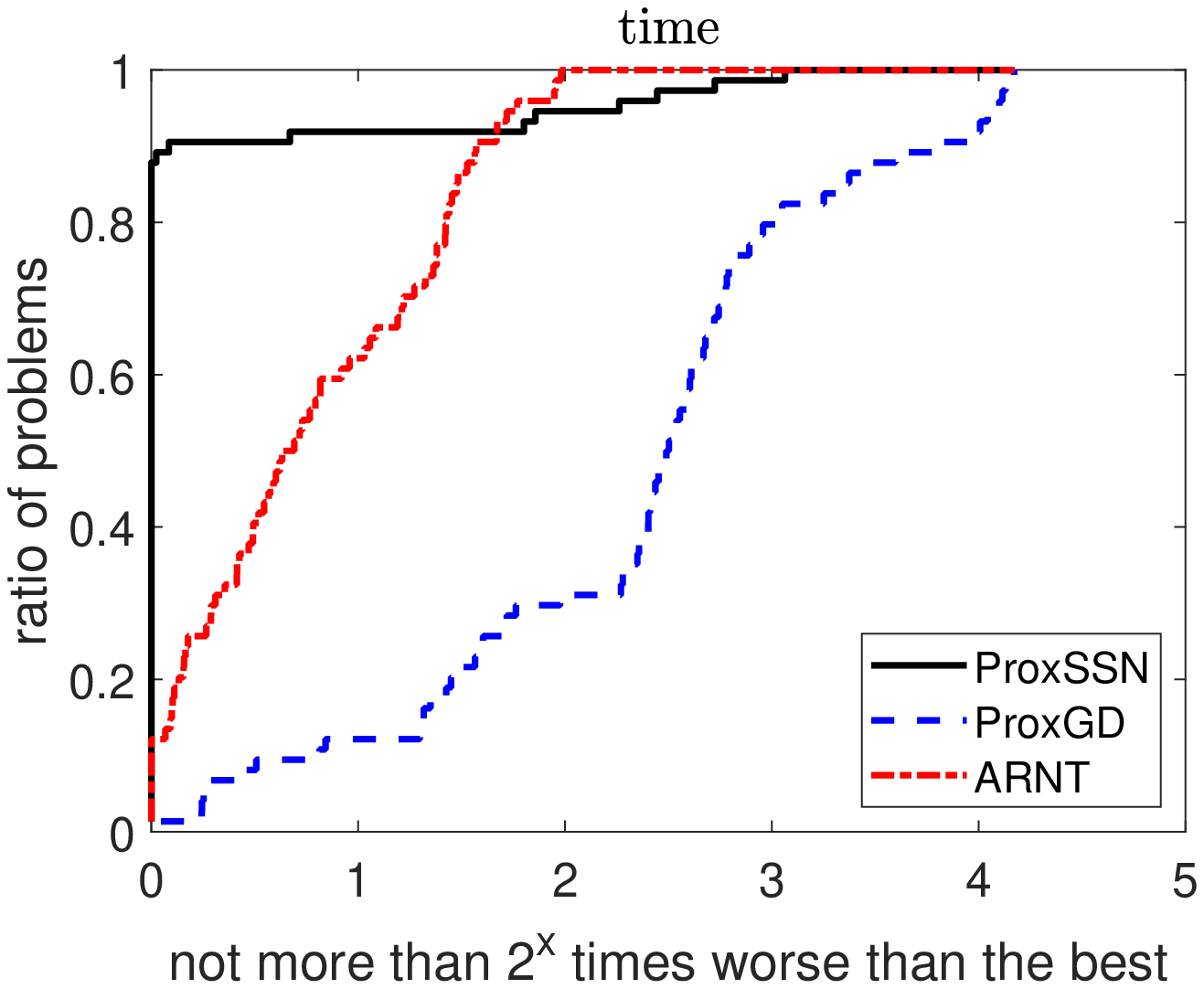}}
\subfigure{
\includegraphics[width=0.45\textwidth]{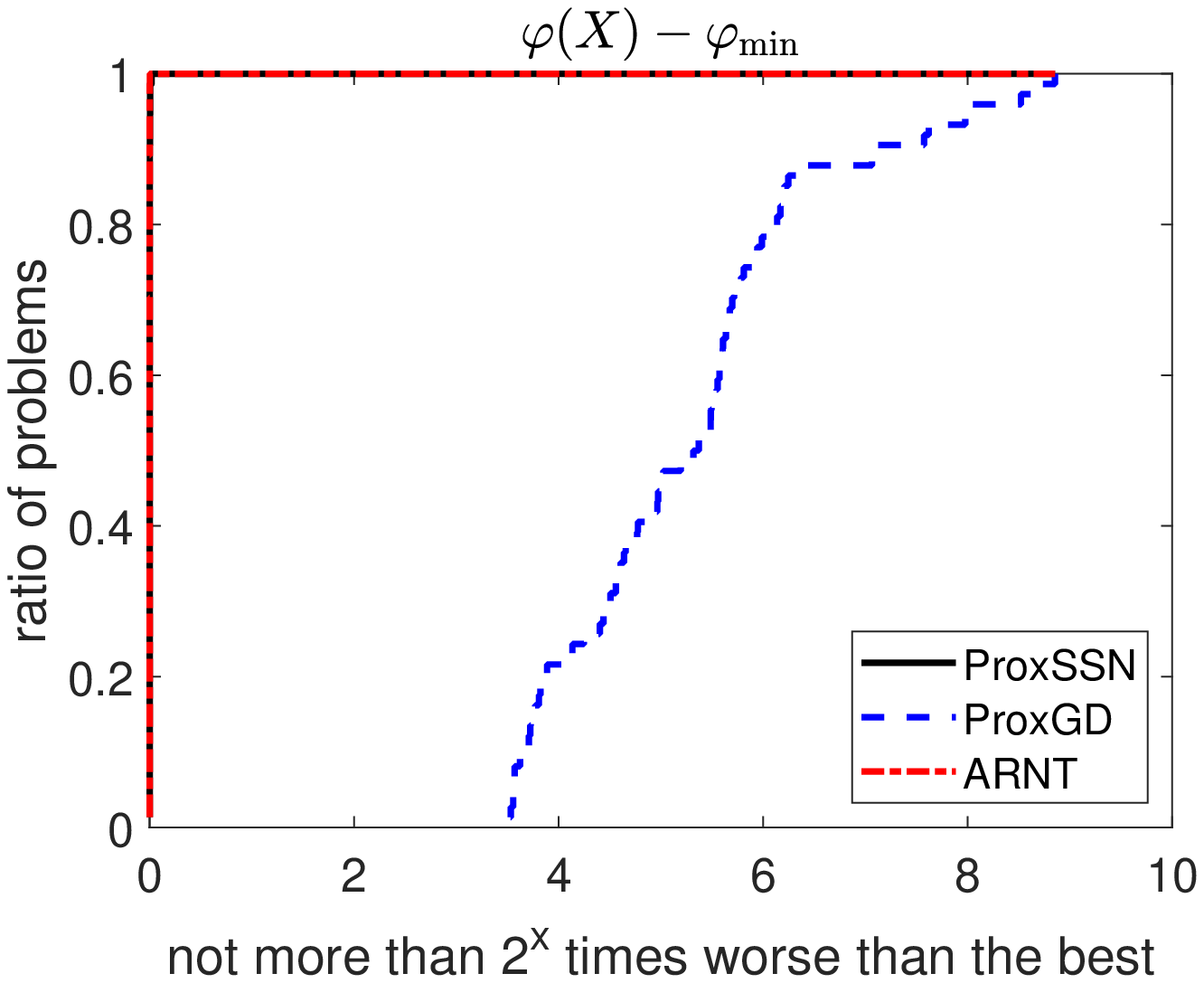}}
\caption{The performance profiles on the BEC problem \eqref{pro:bec}.}\label{fig:profile_bec_time}
\end{figure}
Table \ref{tab:lsr} gives detailed computational results. For the first column,``Initial'' denotes the type of the initial point, ``a'' and ``b'' are $\phi_a(x,y)$ and $\phi_b(x,y)$, respectively. For the iteration numbers in our table,  ``iter'' and ``siter'' denote the outer iterations and the average sub-iterations, respectively.  Note that ProxGD reaches the maximum iteration of 1000, which shows that ProxGD does not converge to the required accuracy in all cases. ProxSSN and ARNT find a point with almost the same objective function value,  while our algorithm ProxSSN is faster than ARNT in most cases. Figures \ref{fig:perf_bec_m} and \ref{fig:profile_bec_time} demonstrate the superiority of ProxSSN over ARNT and ProxGD.

\begin{scriptsize}
\setlength{\tabcolsep}{1.2pt}
\centering
\begin{longtable}{|c|ccc|ccc|ccc|ccc|}
\caption{Computational results of BEC }\label{tab:bec}\\
\hline
\multirow{2}{*}{$(\beta,\Omega,Initial)$} & \multicolumn{3}{c|}{ProxSSN} & \multicolumn{3}{c|}{ProxGD} & \multicolumn{3}{c|}{ARNT}     \\ \cline{2-10}
     & time   &  obj & iter (siter)  & time & obj & iter     & time & obj & iter (siter)     \\ \hline
\endfirsthead
\hline
\multirow{2}{*}{$(\beta,\Omega,Initial)$} & \multicolumn{3}{c|}{ProxSSN} & \multicolumn{3}{c|}{ProxGD} & \multicolumn{3}{c|}{ARNT}     \\ \cline{2-10}
     & time   &  obj & iter (siter)  & time & obj & iter     & time & obj & iter (siter)     \\ \hline
\endhead
\hline
\endfoot

 500 / 0.00 / a &  0.17 & 9.38492745 & 2 (46.0) &    10.75 & 9.38492745 & 1000 &   0.33 & 9.38492745 & 6 (17.3)\\ \hline
 500 / 0.10 / a &  0.94 & 9.38492744 & 3 (133.3) &    15.13 & 9.38492746 & 1000 &   2.45 & 9.38492744 & 7 (61.7)\\ \hline
 500 / 0.25 / a &  1.14 & 9.38492744 & 3 (133.3) &    12.65 & 9.38492747 & 1000 &   4.74 & 9.38492744 & 15 (73.3)\\ \hline
 500 / 0.00 / b &  0.74 & 9.38492745 & 3 (133.3) &    14.24 & 9.38492748 & 1000 &   1.21 & 9.38492745 & 7 (49.9)\\ \hline
 500 / 0.10 / b &  0.89 & 9.38492744 & 3 (133.3) &    14.39 & 9.38492746 & 1000 &   2.23 & 9.38492744 & 8 (60.3)\\ \hline
 500 / 0.25 / b &  0.98 & 9.38492744 & 3 (133.3) &    12.29 & 9.38492747 & 1000 &   3.50 & 9.38492744 & 11 (72.3)\\ \hline
 600 / 0.00 / a &  0.20 & 10.60175601 & 3 (95.0) &    11.38 & 10.60175602 & 1000 &   0.36 & 10.60175601 & 6 (17.2)\\ \hline
 600 / 0.10 / a &  1.01 & 10.60175601 & 3 (133.3) &    13.75 & 10.60175604 & 1000 &   2.99 & 10.60175601 & 8 (57.8)\\ \hline
 600 / 0.25 / a &  1.20 & 10.60175601 & 3 (133.3) &    12.75 & 10.60175606 & 1000 &   5.09 & 10.60175601 & 14 (70.3)\\ \hline
 600 / 0.00 / b &  0.99 & 10.60175601 & 3 (133.3) &    16.95 & 10.60175602 & 1000 &   1.75 & 10.60175601 & 6 (54.0)\\ \hline
 600 / 0.10 / b &  1.07 & 10.60175601 & 3 (133.3) &    14.52 & 10.60175604 & 1000 &   3.64 & 10.60175601 & 11 (52.4)\\ \hline
 600 / 0.25 / b &  1.10 & 10.60175601 & 3 (133.3) &    12.24 & 10.60175606 & 1000 &   4.61 & 10.60175601 & 10 (65.0)\\ \hline
 700 / 0.00 / a &  0.26 & 11.75508441 & 3 (95.0) &    11.90 & 11.75508441 & 1000 &   0.42 & 11.75508441 & 6 (15.2)\\ \hline
 700 / 0.10 / a &  1.02 & 11.75508441 & 3 (133.3) &    12.30 & 11.75508444 & 1000 &   4.05 & 11.75508441 & 6 (57.8)\\ \hline
 700 / 0.25 / a &  1.00 & 11.75508441 & 3 (133.3) &    11.52 & 11.75508453 & 1000 &   4.81 & 11.75508441 & 12 (60.5)\\ \hline
 700 / 0.00 / b &  0.87 & 11.75508441 & 3 (133.3) &    16.77 & 11.75508442 & 1000 &   1.78 & 11.75508441 & 6 (54.8)\\ \hline
 700 / 0.10 / b &  0.93 & 11.75508441 & 3 (133.3) &    11.83 & 11.75508444 & 1000 &   4.95 & 11.75508441 & 9 (47.8)\\ \hline
 700 / 0.25 / b &  1.12 & 11.75508441 & 3 (133.3) &    11.51 & 11.75508452 & 1000 &   4.93 & 11.75508441 & 10 (62.2)\\ \hline
 800 / 0.00 / a &  0.20 & 12.85654802 & 3 (95.3) &    11.09 & 12.85654802 & 1000 &   0.47 & 12.85654802 & 6 (15.0)\\ \hline
 800 / 0.10 / a &  1.06 & 12.85654802 & 3 (133.3) &    14.40 & 12.85654804 & 1000 &   3.51 & 12.85654802 & 12 (55.8)\\ \hline
 800 / 0.25 / a &  1.22 & 12.85654801 & 3 (133.3) &    12.85 & 12.85654804 & 1000 &   5.24 & 12.85654801 & 8 (62.9)\\ \hline
 800 / 0.00 / b &  0.77 & 12.85654802 & 3 (133.3) &    16.38 & 12.85654803 & 1000 &   1.71 & 12.85654802 & 6 (53.5)\\ \hline
 800 / 0.10 / b &  1.02 & 12.85654802 & 3 (133.3) &    14.80 & 12.85654804 & 1000 &   4.52 & 12.85654802 & 9 (49.2)\\ \hline
 800 / 0.25 / b &  1.13 & 12.85654801 & 3 (133.3) &    12.58 & 12.85654804 & 1000 &   6.63 & 12.85654801 & 14 (60.1)\\ \hline
 900 / 0.00 / a &  0.19 & 13.91448057 & 3 (91.3) &    10.89 & 13.91448057 & 1000 &   0.57 & 13.91448057 & 6 (16.0)\\ \hline
 900 / 0.10 / a &  1.10 & 13.91448057 & 3 (133.3) &    14.24 & 13.91448058 & 1000 &   5.89 & 13.91448057 & 14 (51.4)\\ \hline
 900 / 0.25 / a &  1.50 & 13.91448056 & 4 (150.0) &    14.54 & 13.91448058 & 1000 &   7.22 & 13.91448056 & 15 (64.2)\\ \hline
 900 / 0.00 / b &  0.53 & 13.91448057 & 2 (100.0) &    17.00 & 13.91448057 & 1000 &   2.21 & 13.91448057 & 6 (50.2)\\ \hline
 900 / 0.10 / b &  1.07 & 13.91448057 & 3 (133.3) &    15.21 & 13.91448058 & 1000 &   7.55 & 13.91448057 & 10 (53.8)\\ \hline
 900 / 0.25 / b &  1.16 & 13.91448056 & 3 (133.3) &    12.41 & 13.91448057 & 1000 &   8.21 & 13.91448056 & 11 (62.5)\\ \hline
 1000 / 0.00 / a &  0.23 & 14.93511997 & 2 (67.0) &    8.41 & 14.93511997 & 1000 &   0.90 & 14.93511997 & 6 (22.7)\\ \hline
 1000 / 0.10 / a &  8.69 & 14.93511995 & 4 (150.0) &    11.08 & 14.93511996 & 1000 &   10.38 & 14.93511995 & 14 (75.7)\\ \hline
 1000 / 0.25 / a &  4.90 & 14.93511986 & 5 (160.0) &    11.39 & 14.93512017 & 1000 &   13.75 & 14.93511986 & 21 (96.2)\\ \hline
 1000 / 0.00 / b &  2.60 & 14.93511997 & 3 (133.3) &    12.94 & 14.93511997 & 1000 &   3.98 & 14.93511997 & 10 (76.3)\\ \hline
 1000 / 0.10 / b &  9.34 & 14.93511995 & 4 (150.0) &    11.92 & 14.93511996 & 1000 &   12.44 & 14.93511995 & 13 (77.9)\\ \hline
 1000 / 0.25 / b &  2.43 & 14.93511986 & 5 (160.0) &    11.99 & 14.93512015 & 1000 &   17.62 & 14.93511986 & 18 (93.4)\\ \hline

\end{longtable}
\end{scriptsize}

\section{Conclusion}\label{sec:con}
This paper introduces a new concept of strong prox-regularity and validates it over many existing interesting applications, \revise{including composite optimization problems with weakly convex regularizer, smooth optimization problems on manifold, and several composite optimization problems on manifold.} Then a projected semismooth Newton method is proposed for solving a class of nonconvex optimization problems equipped with strong prox-regularity. The idea is to utilize the locally single-valued, Lipschitz continuous, and monotone properties of the residual mapping. The global convergence and local superlinear convergence results of the proposed algorithm are presented under standard conditions. Numerical results have convincingly demonstrated the effectiveness of our proposed method in various nonconvex composite problems, including the sparse PCA problem, the nonnegative PCA problem, the sparse least square regression, and the BEC problem.   

\section*{Acknowledgements} The authors are grateful to Prof.~Anthony Man-Cho So for his valuable comments and suggestions.

\bibliographystyle{siamplain}
\bibliography{ref}
\end{document}